\newcommand{\R}{\mathbb{R}}
\newcommand{\N}{\mathbb{N}}
\newcommand{\E}{\mathbb{E}}
\newcommand{\dd}{\text{d}}
\newtheorem{assumption}{Assumption}[section]
\journalname{}
\begin{document}

\title{Weak convergence rates for an explicit full-discretization of stochastic {A}llen-{C}ahn equation with additive noise\thanks{This work was supported by NSF of China (Nos. 11971488, 12071488, 11671405, 11571373, 91630312),
NSF of Hunan Province (2020JJ2040, 2018JJ3628),
                Program of Shenghua Yuying at Central South University and the Fundamental Research Funds
               for the Central Universities of Central South University (Nos. 2017zzts318, 2019zzts214).
}}


\author{Meng Cai         \and
        Siqing Gan      \and
        Xiaojie Wang   
}


\institute{ Meng Cai \at
              \email{csumathcai@csu.edu.cn}           
           \and
           Siqing Gan \at
               \email{sqgan@csu.edu.cn}
           \and
           \Envelope \,
           Xiaojie Wang \at
           \email{x.j.wang7@csu.edu.cn; x.j.wang7@gmail.com}
           \and
           School of Mathematics and Statistics, Central South University, Changsha, China
}

\date{Received: date / Accepted: date}

\maketitle

\begin{abstract}
 We discretize the stochastic Allen-Cahn equation  with additive noise by means of a spectral Galerkin method in space
         and a tamed version of the exponential Euler method in time.
         The resulting error bounds are analyzed for the spatio-temporal full discretization in both strong and weak senses.
         Different from existing works, we develop a new and direct approach for the weak error analysis, which does not rely on
         the use of the associated Kolmogorov equation or It\^{o}'s formula and is therefore non-Markovian in nature.
         Such an approach thus has a potential to be applied to non-Markovian equations such as
         stochastic Volterra equations or other types of fractional SPDEs,
         which suffer from the lack of Kolmogorov equations.
          It turns out that the obtained weak convergence rates are, in both spatial and temporal direction, essentially twice as high as the strong convergence rates.
          Also, it is revealed how the weak convergence rates depend on the regularity of the noise.
          Numerical experiments are finally reported to confirm the theoretical conclusion.
\keywords{Stochastic Allen-Cahn equation
\and One-sided Lipschitz condition
\and Malliavin calculus
\and Strong and weak convergence rates
\and Spectral Galerkin method
\and Tamed exponential Euler method}
\subclass{60H35 \and 60H15 \and 65C30}
\end{abstract}

\section{Introduction}\label{sec:introduction}
Over the past decades, the numerical analysis of stochastic partial differential equations (SPDEs) has attracted increasing attention
(see e.g., \cite{kruse2014strong,lord2014introduction} and references therein).
In these two recent monographs, the analysis always relies on the globally Lipschitz condition imposed on the nonlinearities.
Nevertheless,
most models encountered in practice fail to satisfy such a restrictive condition, which motivates the development of numerical SPDEs in the non-globally Lipschitz regime.
Although there have been a few works on numerical stochastic Allen-Cahn equation, the typical example
of parabolic SPDEs with non-globally Lipschitz nonlinearity, e.g.,
\cite{becker2017strong,becker2019strong,brehier2019strong,brehier2019analysis,
brehier2018weak,campbell2018adaptive,Cui2019weak,feng2017finite,jentzen2015strong,
kovacs2015discretisation,liu2018strong-multiplicative,liu2018strong,
QW2018FEM,wang2018efficient},
it is still far from being well-understood.
The present article aims to make further contributions to the weak error analysis for a spatio-temporal full discretization
of stochastic Allen--Cahn equation driven by additive noise.

Given  $T \in (0,\infty)$,
let $-A$  be Dirichlet Laplacian
and
$F$ be a Nemytskii operator associated with a cubic polynomial that
violates the globally Lipschitz condition.
Moreover, we let $\{W(t)\}_{t\in[0,T]}$ be a standard (possibly cylindrical) $Q$-Wiener process in a separable Hilbert space $H$
on the stochastic basis
$
\big(
\Omega,
\mathcal{F},
\mathbb{P},
\{\mathcal{F}_t\}_{t\in[0,T]}
\big).
$
Throughout this article we consider the stochastic Allen--Cahn equation, given by
\begin{equation}\label{eq:1.1}
\begin{split}
\left\{
    \begin{array}{lll} \dd X(t) + A X(t)\, \dd t = F ( X(t) ) \,\dd t +  \dd W(t), \quad  t \in (0, T], \\
     X(0) = X_0.
    \end{array}\right.
\end{split}
\end{equation}
Under certain assumptions, it is known that \eqref{eq:1.1}  has a unique solution defined by
\begin{align}
X(t)
=
E(t)X_0
+
\int_0^t E(t-s)F(X(s))\,\mathrm{d} s
+
\int_0^t E(t-s) \, \mathrm{d} W(s),
\quad
t \in [0, T],
\end{align}
where $E(t)=e^{-tA},~t \geq 0$ is an analytic semigroup on $H$ generated by $-A$
and  the stochastic integral
$\int_0^t E(t-s) \, \mathrm{d} W(s)$ is precisely defined in subsection 2.1.

As indicated in \cite{beccari2019strong}, the fully discrete exponential Euler and the fully discrete linear-implicit Euler approximations diverge strongly and numerically weakly when used to solve the stochastic Allen--Cahn equations.
In the existing literature, the backward Euler \cite{kovacs2015backward,kovacs2015discretisation,
liu2018strong-multiplicative,liu2018strong,
QW2018FEM} and modified Euler-type time-stepping schemes \cite{becker2017strong,brehier2019strong,brehier2019analysis,
campbell2018adaptive,gyongy2016convergence,wang2018efficient}
are introduced to produce convergent approximations for such SPDEs.
In this article, a tamed exponential Euler time discretization is proposed based on the spectral Galerkin  spatial semi-discretization.
For $N,M\in\N, \N=\{1,2,\cdots\}$,  by $X^{M,N}_t$ we denote the full-discrete approximations of $X(t)$,
produced by the proposed fully discrete scheme,
\begin{equation}
\label{eq:intro-full-discrete-scheme}
X_{t_{m+1}}^{M,N}=E_N (\tau)X_{t_m}^{M,N} +\tfrac{\tau E_N (\tau)P_N F (X_{t_m}^{M,N})}
{1+\tau \| P_N F (X_{t_m}^{M,N}) \|}
+E_N (\tau)P_N \Delta W_m,
\end{equation}
where $\tfrac 1N$ and $\tau := \tfrac TM$  represent, respectively,
the uniform space and time step sizes. The goal of this work is then to analyze the weak error estimates.
More specifically, the main result, Theorem \ref{theo:full discretization}, shows that
for $\Phi\in C_b^2(H,\R)$
and arbitrarily small $\epsilon > 0$,
\begin{equation}
\big|\E [\Phi(X(T)) ]
-\E [\Phi(X_{T}^{M,N}) ]\big|
\leq
C \left(\lambda_N^{-\gamma+\epsilon}+ \tau^{\gamma-\epsilon}
\right),
\quad
\gamma \in (0,1],
\end{equation}
where $\gamma$ from Assumption \ref{assum:eq-noise} is a parameter used to measure the spatial regularity of the noise process
and $\lambda_N$ is the $N$-th eigenvalue of the linear operator $A$.
As a by product of the weak error analysis, we also obtain strong convergence rates as
\begin{equation}
\sup_{t \in [0,T]}
\|X(t)-X^{M,N}_t\|_{L^2(\Omega,H)}
\leq
C \big(
\lambda_N^{-\tfrac{\gamma}{2}}
+ \tau^{\tfrac{\gamma}{2}}
\big),
\quad
\gamma \in (0,1].
\end{equation}
The weak error, sometimes more relevant in various fields such as financial engineering, concerns with the approximation
of the law of the solution
and has been extensively studied by pioneering works \cite{talay1990expansion,bally1995euler,bally1996law,Clement2006duality,Kohatsu2001weak,
Kohatsu2002variance,szepessy2001adaptive} for discretization schemes of finite-dimensional stochastic differential equations (SDEs).
%
In recent years, much progress has been made in weak approximation of SPDEs in a globally Lipschitz setting, see
\cite{Andersson2016weak, Andersson2016weakSPDEAC, brehier2018kolmogorov, Conus2014weak,Debussche2009weak,
Debussche2011weak,harms2019weak, Jentzen2015weak,kovacs2012weak,kovacs2013weak,
wang2016weakDCDS,WanGan2013weak}.
By contrast, the study of weak approximations of SPDEs with non-globally Lipschitz coefficients is still at an early stage.
Existing publications include \cite{brehier2018weak}, \cite{Cui2019weak} and \cite{Cui2018ergodicity}, where
the authors analyzed the weak error of temporal semi-discretization splitting schemes, spatial semi-discretization finite element method
and implicit full-discretization of the stochastic Allen--Cahn equations, respectively.
All of these three works employed a splitting strategy  to construct a nice auxiliary continuous-time process with appropriate spatio-temporal
regularity properties and deduce the weak convergence rate from the regularity of the regularized Kolmogorov equation.

In this paper, however, we develop a different and more direct approach for the weak error analysis,
which does not rely on the use of the regularized Kolmogorov equation or It\^{o}'s formula.
Such an approach thus has a potential to be applied to non-Markovian equations such as
nonlinear stochastic Volterra equations or other types of fractional SPDEs
\cite{Andersson2016weakJMAA,gunzburger2019sharp,Jin2019numerical},
which suffer from the lack of Kolmogorov equations.
It is worthwhile to point out that the approach here is different from \cite{Andersson2016weakSPDEAC},
where the authors used duality in refined Sobolev-Malliavin spaces and worked with globally Lipschitz nonlinearity.
Also, we would like to mention several seminal papers \cite{bally1995euler,bally1996law,Clement2006duality,Kohatsu2001weak, Kohatsu2002variance,szepessy2001adaptive} on the weak error analysis of numerical methods for finite-dimensional SDEs,
where the Malliavin calculus plays a key role in the analysis.
Finally, we highlight that the proposed fully discrete scheme with explicit time-stepping
is more computationally efficient than the  nonlinearity-implicit time-stepping
in \cite{Cui2018ergodicity}, which is the first paper to analyze weak error of a fully discrete scheme for the stochastic Allen--Cahn equations.


We now briefly explain the new approach for the weak convergence analysis.
After introducing the spectral Galerkin spatial semi-discretization $X^N(t)$, we separate the error into two parts,
\begin{equation}
\begin{split}
& \E\big[ \Phi(X(T))\big]
-\E \big[\Phi(X_T^{M,N})\big]
\\
&
\quad
=
\big(
\E\big[\Phi(X(T))\big]
-\E \big[\Phi(X^N(T))\big]
\big)
+
\big(
\E\big[\Phi(X^N(T))\big]
-\E \big[\Phi(X_T^{M,N})\big]
\big),
\end{split}
\end{equation}
where the first term corresponds to the spatial error and the second one corresponds to the temporal error.
We define two auxiliary processes as
$\bar{X}(t):=X(t)-\mathcal{O}_t$
and
$\bar{X}^N(t):=X^N(t)-\mathcal{O}_t^N$, where
$\mathcal O_t=\int_0^t E(t-r)\dd W(r)$
and
$\mathcal O^N_t : = P_N \mathcal O_t
$,
and separate the spatial error into two parts:
\begin{equation} \label{eq:intro-spatial-error-decomposition}
\begin{split}
\E\big[\Phi(X(T))\big]
-\E \big[\Phi(X^N(T))\big]
& =
\big(\E\big[\Phi(\bar{X}(T)+\mathcal{O}_T)\big]
-\E \big[\Phi(\bar{X}^N(T)+\mathcal{O}_T)\big]\big) \\
& \quad +
\big(\E\big[\Phi(\bar{X}^N(T)+\mathcal{O}_T)\big]
-\E \big[\Phi(\bar{X}^N(T)+\mathcal{O}_T^N)\big]\big).
\end{split}
\end{equation}
To estimate the first item, it suffices to measure the discrepancy between $\bar{X}^N(T)$ and $\bar{X}(T)$ in the strong sense,
\begin{align}
\begin{split}
&
\big|
\E\big[\Phi(\bar{X}(T)+\mathcal{O}_T)\big]
-\E \big[\Phi(\bar{X}^N(T)+\mathcal{O}_T)\big]
\big|
\\
& \quad \leq C\Big|
\E\int_0^1 \Phi'\big(X(T)+s (\bar{X}^N(T)-\bar{X}(T))\big)\big(\bar{X}^N(T)
-\bar{X}(T)\big)
\dd s \Big|
\\&
\quad \leq
C\,
\| \bar{X}^N(T)-P_N \bar{X}(T)\|_{L^2(\Omega,H)}
+
C\,
\| P_N \bar{X}(T)- \bar{X}(T) \|_{L^2(\Omega,H)}
.
\end{split}
\end{align}
Since the process $\bar{X}(T)$, getting rid of the stochastic convolution, admits higher spatial regularity,
one can follow standard arguments to arrive at the desired rates.
Concerning the remaining term in \eqref{eq:intro-spatial-error-decomposition}, we use the Taylor expansion to get
\begin{align}
\begin{split}
& \big|
\E\big[\Phi(\bar{X}^N(T)+\mathcal{O}_T)\big]
-\E \big[\Phi(\bar{X}^N(T)+\mathcal{O}_T^N)\big]
\big|
\leq
\Big| \E \big[ \Phi^{'} (X^N(T))(\mathcal{O}_T-\mathcal{O}_T^N) \big] \Big|
\\ &  \quad +
\Big |
\E \Big [ \int_0^1 \Phi^{''}(X^N(T)+\lambda(\mathcal{O}_T-\mathcal{O}_T^N))
(\mathcal{O}_T-\mathcal{O}_T^N,\mathcal{O}_T-\mathcal{O}_T^N)
(1-\lambda)\dd \lambda
\Big]
\Big|
.
\end{split}
\end{align}
The first term needs to be treated carefully and the key ingredient is the Malliavin integration by parts formula.
As $\Phi\in C_b^2(H,\R)$,  the second term clearly contributes to rates twice as high as the strong convergence rates.
 %

As a by product of the weak error analysis, one can easily obtain the rate of the strong error,
$
\|X(t)-X^N(t)\|_{L^2(\Omega,H)}
\leq
\|\bar{X}(t)-\bar{X}^N(t)\|_{L^2(\Omega,H)}
+
\|\mathcal{O}_t-\mathcal{O}^N_t\|_{L^2(\Omega,H)},
$
which is half of the weak error, due to the presence of the second error.
%
In a similar manner,
%
we introduce an auxiliary process
$\bar{X}_T^{M,N}:=X_T^{M,N}-\mathcal{O}_T^{M,N}$,
where
$\mathcal{O}_T^{M,N}
:=\int_0^T E_N (T-\lfloor {s} \rfloor_{\tau})P_N \dd W(s)$
with $\lfloor {t} \rfloor_{\tau}:=t_i$
for
$t \in [t_i ,t_{i+1})$.
Therefore, the temporal error is split into two terms:
\begin{equation}
\begin{split}
\E\big[\Phi(X^N(T))\big]
-\E \big[\Phi(X_T^{M,N})\big]
&=
\big(\E\big[\Phi(\bar{X}^N(T)+\mathcal{O}_T^N)\big]
-\E \big[\Phi(\bar{X}^N(T)+\mathcal{O}_T^{M,N})\big]\big) \\
& \quad +
\big(\E\big[\Phi(\bar{X}^N(T)+\mathcal{O}_T^{M,N})\big]
-\E \big[\Phi(\bar{X}_T^{M,N}+\mathcal{O}_T^{M,N})\big]\big).
\end{split}
\end{equation}
In order to handle these two terms, we essentially follow the basic lines of the weak error analysis in the spatial case.
However, the analysis here is much more complicated with the emergence of new challenges and difficulties
(see Sect.~\ref{sec:full discretization}).
For example, $\nicefrac 12$ is put as an ultimate limit on the order of  the H\"{o}lder regularity in time of $X_t^{M,N}$
for any $\gamma \in (0, 1]$ and this causes essential difficulties in the weak error analysis for the case $\gamma \in (\nicefrac12 , 1]$,
where $\gamma$  is linked to the spatial regularity of the noise process (see Assumption \ref{assum:eq-noise}).
%
To overcome it, we repeatedly use the Taylor expansion and properties of stochastic integrals (cf. \eqref{eq:K_24-1}--\eqref{eq:K_24-2})
to finally obtain the expected weak rates (see estimates of $K_2$ in \eqref{full-weak-separation}).

The article is organized as follows. In the next section we present some preliminaries and give a brief introduction to the Malliavin calculus.
In Sect.~\ref{sect:Galerkin_parabolic}, we prove strong and  weak convergence rates for the  spectral Galerkin spatial approximation.
A priori moment bounds of full discretization and convergence analysis in both strong and weak senses are given in Sect.~\ref{sec:full discretization}.
Finally, Sect.~\ref{sec;Numerical experiments} provides numerical experiments to confirm the theoretical findings.

%
%
\section{Preliminaries}\label{sec:preliminaries}

Let $(H, \langle \cdot, \cdot \rangle, \|\cdot\| )$ and $(U, \langle \cdot, \cdot \rangle_U, \|\cdot\|_U )$ be
the real separable Hilbert spaces.
Let $\mathcal{L}(U,H)$ be the space of all bounded linear operators from
$U$ to $H$ endowed with the usual operator norm $\| \cdot \|_{\mathcal{L}(U,H)}$
and by $\mathcal{L}_2(U,H) \subset \mathcal{L}(U,H)$ we denote the space consisting of all Hilbert-Schmidt operators from $U$ to $H$. For short, we write $\mathcal{L}(H) $ and $\mathcal{L}_2(H)$ instead of $\mathcal{L}(H,H)$ and $\mathcal{L}_2(H,H)$, respectively.
It is well-known that $\mathcal{L}_2 (U,H)$ is a Hilbert space equipped with the scalar product and norm,
\begin{align}
\left<\Gamma_1,\Gamma_2\right>_{\mathcal{L}_2(U,H)}
=
\sum_{i\in\N}\left<\Gamma_1\phi_i,\Gamma_2\phi_i\right>,
\;
\| \Gamma \|_{\mathcal{L}_2(U,H)}
=\Big(
\sum_{i\in\N}\| \Gamma \phi_i\|^2\Big)^{\tfrac12},
\end{align}
which are both independent of the choice of orthonormal basis $\{\phi_i\}$ of $U$.
If $ \Gamma \in \mathcal{L}_2(U,H)$ and
$L\in \mathcal{L}(H,U)$, then $ \Gamma L\in \mathcal{L}_2(H)$ and $ L \Gamma \in \mathcal{L}_2(U)$. Furthermore,
\begin{equation}
\| \Gamma L\|_{\mathcal{L}_2(H)}
\leq
\| \Gamma \|_{\mathcal{L}_2(U,H)}\|L\|_{\mathcal{L}(H,U)}
,
\;
 \|L \Gamma \|_{\mathcal{L}_2(U)}
 \leq
 \| \Gamma \|_{\mathcal{L}_2(U,H)}\|L\|_{\mathcal{L}(H,U)}.
\end{equation}
Also if  $\Gamma_1, \Gamma_2 \in \mathcal{L}_2(U,H)$, then
\begin{equation}
| \langle \Gamma_1, \Gamma_2  \rangle_{\mathcal{L}_2(U,H)} |
\leq
\| \Gamma_1 \|_{\mathcal{L}_2(U,H)}
\| \Gamma_2  \|_{\mathcal{L}_2(U,H)}.
\end{equation}
Let $\mathcal{I}:=(0,1)$ and let $L^{ r } ( \mathcal{I},\R ), r \geq 1$ be the Banach space consisting of $r$-times integrable functions.
Particularly, taking $r=2$, $H:= L^2 ( \mathcal{I}, \R )$ denotes the real separable Hilbert space endowed with usual inner product $\langle \cdot , \cdot \rangle$ and norm $\|\cdot\|=\left<\cdot,\cdot\right>^{\nicefrac12}$.
For convenience, the notation $L^{ r } ( \mathcal{I} )$
(or  $L^{ r }$) is frequently used.
By $C_b^2(H,\R)$ we denote
the space of not necessarily bounded mappings from $H$ to $\R$ that have continuous
and bounded Fr\'{e}chet derivatives up to order 2.
Finally, $V:=C(\mathcal{I},\R)$ represents the  Banach space of all continuous functions from $\mathcal{I}$ to $\R$
endowed with supremum norm.
%
%
%
%
%
%
%
%
%
\subsection{Main assumptions and the well-posedness of the model}
In this article, we restrict ourselves to an abstract stochastic evolution equation in the Hilbert space $H$, driven by additive noise,
described by
\begin{equation}\label{eq:SGL-abstract}
\begin{split}
\left\{
    \begin{array}{lll} \dd X(t) + A X(t)\, \dd t = F ( X(t) ) \,\dd t +  \dd W(t), \quad  t \in (0, T], \\
     X(0) = X_0.
    \end{array}\right.
\end{split}
\end{equation}
 To get started, main assumptions
 are formulated in this subsection. Throughout this paper,  by $C$ we denote a generic positive constant that is independent
 of the discretization parameters and that possibly differs at different occurrences.
\begin{assumption}
\label{ass:A-condition}
Let $-A \colon \text{Dom}(A) \subset H \rightarrow H$ be the Laplacian with homogeneous Dirichlet boundary conditions, defined by
$-A u=\Delta u$,
$u \in \text{Dom}(A):=H^2 \cap H_0^1$.
\end{assumption}
Under Assumption \ref{ass:A-condition}
there exists a family of eigenpairs $\{\lambda_k, e_k\}_{k\in\mathbb{N}}$ such that
\begin{align}
A e_k =\lambda_k e_k,
\quad
e_k(x) =\sqrt{2} \sin(k \pi x)
\quad \text{and} \quad
\lambda_k =  \pi^2 k^2.
\end{align}
Moreover, $-A$ generates an analytic semigroup $E(t)$ on $H$.
We define the fractional powers of $A$,
namely, $A^\alpha$ for $\alpha \in \R$, by means of the spectral decomposition of A \cite[Appendix B.2]{kruse2014strong}.
Furthermore, the interpolation spaces denoted by $\dot{H}^{\alpha} :=\text{Dom}(A^{\tfrac{\alpha}{2}})$ are separable Hilbert spaces equipped with inner product
$\langle \cdot, \cdot \rangle_\alpha : = \langle A^{\tfrac{\alpha}{2}} \cdot,
A^{\tfrac{\alpha}{2}} \cdot \rangle$
and norm $\| \cdot \|_{\alpha} =\|A^{\tfrac{\alpha}{2}} \cdot \|= \langle \cdot, \cdot \rangle_\alpha^{\nicefrac12} $.
Regularity properties of the semigroup are stated as follows:
\begin{equation}\label{regularity of semigroup}
\| A^\alpha E(t)\|_{\mathcal{L}(H)} \leq C \, t^{-\alpha}, ~~
\| A^{-\beta} (I-E(t))\|_{\mathcal{L}(H)} \leq C \, t^{\beta}, \quad t >0, \alpha \geq 0, \beta \in [0,1].
\end{equation}

\begin{assumption}
\label{assum:Nonlinearity}
Let $F:L^6(\mathcal{I},\mathbb{R})\rightarrow H$ be a Nemytskii operator defined by
\begin{align} \label{eq:F-f-Defn}
F(v)(x)=f(v(x))=v(x)-v^3(x),\;x\in \mathcal{I},\;v\in L^6(\mathcal{I},\mathbb{R}).
\end{align}
\end{assumption}
Furthermore, we denote, for
$v, \zeta,  \zeta_1, \zeta_2 \in L^6(\mathcal{I},\mathbb{R}),$
\begin{equation}
\begin{split}
\big ( F'(v) (\zeta) \big) (x)
& =
f'(v(x)) \zeta ( x )
=
(
1 - 3 v^2 ( x )
)
\zeta ( x ),
\quad
x\in \mathcal{I},
 \\
 \big( F''(v) ( \zeta_1, \zeta_2 ) \big ) (x)
 & =
 f''(v(x))  \zeta_1 ( x ) \zeta_2 ( x )
 =
 -6v(x) \zeta_1 ( x ) \zeta_2 ( x ),
 \quad
 x\in \mathcal{I}.
\end{split}
\end{equation}
It is easy to check that
\begin{equation}\label{eq:F-one-sided-condition}
\langle u,  F'(v)u \rangle  \leq  \| u \|^2, \quad u , v \in L^6(\mathcal{I},\mathbb{R}),
\end{equation}
\begin{equation}\label{eq:F'-condition}
\|F'(v)u\| \leq C \big( 1+ \| v \|_V^2 \big)\|u\|,
\quad v \in V , u \in L^6(\mathcal{I},\mathbb{R}),
\end{equation}
\begin{equation}
\| F (u) - F (v) \|  \leq
C ( 1 + \| u \|_V^2 +  \| v \|_V^2 ) \| u - v \|,
\quad u, v \in V, 
\end{equation}
\begin{equation}\label{eq:F''}
\| F''( \zeta ) ( u, v) \|_{-1}  \leq C \| \zeta \|_V \|u\| \|v\|,
\quad \zeta \in V, u, v \in L^6(\mathcal{I},\mathbb{R}).
\end{equation}

\begin{assumption}
\label{assum:eq-noise}
Let $\{W(t)\}_{t\in[0,T]}$ be a standard H-valued (possibly cylindrical) $Q$-Wiener process  on the stochastic basis
$\big(\Omega,\mathcal{F},\mathbb{P},\{\mathcal{F}_t\}_{t\in[0,T]}\big)$, with the covariance operator $Q$ satisfying
\begin{align}
\label{eq:A-Q-condition}
\big \| A^{\tfrac{\gamma-1}2 } \big \|_{\mathcal{L}_2^0}
=
\big \| A^{\tfrac{\gamma-1}2 }Q^{\tfrac12} \big \|_{\mathcal{L}_2(H)}
<\infty,
\; \text{ for some}
\; \gamma\in(0,1].
\end{align}
In the case $\gamma \leq \tfrac12$, we in addition assume that {Q} commutes with {A}.
\end{assumption}
Here by $\mathcal{L}_2^0:=\mathcal{L}_2(U_0,H) $ we denote the space of Hilbert-Schmidt operators from
the Hilbert space $U_0=Q^{\nicefrac12}(H)$ to $H$.
To simplify the notation, we write
\begin{equation}
\mathcal{O}_t:=\int_0^t E(t-s) \dd W(s).
\end{equation}
A slight modification of the proof in \cite[Theorem 5.25]{da2014stochastic} derives that for any $p \geq 2$,
\begin{align}\label{regularity;O_s}
\sup_{t\in[0,T]}\E\big[\|\mathcal{O}_t\|_V^p \big]
+
\sup_{t\in[0,T]}
\E \big[ \|\mathcal{O}_t\|_\gamma^p \big]
<\infty.
\end{align}
Furthermore, for any $\alpha\in[0,\gamma]$ and $0\leq s<t\leq T$,
\begin{align}
\|\mathcal{O}_t-\mathcal{O}_s\|_{L^{p} ( \Omega, \dot{H}^{\alpha} ) }
\leq
C (t-s)^{\tfrac{\gamma-\alpha}2}.
\end{align}

\begin{assumption}\label{ass:X0}
The initial value $X_0$ is considered to be
deterministic and for $\gamma \in (0,1]$ from \eqref{eq:A-Q-condition} it holds
\begin{equation}
\| X_0 \|_{ \gamma } + \| X_0 \|_{ V } < \infty.
\end{equation}
\end{assumption}

The above assumptions are sufficient to establish well-posedness and spatio-temporal regularity properties of
\eqref{eq:SGL-abstract}  \cite{Cerrai2001,brehier2019strong,QW2018FEM}.
Here we just state the main results as follows.
\begin{theorem}\label{them:regulairty-mild-solution}
Under Assumptions \ref{ass:A-condition}-\ref{ass:X0}, there is a unique mild solution of \eqref{eq:SGL-abstract} given by
\begin{align}
X(t)
=
E(t)X_0
+
\int_0^t E(t-s)F(X(s))\,\mathrm{d} s
+
\int_0^t E(t-s) \, \mathrm{d} W(s),
\quad
t \in [0, T].
\end{align}
Moreover, for any $ p \geq 2 $,
\begin{equation}\label{eq:thm-wellposed-V}
\sup_{t\in[0,T]}
\|X(t)\|_{L^{p}(\Omega, V)}
<
\infty,
\quad
\text{and}
\quad
\sup_{t\in[0,T]}
\|X(t)\|_{L^{p} ( \Omega, \dot{H}^{\gamma} ) }
<
\infty.
\end{equation}
Additionally, for any $\alpha\in[0,\gamma]$ and $0\leq s<t\leq T$,
\begin{align}
\sup_{0\leq s <t \leq T}
\frac{\| X(t) - X(s) \|_{L^{p} ( \Omega, \dot{H}^{\alpha} ) }}
{(t-s)^{ (\gamma-\alpha )/2}}
<
\infty.
\end{align}
\end{theorem}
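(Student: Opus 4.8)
The plan is to remove the stochastic convolution and reduce \eqref{eq:SGL-abstract} to a pathwise semilinear parabolic equation with a super-linear but dissipative nonlinearity, along the lines of \cite{Cerrai2001,QW2018FEM,brehier2019strong}. Set $\bar{X}(t):=X(t)-\mathcal{O}_t$; then $X$ is a mild solution of \eqref{eq:SGL-abstract} if and only if $\bar{X}$ solves $\bar{X}(t)=E(t)X_0+\int_0^t E(t-s)F(\bar{X}(s)+\mathcal{O}_s)\,\dd s$ on $[0,T]$. By \eqref{regularity;O_s} the paths $s\mapsto\mathcal{O}_s$ lie in $C([0,T],V)$ almost surely, so for $\mathbb{P}$-a.e.\ $\omega$ this is a deterministic integral equation; I would first establish local existence and uniqueness in $C([0,T_0],V)$ for some $T_0=T_0(\omega)>0$ by a Banach fixed-point argument, using the one-dimensional heat-semigroup smoothing $\|E(r)\|_{\mathcal{L}(H,V)}\leq C r^{-1/4}$ (integrable near $0$) and the local Lipschitz bound for $F\colon V\to H$ contained in Assumption \ref{assum:Nonlinearity}.

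To continue the solution to all of $[0,T]$ and to prove the $V$-moment bound in \eqref{eq:thm-wellposed-V}, I would derive pathwise a priori estimates that use the pointwise bound $f'(v)=1-3v^2\leq1$ underlying \eqref{eq:F-one-sided-condition}. Testing the equation for $\bar{X}$ against $|\bar{X}|^4\bar{X}$ gives $\langle F(\bar{X}+\mathcal{O})-F(\mathcal{O}),|\bar{X}|^4\bar{X}\rangle\leq\|\bar{X}\|_{L^6}^6$, and controlling the remaining term $\langle F(\mathcal{O}),|\bar{X}|^4\bar{X}\rangle$ by H\"older's and Young's inequalities against $\|\mathcal{O}_t\|_V$ leads to $\tfrac{\dd}{\dd t}\|\bar{X}(t)\|_{L^6}^6\leq C\|\bar{X}(t)\|_{L^6}^6+C(1+\|\mathcal{O}_t\|_V^{18})$; by Gr\"onwall $\sup_{t\in[0,T]}\|\bar{X}(t)\|_{L^6}$ is then dominated pathwise by $\|X_0\|_{L^6}$ and $\sup_{t\in[0,T]}\|\mathcal{O}_t\|_V$, hence has finite moments of all orders by \eqref{regularity;O_s}. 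Feeding this into the mild formula --- now $F(X(s))\in H$ with $\|F(X(s))\|\leq C(1+\|X(s)\|_{L^6}^3)$ --- and using $\|E(r)\|_{\mathcal{L}(H,V)}\leq Cr^{-1/4}$ together with $\|E(t)X_0\|_V\leq\|X_0\|_V$ (Assumption \ref{ass:X0}) yields a bound on $\|\bar{X}(t)\|_V$ that is uniform in $t$ and has all moments finite; since $\|X(t)\|_V\leq\|\bar{X}(t)\|_V+\|\mathcal{O}_t\|_V$, this and \eqref{regularity;O_s} prove the first bound in \eqref{eq:thm-wellposed-V}. I expect this passage to $L^\infty$-in-space control of the super-linear nonlinearity to be the main obstacle: because $F$ is not globally Lipschitz one cannot simply contract in $V$ from the outset, and must first gain integrability by an energy estimate and then upgrade it via parabolic smoothing (or, alternatively, work directly in spaces of continuous functions as in \cite{Cerrai2001}).

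Granted the $V$-bound, the $\dot{H}^\gamma$-estimate follows by applying $A^{\gamma/2}$ to $\bar{X}(t)=E(t)X_0+\int_0^t E(t-s)F(X(s))\,\dd s$: the first term is bounded by $\|X_0\|_\gamma$ (finite by Assumption \ref{ass:X0}) and the second by $\int_0^t\|A^{\gamma/2}E(t-s)\|_{\mathcal{L}(H)}\|F(X(s))\|\,\dd s\leq C\int_0^t(t-s)^{-\gamma/2}\,\dd s\,\sup_s\|F(X(s))\|_{L^p(\Omega)}$, which is finite since $\gamma/2<1$ and $\|F(X(s))\|_{L^p(\Omega)}\leq C(1+\sup_s\|X(s)\|_{L^{3p}(\Omega,V)}^3)<\infty$ by the previous step; adding $\|\mathcal{O}_t\|_\gamma$, bounded in $L^p(\Omega)$ by \eqref{regularity;O_s}, gives the second bound in \eqref{eq:thm-wellposed-V}.

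Finally, for the H\"older regularity I would use the decomposition $X(t)-X(s)=(E(t-s)-I)X(s)+\int_s^t E(t-r)F(X(r))\,\dd r+\int_s^t E(t-r)\,\dd W(r)$. In $\dot{H}^\alpha$ the first term is estimated by $\|A^{-(\gamma-\alpha)/2}(I-E(t-s))\|_{\mathcal{L}(H)}\|X(s)\|_\gamma\leq C(t-s)^{(\gamma-\alpha)/2}\|X(s)\|_\gamma$ via \eqref{regularity of semigroup} (legitimate since $(\gamma-\alpha)/2\in[0,\nicefrac12]$); the drift term by $C\int_s^t(t-r)^{-\alpha/2}\,\dd r\,\sup_r\|F(X(r))\|\leq C(t-s)^{1-\alpha/2}\sup_r\|F(X(r))\|\leq C\,T^{1-\gamma/2}(t-s)^{(\gamma-\alpha)/2}\sup_r\|F(X(r))\|$; and the stochastic term, after writing $\int_s^t E(t-r)\,\dd W(r)=(\mathcal{O}_t-\mathcal{O}_s)+(I-E(t-s))\mathcal{O}_s$, by the displayed estimate $\|\mathcal{O}_t-\mathcal{O}_s\|_{L^p(\Omega,\dot{H}^\alpha)}\leq C(t-s)^{(\gamma-\alpha)/2}$ following \eqref{regularity;O_s} together with \eqref{regularity of semigroup} once more. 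Taking $L^p(\Omega)$-norms (Minkowski's integral inequality for the drift term) and summing the three bounds yields $\sup_{0\leq s<t\leq T}(t-s)^{-(\gamma-\alpha)/2}\|X(t)-X(s)\|_{L^p(\Omega,\dot{H}^\alpha)}<\infty$.
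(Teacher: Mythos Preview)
Your argument is correct and follows the standard route of \cite{Cerrai2001,brehier2019strong,QW2018FEM}: subtract the stochastic convolution, treat the resulting equation pathwise, obtain an a priori $L^6$-bound via the one-sided Lipschitz property of $f$, bootstrap to $V$ by parabolic smoothing, and then read off the $\dot{H}^\gamma$- and H\"older estimates from the mild formula together with \eqref{regularity of semigroup} and \eqref{regularity;O_s}. Each of the steps you outline is sound; in particular the identity $\int_s^t E(t-r)\,\dd W(r)=(\mathcal{O}_t-\mathcal{O}_s)+(I-E(t-s))\mathcal{O}_s$ and the exponent bookkeeping in the H\"older step are accurate.

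The paper itself does not give a proof of this theorem: it explicitly states the result ``as follows'' and refers to \cite{Cerrai2001,brehier2019strong,QW2018FEM} for the well-posedness and regularity. So there is no ``paper's own proof'' to compare against beyond those references, and your proposal is precisely the argument one finds there (in particular the $L^p$-energy estimate plus smoothing is the approach of \cite{QW2018FEM}, while the direct treatment in spaces of continuous functions you mention as an alternative is that of \cite{Cerrai2001}). One small point worth making explicit when you write it out: the testing of the $\bar{X}$-equation against $|\bar{X}|^4\bar{X}$ is formal and should be justified via Galerkin approximation (or by working with the mild formulation), and global uniqueness follows from the one-sided Lipschitz condition \eqref{eq:F-one-sided-condition} by a standard Gr\"onwall argument on $\|\bar{X}_1-\bar{X}_2\|^2$.
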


To close this subsection, we give a brief overview of the framework for the stochastic It\^{o} integral with respect to the
Wiener process in infinite dimensions,
one can see \cite{Prevot:07} to go into more details.
We consider a standard Q-Wiener process
 $W:[0,T] \times \Omega \rightarrow H$,
 where $H$ denotes a separable Hilbert space and the covariance operator
$Q \in \mathcal{L}(H)$
 is positive-semidefinite
 and self-adjoint.

 For the first part, we assume that $Q$ is of finite trace.
 The  stochastic integral of a stochastic process
 $\Phi:[0,T] \times \Omega \rightarrow \mathcal{L}(H)$,
 denoted by
 $\int_0^T \Phi(t) \dd W(t)$,
  is first defined in terms of elementary integrands of
 the form
 \begin{equation}
 \Phi(t) =
 \sum_{i=0}^{n-1}
 \Phi_i
 \mathds{1}_{(t_i,t_{i+1}]}
 (t),
 \quad
 \text{for}
 \quad
 t \in [0,T],
 \end{equation}
 where
 $0=t_0<\cdots<t_n=T$ for
 $n \in \N$ and
 $\Phi_i: \Omega \rightarrow \mathcal{L}(H)$ is
 $\mathcal{F}_{t_{i}}$-measurable for
 $0 \leq i \leq n-1$
 and
 only takes a finite number of values in $\mathcal{L}(H)$.
 Then the stochastic integral of $\Phi$ is given by
 \begin{equation}
 \int_0^T \Phi(t) \dd W(t)
 :=
 \sum_{i=0}^{n-1}
 \Phi_i
 \big(
 W(t_{i+1}) - W(t_{i})
 \big).
 \end{equation}
 In the next step,
 the  Hilbert-Schmidt operators play an important role
 in making It\^{o}-integral an isometry
 between these integrands and the space of all
H-valued continuous square-integrable martingales.
 The so-called It\^{o}-isometry  for all elementary integrands
 is given by
 \begin{equation}
 \E
 \Big[
 \Big\|
 \int_0^T \Phi(t) \dd W(t)
 \Big\|^2
 \Big]
 =
 \E
 \Big[
 \int_0^T
 \big\|
 \Phi(t)
 \big\|_{\mathcal{L}_2^0}^2
 \dd t
  \Big].
 \end{equation}
By the It\^{o}-isometry, the completeness of
$L^2([0,T] \times \Omega , \mathcal{L}_2^0)$
and classical approximation results for $L^2([0,T])$-functions and for compact operators,
the stochastic It\^{o} integral uniquely extends to all
$\Phi \in L^2([0,T] \times \Omega , \mathcal{L}_2^0)$.

If $\text{Tr}(Q)=\infty$, then we need to consider another Hilbert space
$(U_1,\|\cdot\|_{U_1})$
 such that
 there is a  Hilbert-Schmidt embedding
 $J: U_0 \rightarrow U_1$
 in order to define a $Q_1$-Wiener process
 with
 $Q_1=JJ^{\ast}$.
Since
$\text{Tr}(Q_1)<\infty$,
by what is already established for standard Q-Wiener processes,
we can  integrate processes
$\{\Phi(t)\}_{t \in [0,T]}$
which are
$\mathcal{L}_2(Q_1^{\nicefrac12}(U_1),H)$-predictable and
\begin{equation}
\E
\Big[
\int_0^T
\big\|
\Phi(t)
\big\|_{\mathcal{L}_2(Q_1^{\nicefrac12}(U_1),H)}^2
\dd t
\Big]
<\infty.
\end{equation}
But we are aiming at integrating processes with values in
$\mathcal{L}_2^0$,
we saw that $U_0$ is isometrically isomorphic to
$Q_1^{\nicefrac12}(U_1)$ under $J$,
this yields that
$\Phi \in \mathcal{L}_2^0$
if and only if
$\Phi \circ J^{-1} \in \mathcal{L}_2(Q_1^{\nicefrac12}(U_1),H)$.
Hence, we define
\begin{equation}\label{eq:cylindrical}
\int_0^T \Phi(t) \dd W(t)
=
\int_0^T \Phi(t)\circ J^{-1} \dd {\tilde{W}}(t),
\end{equation}
where the integral on the right hand side is the stochastic integral defined with respect to the
$Q_1$-Wiener process $\{{\tilde{W}}(t)\}_{t \in [0,T]}$.
Finally, we remark that the definition
 \eqref{eq:cylindrical}
 is independent of the particular
choice of $J$ and $U_1$,
thus
the stochastic It\^{o}-integral with respect to a cylindrical Wiener process is also well-defined.

\subsection{Introduction to Malliavin calculus}
In this part, we give a brief introduction to Malliavin calculus, which is a key tool for the weak analysis.
For a comprehensive knowledge one can refer to the classical monograph \cite{Nualart:06}.
By an application of  Kolmogorov Extension Theorem,
there exists  an isonormal process
$\mathcal{W}:L^2 ([0,T],U_0 ) \rightarrow  L^2(\Omega,\R)$
such that for any deterministic mapping
$\kappa\in L^2 ([0,T],U_0)$,
the random variable
$\mathcal{W}(\kappa)$
is centered Gaussian and has the covariance structure
\begin{align}
\E
\big[
\mathcal{W}(\kappa_1)\mathcal{W}(\kappa_2)
\big]
=
\langle
\kappa_1 , \kappa_2
\rangle_{L^2 ([0,T],U_0 )},
\,\,
\kappa_1,\kappa_2 \in L^2 ([0,T],U_0).
\end{align}
Next, for
 $\kappa_j \in L^2 ([0,T],U_0),j=1,2,\ldots,M $
and
$N,M\in\N$, $h_i\in H,i=1,2,\ldots,N$
, let $\vartheta(H)$ be a family of all smooth $H$-valued cylindrical random variables
\begin{equation}
\vartheta(H)=\Big\{G=\sum_{i=1}^N f_i \big( \mathcal{W}(\kappa_1),\ldots,\mathcal{W}(\kappa_M)\big)h_i:
f_i \in C_p^{\infty}(\R^M,\R)\Big\}.
\end{equation}
Here $C_p^{\infty}(\R^M,\R)$ represents the space of all continuous mappings $g:\R^M \rightarrow R$ with the infinite-times continuous Fr\'{e}chet differentiable derivatives such that $g$ and all its derivatives are at most polynomially growing.
Then we are ready to introduce the action of the Malliavin derivative on $G \in \vartheta(H)$:
\begin{equation}
\mathcal{D}_t G:=\sum_{i=1}^N \sum_{j=1}^M \partial_j f_i \big( \mathcal{W}(\kappa_1),\ldots,\mathcal{W}(\kappa_M)\big)h_i\otimes\kappa_j(t),
\end{equation}
where $h_i\otimes\kappa_j(t)$ denotes the tensor product, that is, for $1\leq j\leq M$ and $1\leq i\leq N$,
\begin{equation}
\big(
h_i \otimes \kappa_j(t)
\big)(u)
=
\langle
\kappa_j(t),u
\rangle_{U_0}
h_i \in H
,
\quad  \forall\,\,u\in U_0,~h_i\in H,~ t\in[0,T].
\end{equation}
The operator $\mathcal{D}_t$ is well-defined since $h_i\otimes\kappa_j(t)\in\mathcal{L}_2^0$.
For brevity, we write
$\langle{\mathcal D}_s G, u\rangle={\mathcal D}_s^uG$
to represent the derivative in the direction $u\in U_0$. Recall that if $G$ is $\mathcal F_t$-measurable, then
${\mathcal D}_sG= 0$ for $s > t$.
Thanks to the fact that ${\mathcal D}_t$ defines a closable operator, we then denote by $\mathbb D^{1,2}(H)$ the closure of the set of
smooth random variables $\vartheta(H)$ in $L^2(\Omega,H)$ with respect to the norm
$$
\|G\|_{\mathbb D^{1,2}(H)}=\Bigl(\E\big[\|G\|^2\big]+\E\int_0^T \|{\mathcal D}_tG\|_{\mathcal L_2^0}^2\dd t\Bigr)^{\tfrac12}.
$$
The chain rule of the Malliavin derivative holds. Namely, given a separable Hilbert space $\mathcal H$, if $\varrho\in C_b^1(H,\mathcal H)$
and $G\in \mathbb D^{1,2}(H)$, then $\varrho(G) \in \mathbb D^{1,2}(\mathcal H)$ and
${\mathcal D}_t^u(\varrho(G)) = \varrho'(G)\cdot{\mathcal D}_t^u G$.


Based on these preparations, at the very heart of Malliavin calculus is the following integration by parts formula
(see
\cite{Nualart:06}
and
\cite[Lemma 2.1]{Debussche2011weak}
)
.
For any $G\in \mathbb D^{1,2}(H)$ and adapted process
$\Upsilon \in L^2([0,T],\mathcal{L}_2^0)$,
the duality reads 
\begin{equation}\label{Malliavin integration by parts}
\E\Big[\Big\langle
\int_0^T \Upsilon(t)\dd W(t) , G
\Big\rangle\Big]=
\E\int_0^T
\big\langle
\Upsilon(t) , \mathcal{D}_t G
\big \rangle_{\mathcal{L}_2^0}
\dd t.
\end{equation}
Finally,
in
\cite[Display (2.23)]{Andersson2016weak}
,
the Malliavin derivative acts on the It\^o integral $\int_0^t \Upsilon(r)\dd W(r)$
 satisfying
for all $u \in U_0$,
\begin{equation}\label{Malliavin derivative on adjoint}
\mathcal D_s^u \int_0^t \Upsilon(r)\dd W(r)=
\int_0^t \mathcal D_s^u\Upsilon(r)\dd W(r)+\Upsilon(s)u,\quad 0\leq s\leq t\leq T.
\end{equation}
\section{Weak error estimates for the spectral Galerkin method}\label{sect:Galerkin_parabolic}
This section is devoted to the weak error analysis for the spectral Galerkin spatial semi-discretization.
%
For $N \in \N$, we define a
finite-dimensional subspace $H_{N} \subset H$
which is spanned by the $N$ first eigenvectors of the linear operator $A$.
Also, we define the projection operator $P_N$ from $\dot{H}^\alpha$ onto $H_N$ as $P_N x=\sum_{i=1}^N \langle x,e_i \rangle e_i ,~\forall x \in \dot{H}^\alpha,~\alpha \in \R$.
Meanwhile, by
 $I\in\mathcal{L}(H)$ we denote the identity mapping on $H$. Based on these facts, we can easily obtain that
 \begin{equation}\label{estimate:P_N-I}
 \|\big(P_N-I\big)A ^{-\alpha}\|_{\mathcal{L}(H)}\leq C\lambda_N^{-\alpha}, \quad \alpha\geq0.
 \end{equation}

In the sequel, we define $A_N=AP_N$ from $H$ to $H_N$ and $-A_N$ generates the analytic semigroup $E_N(t)=e^{-tA_N}$ in $H_N$ for any $t\in[0,\infty)$.
 As a result, the spatial semi-discretization of \eqref{eq:SGL-abstract} results in the
finite-dimensional SDEs
\begin{equation} \label{mild-spatial}
\dd X^N(t)+A_NX^N(t) \dd t=P_N F\left(X^N(t)\right) \dd t+P_N\dd W(t), \; X^N(0)=P_NX_0,
\end{equation}
whose unique mild solution is given by
\begin{align}\label{discrete;mild solution}
X^N(t)= E_N(t)P_NX_0
+
\int_{0}^{t}E_N{(t-r)}P_N F(X^N(r))\dd r
+
\mathcal{O}^N_t,
\quad
\mathcal{O}^N_t : = \!\int_{0}^{t} \! E_N{(t-r)}P_N \dd W(r).
\end{align}
\subsection{A priori estimate and regularity of the semi-discretization}
\label{sec;Some technical lemmas}
 Before starting the proof of weak convergence rate,
 we offer several  results which are essential in the convergence analysis.

\begin{lemma}\label{lem;O_t^N}
Let Assumptions \ref{ass:A-condition} and \ref{assum:eq-noise} hold.
Then for any $p \geq 2$,
we have
\begin{equation}\label{O_t^N}
\sup_{t\in[0,T],N\in\N}\|\mathcal{O}_t^N\|_{L^p(\Omega,V)} <\infty.
\end{equation}
\end{lemma}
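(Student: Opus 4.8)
The goal is to bound $\sup_{t\in[0,T],\,N\in\N}\|\mathcal{O}_t^N\|_{L^p(\Omega,V)}$, where $\mathcal{O}_t^N = P_N\mathcal{O}_t$ and $\mathcal{O}_t = \int_0^t E(t-s)\,\dd W(s)$. The cleanest route is to compare $\mathcal{O}_t^N$ with $\mathcal{O}_t$ and invoke the already-stated regularity $\E[\sup_{t\in[0,T]}\|\mathcal{O}_t\|_V^p]<\infty$ from \eqref{regularity;O_s}. Since the projection $P_N$ is not uniformly bounded on $V=C(\mathcal I,\R)$, I cannot simply write $\|\mathcal{O}_t^N\|_V\le\|P_N\|_{\mathcal L(V)}\|\mathcal{O}_t\|_V$; instead I would use a Sobolev embedding $\dot H^{\theta}\hookrightarrow V$ valid for $\theta>\tfrac12$ (on the one-dimensional domain $\mathcal I=(0,1)$), together with the fact that $P_N$ \emph{is} a contraction on every $\dot H^\alpha$.

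First I would fix $\theta\in(\tfrac12,\gamma)$ if $\gamma>\tfrac12$; the more delicate case $\gamma\le\tfrac12$ needs the extra hypothesis in Assumption \ref{assum:eq-noise} that $Q$ commutes with $A$, and I will treat that separately below. In the case $\gamma>\tfrac12$, use the embedding $\|v\|_V\le C\|v\|_\theta$ for $v\in\dot H^\theta$, then $\|\mathcal{O}_t^N\|_V\le C\|P_N\mathcal{O}_t\|_\theta\le C\|\mathcal{O}_t\|_\theta$ since $P_N$ commutes with $A^{\theta/2}$ and has operator norm $\le1$ on $H$. Hence
\begin{equation}
\sup_{t\in[0,T],\,N\in\N}\|\mathcal{O}_t^N\|_{L^p(\Omega,V)}
\le C\sup_{t\in[0,T]}\|\mathcal{O}_t\|_{L^p(\Omega,\dot H^\theta)},
\end{equation}
and the right-hand side is finite by \eqref{regularity;O_s} (with $\gamma$ there replaced by any $\theta\le\gamma$, which only requires less regularity). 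The supremum over $N$ is now automatic because the bound no longer depends on $N$.

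For the case $\gamma\le\tfrac12$, the Sobolev embedding into $V$ is no longer available directly from $\dot H^\gamma$, so I would instead estimate $\mathcal{O}_t^N$ by a direct stochastic-convolution argument, mimicking the proof of \eqref{regularity;O_s} (the modification of \cite[Theorem 5.25]{da2014stochastic}) but carried out at the level of the projected process. Using the factorization (stochastic Fubini) method: write $\mathcal{O}_t^N=\tfrac{\sin\pi\alpha}{\pi}\int_0^t (t-s)^{\alpha-1}E_N(t-s)Y_s^N\,\dd s$ with $Y_s^N=\int_0^s (s-r)^{-\alpha}E_N(s-r)P_N\,\dd W(r)$, choose $\alpha$ small, apply Hölder in $s$ and the embedding $\dot H^\theta\hookrightarrow V$ together with $\|A^{\theta/2}(t-s)^{\alpha-1}E_N(t-s)\|_{\mathcal L(H)}\le C(t-s)^{\alpha-1-\theta/2}$, and finally bound $\E\|Y_s^N\|^p$ by an Itô isometry that uses the commutativity of $Q$ with $A$ to control $\|A^{-\alpha}E_N(s-r)P_N Q^{1/2}\|_{\mathcal L_2(H)}$ in terms of $\|A^{(\gamma-1)/2}Q^{1/2}\|_{\mathcal L_2(H)}$. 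The key point: all semigroup and Hilbert–Schmidt bounds have $N$-independent constants since $E_N$, $P_N$ and $A^{\beta}P_N$ are uniformly bounded, so the resulting estimate is uniform in $N$.

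\textbf{Main obstacle.} The genuine difficulty is precisely the lack of uniform boundedness of $P_N$ on $V$, which forces the detour through a Hilbert space $\dot H^\theta$ with $\theta>\tfrac12$; this works immediately when $\gamma>\tfrac12$, but when $\gamma\le\tfrac12$ one does not have that much spatial regularity, and the factorization/Itô-isometry computation must be done carefully, making essential use of the commutativity of $Q$ and $A$ to keep all constants independent of $N$. Everything else — Burkholder–Davis–Gundy for the $L^p(\Omega)$ moments, the semigroup smoothing estimates \eqref{regularity of semigroup}, and the bound \eqref{estimate:P_N-I} — is routine.
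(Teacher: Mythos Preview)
Your treatment of the case $\gamma>\tfrac12$ is correct and coincides with the paper's: Sobolev embedding $\dot H^\theta\hookrightarrow V$ for some $\theta\in(\tfrac12,\gamma]$, together with $\|P_N\|_{\mathcal L(\dot H^\theta)}\le1$, reduces the claim to the known bound on $\|\mathcal O_t\|_{L^p(\Omega,\dot H^\theta)}$.

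For $\gamma\le\tfrac12$, however, your factorization sketch has a genuine gap. In the outer integral you invoke $\dot H^\theta\hookrightarrow V$ with $\theta>\tfrac12$ and the smoothing $\|A^{\theta/2}E_N(t-s)\|_{\mathcal L(H)}\le C(t-s)^{-\theta/2}$; combined with the factor $(t-s)^{\alpha-1}$, integrability forces $\alpha>\theta/2>\tfrac14$. On the other hand, the It\^o isometry for $Y_s^N$ gives
\[
\E\|Y_s^N\|^2\le C\int_0^s (s-r)^{-2\alpha}\,\|A^{(1-\gamma)/2}E_N(s-r)\|_{\mathcal L(H)}^2\,\|A^{(\gamma-1)/2}Q^{1/2}\|_{\mathcal L_2}^2\,\dd r,
\]
which is finite only if $\alpha<\gamma/2$. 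The two constraints $\alpha>\tfrac14$ and $\alpha<\gamma/2$ are incompatible when $\gamma\le\tfrac12$. You note that commutativity of $A$ and $Q$ should be used, but only to keep constants $N$-independent; that is not where it is needed. Commutativity does not enlarge the admissible range of $\alpha$ in this Hilbert-space factorization argument.

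The paper bypasses this obstruction by citing \cite[Lemma~5.4]{Arnulf2013Galerkin}; the underlying mechanism is made explicit in the proof of Lemma~\ref{lem;O_t^{M,N}} for the analogous object $\mathcal O_t^{M,N}$. The idea is to avoid $\dot H^\theta$ altogether and instead use commutativity to diagonalize: writing $\mathcal O_t^N(x)=\sum_{i\le N}\sqrt{q_i}\,e_i(x)\int_0^t e^{-\lambda_i(t-s)}\dd\beta_i(s)$, one estimates the pointwise increment $\E|\mathcal O_t^N(x)-\mathcal O_t^N(y)|^2\le C|x-y|^{2\gamma}$ directly from $|e_i(x)-e_i(y)|\le Ci|x-y|$ and $\sum_i q_i\lambda_i^{\gamma-1}<\infty$, then applies the embedding $W^{\gamma/2,p}\hookrightarrow V$ for $p>2/\gamma$. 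This fractional $L^p$-Sobolev embedding, unlike $\dot H^\theta\hookrightarrow V$, works for arbitrarily small $\gamma$ provided $p$ is large, and is where the commutativity assumption is genuinely exploited.
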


\begin{proof}
This lemma is an immediate consequence for $\gamma\in(0,\tfrac12]$
 from \cite[Lemma 5.4]{Arnulf2013Galerkin} under the condition that $A$ commutes
 with $Q$.
 In the case of $\gamma\in(\tfrac12,1]$, the assertion can be deduced easily with the aid of the Sobolev embedding inequality.
\end{proof}

In the sequel,
we denote
$\|u\|_{\mathbb{L}^{p}(\mathcal I\times[0,t])}^p
=
\int_0^t \|u(s)\|_{L^p(\mathcal{I},\R)}^p \dd s$
and
$\mathbb{L}^{p}:=\mathbb{L}^{p}(\mathcal I\times[0,t])$
for convenience.
The particular case $p=2$, equipped with the inner product $\langle u,w\rangle_{\mathbb{L}^{2}(\mathcal I\times[0,t])}=\int_0^t \langle u(s),w(s)\rangle \dd s$, turns to be the Hilbert space.
With the previous preparations, we will give the forthcoming estimate in \cite[Lemma 4.2]{wang2018efficient} which plays a key role in proving moment bounds.
\begin{proposition}\label{lem;deterministic}
Let $u^N,w^N:[0,T] \rightarrow H_N$
and $F$ coming from Assumption \ref{assum:Nonlinearity}
satisfy the problem,
\begin{equation}\label{deterministic case}
\left\{
\begin{lgathered}
\tfrac{\partial u^N(t)}{\partial t}+A_N u^N(t)=P_N F(u^N(t)+w^N(t)),\quad t\in(0,T], \\
u^N(0)=0.
\end{lgathered}
\right.
\end{equation}
Then for any $t\in[0,T]$ it holds
\begin{equation}\label{estimate;deterministic problem}
\|u^N(t)\|_V\leq C\big(
1+\|w^N\|_{\mathbb{L}^9(\mathcal I \times [0,t])}^{9}
\big).
\end{equation}
\end{proposition}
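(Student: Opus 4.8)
This estimate is established in \cite[Lemma 4.2]{wang2018efficient}; for completeness I outline how one proves it. The plan is to combine a priori energy estimates that exploit the dissipativity of the cubic with a bootstrap through the mild formulation of \eqref{deterministic case}.

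Write $v^N := u^N + w^N$ and recall that $F(v)=v-v^3$, whose leading term $-v^3$ is dissipative. I would first derive $L^p$-in-space a priori bounds by testing \eqref{deterministic case} with powers of $u^N$ (and/or with $A_N u^N$). The favourable features are: since $u^N(t)\in H_N\subset\mathrm{Dom}(A)$ is a trigonometric polynomial, $\langle A_N u^N, |u^N|^{p-2}u^N\rangle=(p-1)\int_{\mathcal I}|u^N|^{p-2}|\partial_x u^N|^2\,\dd x\ge0$, and (for even $p$) the reaction term reduces, up to a remainder controlled by the spectral gap $\lambda_N$ against this gradient term, to $\langle F(v^N),|u^N|^{p-2}u^N\rangle$; expanding $F(u^N+w^N)$ and using Young's inequality, every $w^N$-cross-term is absorbed into the dissipation $-\int_{\mathcal I}|u^N|^{p+2}\,\dd x$, leaving a remainder $\le C+C\|w^N(t)\|_{L^{p+2}}^{p+2}$. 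Integrating in $t$ from $u^N(0)=0$ gives, for each such $p$,
\[
\sup_{s\le t}\|u^N(s)\|_{L^p}^p + \int_0^t\!\!\int_{\mathcal I}|u^N(s)|^{p+2}\,\dd x\,\dd s \le C\Big(1+\int_0^t\|w^N(s)\|_{L^{p+2}}^{p+2}\,\dd s\Big).
\]

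Second, I would upgrade to the $V$-bound via the mild formula $u^N(t)=\int_0^t E_N(t-s)P_N F(v^N(s))\,\dd s$. Using \eqref{regularity of semigroup} in the form $\|A^{\sigma/2}E_N(t-s)P_N\|_{\mathcal L(H)}\le C(t-s)^{-\sigma/2}$ for a fixed $\sigma\in(\tfrac12,1)$, the one-dimensional embedding $\dot H^\sigma\hookrightarrow V$, and $\|F(v)\|_{L^2}\le C(1+\|v\|_{L^6}^3)$, one gets
\[
\|u^N(t)\|_V \le C\int_0^t(t-s)^{-\sigma/2}\big(1+\|v^N(s)\|_{L^6}^3\big)\,\dd s.
\]
A H\"older inequality in $s$ (exponents $3$ and $\tfrac32$, legitimate since $\tfrac{3\sigma}{4}<1$) bounds this by $C\big(1+(\int_0^t\|v^N(s)\|_{L^6}^9\,\dd s)^{1/3}\big)$. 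Since $\|v^N\|_{L^6}\le\|u^N\|_{L^6}+\|w^N\|_{L^9}$ on $\mathcal I$, the $w^N$-contribution is already $\le C(1+\|w^N\|_{\mathbb L^9(\mathcal I\times[0,t])}^9)$, and the $u^N$-contribution is controlled by inserting the a priori bound of the first step (say with $p=6$) together with one more time-H\"older; all the exponents collapse onto $\|w^N\|_{\mathbb L^9}^9$ precisely because the cubic evaluated at the $L^6$/$L^9$ level carries the factor $3\times3=9$.

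The main obstacle is to close the a priori estimate with the right dependence: because the nonlinearity is genuinely superlinear, a crude Gronwall argument would produce an exponential --- rather than the required polynomial, $N$-independent --- dependence on $\|w^N\|_{\mathbb L^9}$, so one must genuinely use the sign of the cubic and the $L^{p+2}$/gradient dissipation it generates to absorb the dangerous terms, and must check at each step that the spectral Galerkin projection $P_N$ does not spoil these absorptions. The remaining work is the bookkeeping of the H\"older exponents so that the final right-hand side is exactly $1+\|w^N\|_{\mathbb L^9(\mathcal I\times[0,t])}^9$.
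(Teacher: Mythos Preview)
The paper does not supply a proof of this proposition at all: it merely states the estimate and cites \cite[Lemma 4.2]{wang2018efficient}. Your proposal does the same citation and then sketches an argument, so there is nothing in the paper to compare against beyond the reference.

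As to the sketch itself, the overall two-step strategy (energy estimates exploiting the sign of the cubic to get space-time $L^p$ bounds, followed by the mild formulation together with $\dot H^\sigma\hookrightarrow V$ and H\"older in time to upgrade to $\|\cdot\|_V$) is the standard route and is what one expects the reference to do. One point is genuinely delicate and you have only waved at it: when you test \eqref{deterministic case} with $|u^N|^{p-2}u^N$ for $p>2$, that test function is \emph{not} in $H_N$, so the $P_N$ in front of $F(v^N)$ does not disappear. Your remark that the resulting commutator is ``controlled by the spectral gap $\lambda_N$ against the gradient term'' needs to be made precise to ensure the bound stays uniform in $N$; in fact the clean way is to use only test functions in $H_N$ (namely $u^N$ itself, giving $\int_0^t\|u^N\|_1^2$ and $\int_0^t\|u^N\|_{L^4}^4$) and then close the $\int_0^t\|u^N\|_{L^6}^9$ term in the bootstrap via Gagliardo--Nirenberg interpolation between these two quantities, rather than via a direct $L^p$ test with $p>2$. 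With that adjustment your outline goes through and matches what the cited lemma does.
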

In order to get the a priori moment bounds for the numerical approximation, we need additional assumption on the initial data.
\begin{assumption}\label{ass:initial-value2}
For $N\in\N$, the initial value satisfies
\begin{equation}\label{PNX0}
\sup_{N\in\N}\|P_NX_0\|_{V}<\infty.
\end{equation}
\end{assumption}
As a consequence, we obtain the next lemma, similar to the proof of \cite[Lemma 3.3]{wang2018efficient}.
\begin{lemma}\label{a priori estimate P_N X(t)}
Let Assumptions \ref{ass:A-condition}-\ref{ass:X0}, \ref{ass:initial-value2} hold and
let $X(t)$ be the mild solution of \eqref{eq:SGL-abstract}.
Then for any $p \geq 2$  it holds that
\begin{equation}
\sup_{N \in \N,t\in[0,T]}
\|P_NX(t)\|_{L^p(\Omega,V)}
 < \infty.
\end{equation}
\end{lemma}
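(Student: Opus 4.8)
The plan is to control $\|P_NX(t)\|_{L^p(\Omega,V)}$ by writing $P_N X(t)$ as the sum of a deterministic-type part with high spatial regularity and the discrete stochastic convolution $\mathcal{O}^N_t$, whose moments in $V$ are already uniformly bounded by Lemma~\ref{lem;O_t^N}. Concretely, set $Y^N(t):=P_NX(t)-\mathcal{O}^N_t$. Applying $P_N$ to the mild formula for $X(t)$ and using $P_N E(t-r)=E_N(t-r)P_N$ (since $H_N$ is spanned by eigenvectors of $A$), one gets
\begin{equation*}
Y^N(t)=E_N(t)P_NX_0+\int_0^t E_N(t-r)P_N F(X(r))\,\dd r ,
\end{equation*}
so $Y^N$ solves $\tfrac{\partial}{\partial t}Y^N(t)+A_N Y^N(t)=P_N F(Y^N(t)+\mathcal{O}^N_t)$ with initial datum $P_NX_0$. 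To invoke Proposition~\ref{lem;deterministic} verbatim I would further split off the contribution of the initial datum: let $u^N$ solve the same equation with $u^N(0)=0$ and $w^N(t):=E_N(t)P_NX_0+\mathcal{O}^N_t$, so that $Y^N(t)=E_N(t)P_NX_0+u^N(t)$. Then
\begin{equation*}
\|P_NX(t)\|_V\le \|E_N(t)P_NX_0\|_V+\|u^N(t)\|_V+\|\mathcal{O}^N_t\|_V .
\end{equation*}

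Next I would estimate the three pieces. The semigroup $E_N(t)$ is a contraction on $V=C(\mathcal I,\R)$ (it is the Dirichlet heat semigroup restricted to $H_N$, which is positivity-preserving and sub-Markovian, or one simply bounds $\|E_N(t)v\|_V\le C\|v\|_V$ using the smoothing estimate \eqref{regularity of semigroup} together with a Sobolev embedding $\dot H^s\hookrightarrow V$ for $s>1/2$), so $\|E_N(t)P_NX_0\|_V\le C\|P_NX_0\|_V\le C$ uniformly in $N$ by Assumption~\ref{ass:initial-value2}. For the middle term, Proposition~\ref{lem;deterministic} gives the pathwise bound $\|u^N(t)\|_V\le C\big(1+\|w^N\|_{\mathbb L^9(\mathcal I\times[0,t])}^9\big)$, and since $\|w^N\|_{\mathbb L^9}^9\le C\big(\int_0^t\|E_N(r)P_NX_0\|_V^9\dd r+\int_0^t\|\mathcal O^N_r\|_V^9\dd r\big)\le C\big(1+\int_0^T\|\mathcal O^N_r\|_V^9\dd r\big)$, taking $L^{p/9}(\Omega)$-norms (or directly $L^q(\Omega)$ for any $q$) and using Lemma~\ref{lem;O_t^N} with exponent $9q$ yields $\sup_{N,t}\|u^N(t)\|_{L^p(\Omega,V)}<\infty$. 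The last term is handled directly by Lemma~\ref{lem;O_t^N}.

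Combining, the triangle inequality in $L^p(\Omega,V)$ gives $\sup_{N\in\N,\,t\in[0,T]}\|P_NX(t)\|_{L^p(\Omega,V)}<\infty$, which is the claim. The only genuinely delicate point is the passage from the pathwise estimate of Proposition~\ref{lem;deterministic} to a moment estimate: one must make sure the stochastic forcing enters the right-hand side only through the integrated $\mathbb L^9$-norm of $\mathcal O^N$ (not through any quantity that is merely $\P$-a.s.\ finite without uniform moments), and then control the ninth power by H\"older/Jensen so that Lemma~\ref{lem;O_t^N} with a sufficiently large exponent closes the argument uniformly in $N$. I would also double-check the mapping-property claim $E_N(t)\colon V\to V$ with a bound independent of $N$ and $t\in[0,T]$, as this is what makes the initial-value contribution harmless; this is where I expect to spend the most care, though it is standard for the one-dimensional Dirichlet Laplacian.
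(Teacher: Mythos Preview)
There is a genuine gap at the step where you pass from the mild formula to the differential equation for $Y^N$. You correctly write
\[
Y^N(t)=E_N(t)P_NX_0+\int_0^tE_N(t-r)P_NF\big(X(r)\big)\,\dd r,
\]
so the equation it satisfies is $\partial_tY^N+A_NY^N=P_NF(X(t))$. But $Y^N(t)+\mathcal O^N_t=P_NX(t)$, not $X(t)$; since $F$ is nonlinear and does not commute with $P_N$, the right-hand side is \emph{not} $P_NF(Y^N+\mathcal O^N_t)$. In fact, the process $E_N(t)P_NX_0+u^N(t)$ you construct---with $u^N$ solving $\partial_tu^N+A_Nu^N=P_NF(u^N+w^N)$ for $w^N(t)=E_N(t)P_NX_0+\mathcal O^N_t$---is exactly $\bar X^N(t)=X^N(t)-\mathcal O^N_t$, so your argument reproduces Theorem~\ref{th:spatial-bound} (the bound for the Galerkin approximation $X^N$) rather than the present lemma about $P_NX$.

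To repair this one must keep the high-frequency tail of $X$ in the forcing. With $u^N(t):=\int_0^tE_N(t-r)P_NF(X(r))\,\dd r$ and $w(t):=X(t)-u^N(t)$ one has $\partial_tu^N+A_Nu^N=P_NF(u^N+w)$, where
\[
w(t)=E(t)X_0+\mathcal O_t+(I-P_N)\!\int_0^tE(t-r)F(X(r))\,\dd r.
\]
The first two pieces are controlled in $\mathbb L^9$ by Assumption~\ref{ass:X0} and \eqref{regularity;O_s}; the third is uniformly bounded because $(I-P_N)$ contracts every $\dot H^s$ and $\int_0^tE(t-r)F(X(r))\,\dd r\in\dot H^s$ for any $s\in(\tfrac12,2)$ with moments governed by $\|X(\cdot)\|_{L^p(\Omega,V)}$ from Theorem~\ref{them:regulairty-mild-solution}. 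One must also verify that the proof of Proposition~\ref{lem;deterministic} does not genuinely use $w^N\in H_N$. This corrected route is essentially what the paper intends by citing \cite[Lemma~3.3]{wang2018efficient}. Alternatively, since $\sup_t\|X(t)\|_{L^p(\Omega,V)}<\infty$ is already available, one can bypass Proposition~\ref{lem;deterministic} altogether and bound $\big\|\int_0^tE_N(t-r)P_NF(X(r))\,\dd r\big\|_V$ directly via the embedding $\dot H^{1/2+\epsilon}\hookrightarrow V$ and the smoothing estimate \eqref{regularity of semigroup}.
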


Now we consider the moment of $\|X^N(t)\|_V$ in the following theorem.
\begin{theorem}[A priori moment bounds for spatial semi-discretization]\label{th:spatial-bound}
 Under the Assumptions \ref{ass:A-condition}-\ref{ass:X0} and \ref{ass:initial-value2}, for any $p\geq2$, the unique mild solution $X^N(t)$ of \eqref{mild-spatial} satisfies
 \begin{equation}\label{eq:spatial-bound}
 \sup_{N \in \N,t\in[0,T]}
 \E \big[\|X^N(t)\|_V^p\big]
 < \infty.
 \end{equation}
 \end{theorem}

\begin{proof}
We firstly introduce a process
\begin{equation}
Z^N(t):=E_N(t)P_N X_0 + \mathcal O_t^N.
\end{equation}
Then we can recast \eqref{discrete;mild solution} as
\begin{equation}
\begin{split}
X^N(t)=\int_0^t E_N(t-r)P_N F(X^N(r))\dd r+Z^N(t).
\end{split}
\end{equation}
Furthermore, we denote
\begin{equation}
\hat{X}^N(t):=X^N(t)-Z^N(t)
=\int_0^t E_N(t-r)P_N F(\hat{X}^N(r)+Z^N(r))\dd r.
\end{equation}
Now one can apply Proposition \ref{lem;deterministic} to deduce that
\begin{equation}
\|\hat{X}^N(t)\|_V \leq
C
\big(
1+\|Z^N\|_{
\mathbb L^9(\mathcal I\times [0,t])}^9 \big),\quad \text{for}\,\, t\in[0,T].
\end{equation}
As a result, we have
\begin{align}\label{1}
\begin{split}
\E\big[\|\hat{X}^N(t)\|_V^p\big]
\leq
C\Big(1+\E\big[\|Z^N\|_{\mathbb L^9(\mathcal I\times [0,t])}^{9p}\big]\Big)
\leq C\Big(1+\E\Big[\int_0^t \|Z^N(s)\|_V^{9p}\dd s \Big] \Big).
\end{split}
\end{align}
Bearing \eqref{PNX0} and \eqref{O_t^N} in mind, one can verify the desired assertion.
\end{proof}

With Theorem \ref{th:spatial-bound} at hand, it is easy to validate the next corollary.
\begin{corollary}\label{cor:semi-regularity}
Under conditions in Theorem \ref{th:spatial-bound}, for any $p\geq 2$ it holds
\begin{equation}
\sup_{N \in \N,t\in[0,T]}
\E\big[
\|X^N(t)\|_{\gamma}^p
\big]
<\infty.
\end{equation}
Furthermore, for $0\leq s \leq t \leq T$,
\begin{equation}
\|X^{N}(t)-X^{N}(s)\|_{L^p(\Omega,H)}
\leq
C(t-s)^{\tfrac{\gamma}{2}}.
\end{equation}
Additionally,
\begin{equation}\label{eq:F-Spatial-Holder}
\|F(X^{N}(t)) - F(X^{N}(s))\|_{L^p(\Omega,H)}
\leq
C(t-s)^{\tfrac{\gamma}{2}}.
\end{equation}
\end{corollary}

Next we are prepared to give the regularity of the Malliavin derivative of $X^N(t)$.
 \begin{proposition}[Regularity of the Malliavin derivative]\label{Estimate of Malliavin derivative of the solution}
 Let Assumptions \ref{ass:A-condition}-\ref{ass:X0} and \ref{ass:initial-value2} hold.
 Then the Malliavin derivative of $X^N(t)$ satisfies
 \begin{align}
\E\big[\| \mathcal{D}_s X^N(t)   \|_{\mathcal{L}_2^0}^2\big]\leq C(t-s)^{\gamma-1},
\quad
0 \leq s < t \leq T.
 \end{align}
 \end{proposition}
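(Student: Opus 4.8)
The plan is to start from the mild form \eqref{discrete;mild solution} of $X^N(t)$ and apply the Malliavin derivative $\mathcal D_s$ term by term, using the chain rule for $\mathcal D_s$, the formula \eqref{Malliavin derivative on adjoint} for the derivative of the Itô integral, and the $\mathcal F_t$-measurability fact that $\mathcal D_s X^N(r)=0$ for $r<s$. Since $E_N(t)P_N X_0$ is deterministic, differentiating \eqref{discrete;mild solution} gives, for $0\le s<t\le T$ and any direction $u\in U_0$,
\begin{equation*}
\mathcal D_s^u X^N(t)
=
\int_s^t E_N(t-r)P_N F'(X^N(r))\,\mathcal D_s^u X^N(r)\,\dd r
+
E_N(t-s)P_N u ,
\end{equation*}
so that $Y^{s,u}(t):=\mathcal D_s^u X^N(t)$ solves a random linear (matrix-valued, since we work in the finite-dimensional space $H_N$) evolution equation with inhomogeneous initial-type term $E_N(t-s)P_N u$ at time $s$. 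Summing $\|\mathcal D_s^u X^N(t)\|^2$ over an orthonormal basis $u=Q^{1/2}\phi_i$ of $U_0$ then yields $\E[\|\mathcal D_s X^N(t)\|_{\mathcal L_2^0}^2]$, and the goal is to bound this by $C(t-s)^{\gamma-1}$.

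The core estimate is an energy/Grönwall argument on $Y^{s,u}$. Writing $Y(t)=Y^{s,u}(t)$, one takes the $H$-inner product of the differential form $\tfrac{\dd}{\dd t}Y(t)=-A_N Y(t)+P_N F'(X^N(t))Y(t)$ with $Y(t)$ and uses the one-sided Lipschitz bound \eqref{eq:F-one-sided-condition}, $\langle Y, F'(X^N)Y\rangle\le\|Y\|^2$, together with $\langle A_N Y, Y\rangle\ge 0$, to get $\tfrac12\tfrac{\dd}{\dd t}\|Y(t)\|^2\le \|Y(t)\|^2$, hence $\|Y(t)\|^2\le e^{2T}\|Y(s^+)\|^2$ where the "initial value" at $t=s^+$ is $E_N(t-s)P_N u$ evaluated in the limit — more carefully, one runs Grönwall on $[s,t]$ starting from the variation-of-constants form above, using $\|E_N(t-s)P_N u\|\le\|u\|$ and $\|P_N F'(X^N(r))\|_{\mathcal L(H)}\le C(1+\|X^N(r)\|_V^2)$ from \eqref{eq:F'-condition}. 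This gives $\|\mathcal D_s^u X^N(t)\|\le C(\omega)\|u\|$ with a random constant having moments of all orders by Theorem \ref{th:spatial-bound}. Summing over the basis then only produces $\E[\|\mathcal D_s X^N(t)\|_{\mathcal L_2^0}^2]\le C\,\|P_N\|_{\mathcal L_2^0}^2$, which is \emph{not} finite for cylindrical noise and in any case does not exhibit the singular factor $(t-s)^{\gamma-1}$. The fix is to not discard the smoothing of the semigroup: one keeps $\mathcal D_s^u X^N(t)=E_N(t-s)P_N u+(\text{bounded correction})$ and estimates
\begin{equation*}
\big\|\mathcal D_s X^N(t)\big\|_{\mathcal L_2^0}
\le
\big\|E_N(t-s)P_N Q^{1/2}\big\|_{\mathcal L_2(H)}
+
\Big\|\int_s^t E_N(t-r)P_N F'(X^N(r))\,\mathcal D_s X^N(r)\,\dd r\Big\|_{\mathcal L_2^0},
\end{equation*}
and for the first term writes $E_N(t-s)P_N Q^{1/2}=A^{(1-\gamma)/2}E_N(t-s)\cdot A^{(\gamma-1)/2}P_N Q^{1/2}$ and uses $\|A^{(1-\gamma)/2}E_N(t-s)\|_{\mathcal L(H)}\le C(t-s)^{-(1-\gamma)/2}$ from \eqref{regularity of semigroup} together with $\|A^{(\gamma-1)/2}Q^{1/2}\|_{\mathcal L_2(H)}<\infty$ from \eqref{eq:A-Q-condition}; squaring gives exactly $C(t-s)^{\gamma-1}$. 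For the second (correction) term one iterates: inside the $r$-integral plug the same decomposition for $\mathcal D_s X^N(r)$, bound $\|F'(X^N(r))\|_{\mathcal L(H)}$ in $L^p(\Omega)$ via Corollary \ref{cor:semi-regularity}/Theorem \ref{th:spatial-bound} and Hölder, and close with a (weakly) singular Grönwall lemma since $\int_s^t (r-s)^{\gamma-1}\,\dd r\le C(t-s)^\gamma$ is less singular than $(t-s)^{\gamma-1}$.

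The main obstacle I expect is precisely the bookkeeping in that last step: one must carry the factor $\|A^{(1-\gamma)/2}E_N(t-r)\cdot(\cdots)\|$ through the convolution without losing integrability at $r=s$ and at $r=t$ simultaneously, and handle the case $\gamma\le\tfrac12$ (where Assumption \ref{assum:eq-noise} additionally forces $Q$ to commute with $A$, which is what makes $\|A^{(\gamma-1)/2}P_N Q^{1/2}\|_{\mathcal L_2(H)}$ controllable and lets $E_N(t-s)$, $P_N$, $Q^{1/2}$ all be simultaneously diagonalized) separately from $\gamma\in(\tfrac12,1]$. Taking the $\mathcal L_2^0$-norm under the $\dd r$-integral (Minkowski's integral inequality) and then $L^2(\Omega)$ is the cleanest route; alternatively one can first obtain a pathwise bound on $\|\mathcal D_s^u X^N(t)\|$ of the form $C(\omega)(t-s)^{-(1-\gamma)/2}\|u\|$ along each coordinate $u=Q^{1/2}e_k$ using the diagonalization, then square and sum over $k$ using $\sum_k \lambda_k^{\gamma-1}\langle Q e_k,e_k\rangle<\infty$, and finally take expectations — this avoids Hilbert–Schmidt-valued Grönwall entirely at the cost of needing the commuting-$Q$ structure, which is exactly the regime where it is assumed. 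Either way, the singular-Grönwall closure and the uniform-in-$N$ moment bounds from Theorem \ref{th:spatial-bound} are the two ingredients that make the exponent $\gamma-1$ come out sharp.
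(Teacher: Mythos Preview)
Your proposal has the right ingredients and would lead to a correct proof, but the paper's route is cleaner and avoids both of the complications you anticipate.

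The paper starts from the same equation
\[
\mathcal D_s^y X^N(t)=E_N(t-s)P_N y+\int_s^t E_N(t-r)P_N F'(X^N(r))\,\mathcal D_s^y X^N(r)\,\dd r,
\]
and, as you do, sets $\Gamma_s^N(t,y):=\mathcal D_s^y X^N(t)-E_N(t-s)P_N y$. Instead of iterating or running a singular Gr\"onwall, however, the paper observes that $\Gamma_s^N(\cdot,y)$ solves an inhomogeneous linear problem whose homogeneous part is $\partial_t=-A_N+P_N F'(X^N(t))$. Calling $\Psi(t,r)$ the associated evolution operator, the one-sided condition \eqref{eq:F-one-sided-condition} (exactly the energy argument you wrote down for $Y$, but applied to $\Psi(\cdot,r)z$) gives the pathwise bound $\|\Psi(t,r)\|_{\mathcal L(H)}\le C$. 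Duhamel then yields
\[
\Gamma_s^N(t,y)=\int_s^t \Psi(t,r)\,P_N F'(X^N(r))\,E_N(r-s)P_N y\,\dd r,
\]
so the smoothing factor $E_N(r-s)$ sits \emph{inside} the integrand and one reads off directly
\[
\|\mathcal D_s^y X^N(t)\|\le C\Big(\int_s^t(1+\|X^N(r)\|_V^2)^2\,\dd r\Big)^{1/2}(t-s)^{-\alpha}\,\|A^{-\alpha}y\|,
\]
with $\alpha=(1-\gamma)/2$. Summing over $y=Q^{1/2}\varphi_i$ and taking expectations (using Theorem~\ref{th:spatial-bound}) finishes.

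Two simplifications compared to your sketch: first, the evolution-operator representation replaces your singular Gr\"onwall/iteration step entirely and sidesteps the H\"older bookkeeping that arises when you try to close an $L^2(\Omega,\mathcal L_2^0)$ Gr\"onwall loop with random coefficients $\|F'(X^N(r))\|_{\mathcal L(H)}$. Second, because the pathwise estimate is in terms of $\|A^{-\alpha}y\|$ for \emph{arbitrary} $y\in U_0$, no diagonalization and no case-splitting on $\gamma\le\tfrac12$ versus $\gamma>\tfrac12$ is needed here; the commuting assumption on $A,Q$ is not invoked in this proposition.
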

 \begin{proof}
Differentiating the equation \eqref{discrete;mild solution} in the direction $y\in U_0$ and by \eqref{Malliavin derivative on adjoint}, the chain rule we derive that for $0\leq s\leq t \leq T$,
\begin{align}
\mathcal{D}_s^y X^N(t)
=E_N(t-s)P_N y
+\int_s^t E_N(t-r)P_N F'(X^N(r))\mathcal{D}_s^y X^N(r) \dd r.
 \end{align}
Therefore, we get
 \begin{align}\label{Malliavin-F1}
\int_s^t E_N(t-r)P_N F'(X^N(r))\mathcal{D}_s^y X^N(r) \dd r=\mathcal{D}_s^y X^N(t)-E_N(t-s)P_N y
=:\Gamma^N_s(t,y).
 \end{align}
It is easy to check that $\Gamma^N_s(t,y)$ is time differentiable and satisfies the following equation
\begin{equation}
\left\{
\begin{lgathered}
\tfrac{\dd}{\dd t} \Gamma^N_s(t,y)
=
\big(
-A_N
+P_N F'(X^N(t))
\big)
\Gamma^N_s(t,y)
+
P_N F'(X^N(t))E_N(t-s)P_N y,
\\
\Gamma^N_s(s,y)=0.
\end{lgathered}
\right.
\end{equation}
Consequently,
\begin{align}
\Gamma^N_s(t,y)
=
\int_s^t
\Psi(t,r)
P_N
F'(X^N(r))
E_N(r-s)
P_N y
\,
\dd r,
\end{align}
 where $\Psi(t,r)$ is the evolution operator associated with the linear equation
  \begin{align}
\tfrac{\dd}{\dd t} \Psi(t,r)z
=
-A_N\Psi(t,r)z
+
P_N F'(X^N(t))\Psi(t,r)z,
\qquad \Psi(r,r)z=z.
 \end{align}
Multiplying both sides by $\Psi(t,r)z$ and integrating over $[r,t]$
, also considering
\eqref{eq:F-one-sided-condition} and
Theorem \ref{th:spatial-bound} assure
\begin{align}
\begin{split}
\|\Psi(t,r)z\|^2
&
\leq
\|z\|^2
+
2 \! \int_r^t \!
\big<
\Psi(u,r)z,-A_N\Psi(u,r)z
\big>
\dd u
\\
&
\quad
+2\!
\int_r^t\!
\big<
\Psi(u,r)z,(F'(X^N(u)))\Psi(u,r)z
\big>
\dd u
\\&\leq
\|z\|^2
+2\int_r^t \|\Psi(u,r)z\|^2 \dd u.
\end{split}
\end{align}
By use of Gronwall's inequality we deduce that
for all $z \in H$,
\begin{equation}
\|\Psi(t,r)z\| \leq C \|z\|.
\end{equation}
This yields for $\alpha < \tfrac12$ and $s < t$,
\begin{align}\label{Malliavin-F2}
\begin{split}
\|\Gamma^N_s(t,y)\|
&
\leq
C
\int_s^t
\|P_N F'(X^N(r))E_N(r-s)y\| \dd r
\\
&
\leq
C
\Big(
\int_s^t [1+\|X^N(r)\|_V^2]^2 \dd r
\Big)^{\tfrac12}
\Big(
\int_s^t (r-s)^{-2\alpha}\dd r
\Big)^{\tfrac12}
\|A^{-\alpha}y\|
\\
&
\leq
C
\Big(
\int_s^t [1+\|X^N(r)\|_V^2]^2 \dd r
\Big)^{\tfrac12}
\big(
t-s
\big)^{\tfrac12-\alpha}
\|A^{-\alpha}y\|
,
\end{split}
\end{align}
due to
\eqref{eq:F'-condition},
Assumption \ref{assum:Nonlinearity},
Cauchy-Schwartz inequality
and
the fact
$\|E_N(t-s)y\|\leq C(t-s)^{-\alpha}\|A^{-\alpha}y\|$.
Therefore,
we deduce from \eqref{Malliavin-F1} and \eqref{Malliavin-F2} that
\begin{align}
\begin{split}
\|\mathcal{D}_s^y X^N(t)\|
&
\leq
\|\Gamma^N_s(t,y)\|
+
\|E_N(t-s) P_N y\|
\\&
\leq C
 \Big(\int_s^t [1+\|X^N(r)\|_V^2]^2 \dd r \Big)^{\tfrac12} (t-s)^{\tfrac12-\alpha}\|A^{-\alpha}y\|
+
C \, (t-s)^{-\alpha}\|A^{-\alpha}y\|
.
 \end{split}
 \end{align}
 Finally, taking $y=Q^{\nicefrac12}\varphi_i,~i \in \N $ (Here $\{\varphi_i\}_{i\in \N}$ forms an orthonormal basis of $H$) and $\alpha=\tfrac{1-\gamma}{2}$,
 also considering
 \eqref{eq:spatial-bound}
 in
 Theorem \ref{th:spatial-bound}
 yield that
\begin{align}
\begin{split}
\E \big[ \|\mathcal{D}_s X^N(t)\|_{\mathcal{L}_2^0}^2\big]
&\leq
C  \sum_{i \in \N}
\E \Big(
\int_s^t [1+\|X^N(r)\|_V^2]^2 \dd r
\Big)
\Big\|
A^{\tfrac{\gamma-1}{2}}Q^{\tfrac12}\varphi_i
\Big\|^2
(t-s)^{\gamma}
\\&
\qquad  \qquad
+C \sum_{i \in \N}
\Big\|
A^{\tfrac{\gamma-1}{2}}Q^{\tfrac12}\varphi_i
\Big\|^2
(t-s)^{\gamma-1}
\\&\leq
C ~ T \Big\|A^{\tfrac{\gamma-1}{2}}Q^{\tfrac12}\Big\|_{\mathcal{L}_2}^2
(t-s)^{\gamma-1}
+
C~\Big\|A^{\tfrac{\gamma-1}{2}}Q^{\tfrac12}\Big\|_{\mathcal{L}_2}^2
(t-s)^{\gamma-1}
\\&\leq
C~\Big\|A^{\tfrac{\gamma-1}{2}}Q^{\tfrac12}\Big\|_{\mathcal{L}_2}^2
(t-s)^{\gamma-1}
\\&
\leq C(t-s)^{\gamma-1},
 \end{split}
 \end{align}
as required.
 \end{proof}

\subsection{Weak convergence rate of the spatial semi-discretization}\label{sect:weak_parabolic}
In addition to the above preparations, we still rely on the following regularity results of the nonlinearity, which are important in
identifying the expected weak error rates.
\begin{lemma}\label{lemma;F1}
Let $F:L^6(\mathcal{I},\mathbb{R})\rightarrow H$ be the Nemytskii operator defined in Assumption \ref{assum:Nonlinearity}.
Then
\begin{equation}
\|F(\phi)\|_1\leq C\big(1+\|\phi\|_{V}^2\big)
\|\phi\|_1,
\quad
\forall \phi \in \dot{H}^1.
\end{equation}
\end{lemma}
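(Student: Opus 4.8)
The plan is to reduce the $\dot H^1$-bound on $F(\phi)$ to elementary estimates on the cubic nonlinearity $f(v)=v-v^3$ combined with the Sobolev embedding $\dot H^1 \hookrightarrow V$ in one spatial dimension. Recall that for $\phi\in\dot H^1=\mathrm{Dom}(A^{1/2})=H_0^1(\mathcal I)$ we have $\|\phi\|_1 = \|A^{1/2}\phi\| = \|\phi'\|_{L^2}$, and that $\|F(\phi)\|_1 = \|A^{1/2}F(\phi)\|$. Since $F(\phi)(x)=f(\phi(x))=\phi(x)-\phi(x)^3$ and, for $\phi\in H_0^1$, the composition $f(\phi)$ again lies in $H_0^1$ (it vanishes on $\partial\mathcal I$ and is weakly differentiable with $(f(\phi))' = f'(\phi)\,\phi' = (1-3\phi^2)\phi'$), one has $\|F(\phi)\|_1 = \|(f(\phi))'\|_{L^2} = \|(1-3\phi^2)\phi'\|_{L^2}$.

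The key steps, in order, are: (i) write $\|F(\phi)\|_1 = \|(1-3\phi^2)\phi'\|_{L^2}$ as above; (ii) bound pointwise $|1-3\phi(x)^2|\le 1+3\|\phi\|_V^2$, so that $\|(1-3\phi^2)\phi'\|_{L^2} \le (1+3\|\phi\|_V^2)\,\|\phi'\|_{L^2} = (1+3\|\phi\|_V^2)\,\|\phi\|_1$; (iii) absorb the constant $3$ into the generic constant $C$ to obtain $\|F(\phi)\|_1 \le C(1+\|\phi\|_V^2)\|\phi\|_1$. One should also remark that the right-hand side is finite for $\phi\in\dot H^1$ because the embedding $\dot H^1\hookrightarrow V$ gives $\|\phi\|_V\le C\|\phi\|_1<\infty$, which justifies all the manipulations (in particular $f(\phi)\in L^2$ and $f'(\phi)\phi'\in L^2$), so $F(\phi)\in\dot H^1$ as claimed.

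The only point requiring a little care — the "main obstacle," though it is mild — is justifying that $f(\phi)\in H_0^1$ with the stated weak derivative, i.e. that the chain rule applies to the composition of the smooth cubic $f$ with $\phi\in H_0^1(\mathcal I)$. In dimension one this is immediate: $\phi\in H_0^1(\mathcal I)$ is (absolutely) continuous on $\overline{\mathcal I}$ and vanishes at the endpoints, $f\in C^1(\R)$ with $f(0)=0$, so $f(\phi)$ is absolutely continuous, vanishes at the endpoints, and its classical a.e. derivative is $(1-3\phi^2)\phi'\in L^2$; hence $f(\phi)\in H_0^1=\dot H^1$. After that, the inequality is just the pointwise bound on $1-3\phi^2$ together with $\|\phi\|_\infty=\|\phi\|_V$, and the proof is complete.
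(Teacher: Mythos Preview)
Your proof is correct and follows essentially the same approach as the paper: both identify $\|F(\phi)\|_1$ with $\|(1-3\phi^2)\phi'\|_{L^2}$ via the chain rule and then use the pointwise bound $|1-3\phi(x)^2|\le C(1+\|\phi\|_V^2)$. Your version is in fact slightly more careful, since you explicitly justify that $f(\phi)\in H_0^1$ and that the chain rule applies, a point the paper leaves implicit.
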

\begin{proof}
Noting that $f(\phi)=-\phi^3+\phi$, one can derive
\begin{equation}
\begin{split}
\|F(\phi)\|_1^2&=
\|\nabla F(\phi)\|^2
=\int_{\mathcal{I}}
\Big|\tfrac{\dd}{\dd\xi}f(\phi(\xi))\Big|^2\dd \xi
=\int_{\mathcal{I}}
\Big|f'(\phi(\xi))\phi^{'}(\xi)\Big|^2\dd \xi
\\
&=\int_{\mathcal{I}}
\Big|\big(1-3\phi^2(\xi)\big)\phi^{'}(\xi)\Big|^2\dd \xi
\leq C(1+\|\phi\|_V^4)\|\phi\|_1^2
.
\end{split}
\end{equation}
\end{proof}

\begin{lemma}\label{lemma;F}
Let $F:L^6(\mathcal{I},\mathbb{R})\rightarrow H$ be the Nemytskii operator defined in Assumption \ref{assum:Nonlinearity}.
Then for any
$\theta\in(0,1)$ and $\eta \geq 1$
it holds
\begin{align}
\begin{split}
 &\|F'(\varsigma)\psi \|_{-\eta}
\leq C\big(1+\max\{\|\varsigma\|_V,\|\varsigma\|_{\theta}\}^2\big)
\|\psi\|_{-\theta},
\quad
\forall
\varsigma \in V\cap\dot{H}^{\theta},
\psi \in V.
\end{split}
\end{align}
\end{lemma}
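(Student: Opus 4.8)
The plan is to prove the estimate by duality, writing the $\dot H^{-\eta}$ norm as a supremum over test functions and reducing everything to a pointwise (Nemytskii) estimate on $f'$. First I would fix $\varsigma\in V\cap\dot H^\theta$ and $\psi\in V$, and note that since $\eta\ge 1$ we have the continuous embedding $\dot H^{-1}\hookrightarrow\dot H^{-\eta}$ (equivalently $\|\cdot\|_{-\eta}\le C\|\cdot\|_{-1}$ because $\lambda_k=k^2\ge 1$), so it suffices to establish the bound with $\eta=1$, i.e. $\|F'(\varsigma)\psi\|_{-1}\le C(1+\max\{\|\varsigma\|_V,\|\varsigma\|_\theta\}^2)\|\psi\|_{-\theta}$. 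Recalling $\big(F'(\varsigma)\psi\big)(x)=(1-3\varsigma^2(x))\psi(x)$, I would split $F'(\varsigma)\psi=\psi-3\varsigma^2\psi$ and treat the two terms separately.

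For the linear term $\psi$, monotonicity of the norms gives immediately $\|\psi\|_{-1}\le C\|\psi\|_{-\theta}$ since $\theta<1$, which is dominated by the right-hand side. The main term is $\varsigma^2\psi$. Here I would use the duality characterization
\[
\|\varsigma^2\psi\|_{-1}=\sup_{\,0\ne\phi\in\dot H^1}\frac{|\langle \varsigma^2\psi,\phi\rangle|}{\|\phi\|_1},
\]
and for fixed $\phi\in\dot H^1$ estimate $|\langle\varsigma^2\psi,\phi\rangle|=|\langle\psi,\varsigma^2\phi\rangle|\le\|\psi\|_{-\theta}\|\varsigma^2\phi\|_\theta$. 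It then remains to bound $\|\varsigma^2\phi\|_\theta$ by $C\|\varsigma\|_V\,\max\{\|\varsigma\|_V,\|\varsigma\|_\theta\}\,\|\phi\|_1$. In one space dimension, $\dot H^\theta=H^\theta(\mathcal I)$ for $\theta\in(0,1)$ with the Sobolev–Slobodeckij norm, which is well adapted to products: one has the algebra/multiplication estimate $\|gh\|_\theta\le C(\|g\|_V\|h\|_\theta+\|h\|_V\|g\|_\theta)$, together with $\|\phi\|_V\le C\|\phi\|_1$ (Sobolev embedding $\dot H^1\hookrightarrow V$ in dimension one) and $\|\phi\|_\theta\le C\|\phi\|_1$. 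Applying the multiplication estimate twice — first to $g=\varsigma^2$, $h=\phi$, then to $g=h=\varsigma$ — and using $\|\varsigma^2\|_V=\|\varsigma\|_V^2$ and $\|\varsigma^2\|_\theta\le C\|\varsigma\|_V\|\varsigma\|_\theta$, one collects a factor $(\|\varsigma\|_V^2+\|\varsigma\|_V\|\varsigma\|_\theta)\le 2\max\{\|\varsigma\|_V,\|\varsigma\|_\theta\}^2$ times $\|\phi\|_1$, which is exactly what is needed. Taking the supremum over $\phi$ and combining with the linear term yields the claim.

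The step I expect to be the main obstacle is the fractional product (multiplication) estimate $\|gh\|_\theta\lesssim\|g\|_V\|h\|_\theta+\|h\|_V\|g\|_\theta$ and making precise which realization of $\dot H^\theta$ is used (domain of $A^{\theta/2}$ versus the Slobodeckij space $H^\theta(\mathcal I)$); these coincide for $\theta\in(0,1/2)$ unconditionally and for $\theta\in(1/2,1)$ under the Dirichlet boundary conditions in force here, and at $\theta=1/2$ one may simply avoid the endpoint by monotonicity. An alternative, avoiding fractional Sobolev product rules altogether, is to interpolate: bound $\|\varsigma^2\psi\|_0=\|\varsigma^2\psi\|$ and $\|\varsigma^2\psi\|_{-1}$ separately against $\|\psi\|_0$ and $\|\psi\|_{-1}$ respectively — the first using $\|\varsigma^2\psi\|\le\|\varsigma\|_V^2\|\psi\|$, the second as above — and then interpolate in the pair $(\dot H^{-1},\dot H^0)$ to land in $\dot H^{-\theta}$ on the right; this route trades the product estimate for an interpolation argument and is the fallback I would use if the direct fractional estimate proves awkward to cite cleanly.
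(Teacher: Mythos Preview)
Your proposal is correct and follows essentially the same route as the paper: duality against $\dot H^1$ test functions together with a fractional Sobolev (Slobodeckij) product estimate. The paper does not split $F'(\varsigma)=I-3\varsigma^2$ or reduce to $\eta=1$ but instead proves directly that $\|F'(\varsigma)\upsilon\|_\theta\le C(1+\max\{\|\varsigma\|_V,\|\varsigma\|_\theta\}^2)(\|\upsilon\|_\theta+\|\upsilon\|_V)$ via the Slobodeckij seminorm decomposition, then applies it with $\upsilon=A^{-\eta/2}\varphi$ in the duality pairing; this is exactly your product estimate $\|gh\|_\theta\lesssim\|g\|_V\|h\|_\theta+\|h\|_V\|g\|_\theta$ specialized to $g=f'(\varsigma)$, so the two arguments coincide up to cosmetic reorganization.
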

\begin{proof}
Standard arguments with the Sobolev-Slobodeckij norm yield that
\begin{equation}
\begin{split}
   \| F' ( \varsigma ) \upsilon \|_{\theta}^2
 & \leq
   C  \|F' ( \varsigma ) \upsilon \|^2
   +
   C \int_0^1 \int_0^1 \frac{\big|f'(\varsigma(x)) \upsilon(x)
   -
   f'(\varsigma(y)) \upsilon(y)\big|^2} {|x-y|^{2{\theta}+1}}
   \, \dd y \dd x
\\
 &
   \leq
   C  \|F ' ( \varsigma ) \upsilon \|^2
   +
   C \int_0^1 \int_0^1 \frac{\big|f'(\varsigma(x)) (\upsilon(x)-\upsilon(y))\big|^2}
   {|x-y|^{2{\theta}+1}}
   \, \dd y \dd x
\\
 &
   \quad +
   C \int_0^1 \int_0^1
   \frac{ \big| [ f'(\varsigma(x))- f'(\varsigma(y)) ] \upsilon(y)\big|^2}
   {|x-y|^{2{\theta}+1}}
   \, \dd y \dd x
\\
 &
  \leq
   C  \big\|F ' ( \varsigma ) \upsilon \big\|^2
   +
   C  \big\| f'(\varsigma(\cdot)) \big\|_V^2 \cdot \| \upsilon \|^2_{W^{{\theta},2}}
 +
   C
   \big\|
   f''( \varsigma(\cdot) )
   \big\|_V^2
   \cdot
   \|\upsilon\|_V^2
   \cdot
   \| \varsigma \|^2_{W^{{\theta},2}}
\\
 &
   \leq
   C \big( 1+ \|\varsigma\|_V^{ 4 } \big)  \|\upsilon\|^2
   +
   C  \big( 1+ \|\varsigma\|_V^{ 4 } \big) \|\upsilon\|^2_{\theta}
 +
   C \big( 1 +  \|\varsigma\|_V^{ 2 } \big)
   \|\upsilon\|^2_V \cdot \|\varsigma\|^2_{\theta}
\\
 &
   \leq
   C  \big(1+\max{ \{ \| \varsigma \|_V ,\| \varsigma \|_{\theta} \} }
   ^{4} \big)
   (\|\upsilon\|^2_{\theta} + \|\upsilon\|^2_V ).
\end{split}
\end{equation}
Accordingly, one can show that for $ \varsigma \in V\cap\dot{H}^{\theta}$, $\psi \in V $,
\begin{equation}
\begin{split}
\|F'(\varsigma)\psi\|_{-\eta}&=
\sup_{\varphi\in H}
\frac{\big|\big<A^{-\tfrac{\eta}{2}}F'(\varsigma)\psi,\varphi\big>\big|}
{\|\varphi\|}
=
\sup_{\varphi\in H}
\frac{\big| \big<A^{-\tfrac{\theta}{2}}\psi,
A^{\tfrac{\theta}{2}}F'(\varsigma)A^{-\tfrac{\eta}{2}}\varphi\big>\big|}
{\|\varphi\|}
\\
&\leq \sup_{\varphi\in H}
\frac{\|\psi\|_{-\theta}\|F'(\varsigma)A^
{-\tfrac{\eta}{2}}\varphi\|_{\theta}}
{\|\varphi\|}\leq C\big(1+\max\{\|\varsigma\|_V,\|\varsigma\|_{\theta}\}^2\big)
\|\psi\|_{-\theta}.
\end{split}
\end{equation}
This finishes the proof.
\end{proof}
Armed with the above preparatory results, we are now ready to obtain weak convergence rate for the spectral Galerkin method.
\begin{theorem}[Spatial weak convergence rate] \label{The;weak covergence}
Suppose Assumptions \ref{ass:A-condition}-\ref{ass:X0} and \ref{ass:initial-value2} are fulfilled.
Let $X(t)$ and $X^N(t)$ be the
mild solution of \eqref{eq:SGL-abstract} and \eqref{mild-spatial}, respectively.
Then for sufficiently small $\epsilon>0$ and arbitrary test function $\Phi\in C_b^2(H,\R)$ we have
\begin{equation}
\big|\E\big[ \Phi(X(T)) \big]- \E\big[ \Phi(X^N(T))\big]\big|\leq C
\lambda_N^{-\gamma+\epsilon}.
\end{equation}
\end{theorem}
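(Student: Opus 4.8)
The plan is to run the weak error through the auxiliary processes $\bar X(t):=X(t)-\mathcal{O}_t$ and $\bar X^N(t):=X^N(t)-\mathcal{O}^N_t$, where $\mathcal{O}^N_t=P_N\mathcal{O}_t$, following the decomposition \eqref{eq:intro-spatial-error-decomposition}. Since $\bar X(T)+\mathcal{O}_T=X(T)$ and $\bar X^N(T)+\mathcal{O}^N_T=X^N(T)$, the weak error equals $\mathrm{I}+\mathrm{II}$, where $\mathrm{I}:=\E[\Phi(\bar X(T)+\mathcal{O}_T)]-\E[\Phi(\bar X^N(T)+\mathcal{O}_T)]$ isolates the discrepancy between $\bar X$ and $\bar X^N$, and $\mathrm{II}:=\E[\Phi(\bar X^N(T)+\mathcal{O}_T)]-\E[\Phi(\bar X^N(T)+\mathcal{O}^N_T)]$ isolates the discrepancy between $\mathcal{O}$ and $\mathcal{O}^N$; I would show $|\mathrm{I}|+|\mathrm{II}|\le C\lambda_N^{-\gamma+\epsilon}$. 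Throughout, Theorem~\ref{them:regulairty-mild-solution}, Theorem~\ref{th:spatial-bound}, Corollary~\ref{cor:semi-regularity} and \eqref{regularity;O_s} supply every moment of $\|X(t)\|_V$, $\|X^N(t)\|_V$, $\|X^N(t)\|_\gamma$, $\|\mathcal{O}_t\|_V$ and $\|\mathcal{O}_t\|_\gamma$ needed to absorb the ``local Lipschitz'' factors $1+\|\cdot\|_V^2$ through H\"older's inequality.

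For $\mathrm{I}$, a first order Taylor (mean value) expansion yields $|\mathrm{I}|\le\|\Phi'\|_\infty\big(\|\bar X^N(T)-P_N\bar X(T)\|_{L^2(\Omega,H)}+\|(I-P_N)\bar X(T)\|_{L^2(\Omega,H)}\big)$. Having removed the stochastic convolution, and using the parabolic smoothing \eqref{regularity of semigroup} on the deterministic term $E(T)X_0$ at the fixed time $T>0$, one checks that $\bar X(T)\in L^2(\Omega,\dot{H}^{\rho})$ for every $\rho<2$, so \eqref{estimate:P_N-I} bounds the second summand by $C\lambda_N^{-\rho/2}\le C\lambda_N^{-\gamma+\epsilon}$ as soon as $\gamma-\epsilon\le\rho/2<1$. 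For the first summand, subtracting the mild representations of $\bar X^N$ and of $P_N\bar X$ gives $\bar X^N(t)-P_N\bar X(t)=\int_0^t E_N(t-r)P_N\big(F(X^N(r))-F(X(r))\big)\dd r$; I would measure $F(X^N(r))-F(X(r))=\int_0^1 F'(\varsigma_{r,s})\big(X^N(r)-X(r)\big)\dd s$ (with $\varsigma_{r,s}$ on the segment between $X(r)$ and $X^N(r)$) in the dual norm $\|\cdot\|_{-\eta}$, $\eta\in[1,2)$, via Lemma~\ref{lemma;F} (with $\theta=\gamma$ if $\gamma<1$, $\theta\uparrow1$ if $\gamma=1$), split $X^N(r)-X(r)=\big(\bar X^N(r)-P_N\bar X(r)\big)-(I-P_N)\bar X(r)-(I-P_N)\mathcal{O}_r$, and use \eqref{estimate:P_N-I}; in this way the only genuinely non-negligible source term, $(I-P_N)\mathcal{O}_r$, costs at most $\|(I-P_N)\mathcal{O}_r\|_{-\gamma}\le C\lambda_N^{-\gamma}\|\mathcal{O}_r\|_\gamma$. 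Combining the smoothing $\|A^{\eta/2}E_N(t-r)\|_{\mathcal{L}(H)}\le C(t-r)^{-\eta/2}$ with a singular Gronwall argument (closed in $L^p(\Omega,H)$ for every $p$) then gives $\|\bar X^N(T)-P_N\bar X(T)\|_{L^2(\Omega,H)}\le C\lambda_N^{-\gamma+\epsilon}$, hence $|\mathrm{I}|\le C\lambda_N^{-\gamma+\epsilon}$.

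For $\mathrm{II}$, set $\delta:=\mathcal{O}_T-\mathcal{O}^N_T=(I-P_N)\mathcal{O}_T$, so that $\bar X^N(T)+\mathcal{O}_T=X^N(T)+\delta$, and Taylor expand to second order:
\begin{equation*}
\mathrm{II}=\E\big[\Phi'(X^N(T))\delta\big]+\E\Big[\int_0^1\Phi''\big(X^N(T)+\lambda\delta\big)(\delta,\delta)(1-\lambda)\dd\lambda\Big].
\end{equation*}
The remainder is bounded by $\|\Phi''\|_\infty\|(I-P_N)\mathcal{O}_T\|_{L^2(\Omega,H)}^2\le C\lambda_N^{-\gamma}$ via \eqref{estimate:P_N-I} and \eqref{regularity;O_s}; this is precisely the term that makes the weak rate essentially twice the strong one. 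For the first term I would write $\delta=\int_0^T(I-P_N)E(T-s)\dd W(s)$ and apply the Malliavin duality \eqref{Malliavin integration by parts} with $G:=\nabla\Phi(X^N(T))\in\mathbb{D}^{1,2}(H)$, the $H$-representative of $\Phi'(X^N(T))$. Since $\Phi\in C_b^2(H,\R)$, the chain rule gives $\|\mathcal{D}_s G\|_{\mathcal{L}_2^0}\le\|\Phi''\|_\infty\|\mathcal{D}_s X^N(T)\|_{\mathcal{L}_2^0}$, so Cauchy--Schwarz, Fubini and Proposition~\ref{Estimate of Malliavin derivative of the solution} give
\begin{equation*}
\big|\E[\Phi'(X^N(T))\delta]\big|\le C\int_0^T\|(I-P_N)E(T-s)\|_{\mathcal{L}_2^0}\,(T-s)^{\tfrac{\gamma-1}{2}}\dd s.
\end{equation*}
Factoring $(I-P_N)E(T-s)Q^{\tfrac12}=(I-P_N)A^{-a/2}\cdot A^{\tfrac{a+1-\gamma}{2}}E(T-s)\cdot A^{\tfrac{\gamma-1}{2}}Q^{\tfrac12}$ and using \eqref{estimate:P_N-I}, \eqref{regularity of semigroup} and Assumption~\ref{assum:eq-noise} yields $\|(I-P_N)E(T-s)\|_{\mathcal{L}_2^0}\le C\lambda_N^{-a/2}(T-s)^{-\tfrac{a+1-\gamma}{2}}$, whence the integral above is at most $C\lambda_N^{-a/2}\int_0^T(T-s)^{-\tfrac a2+\gamma-1}\dd s$, which is finite precisely when $a<2\gamma$. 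Choosing $a=2\gamma-2\epsilon$ gives $|\E[\Phi'(X^N(T))\delta]|\le C\lambda_N^{-\gamma+\epsilon}$, hence $|\mathrm{II}|\le C\lambda_N^{-\gamma+\epsilon}$; adding the two bounds completes the proof.

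The main obstacle is the first order term $\E[\Phi'(X^N(T))\delta]$ in $\mathrm{II}$: a crude estimate only controls $\|\delta\|_{L^2(\Omega,H)}$, which is merely of the strong order $\lambda_N^{-\gamma/2}$, and it is precisely the Malliavin integration by parts, together with the sharp regularity $\E\|\mathcal{D}_s X^N(T)\|_{\mathcal{L}_2^0}^2\le C(T-s)^{\gamma-1}$ of Proposition~\ref{Estimate of Malliavin derivative of the solution} and the balancing of the two time singularities under the constraint $a<2\gamma$, that upgrades it to $\lambda_N^{-\gamma+\epsilon}$. A secondary, more technical, difficulty is the negative-norm plus singular-Gronwall control of $\|\bar X^N(T)-P_N\bar X(T)\|_{L^2(\Omega,H)}$ appearing in $\mathrm{I}$, which is what prevents the term $(I-P_N)\mathcal{O}_r$ sitting inside the nonlinearity from pinning the spatial error at the strong rate $\lambda_N^{-\gamma/2}$.
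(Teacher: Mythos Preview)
Your decomposition into $\mathrm{I}$ and $\mathrm{II}$, the second-order Taylor expansion of $\mathrm{II}$, and the Malliavin integration-by-parts treatment of the linear term $\E[\Phi'(X^N(T))\delta]$ (including the factorization and the choice $a=2\gamma-2\epsilon$) coincide with the paper's proof. Your bound on $\|(I-P_N)\bar X(T)\|_{L^2(\Omega,H)}$ via the enhanced spatial regularity of $\bar X(T)$ is also the same.

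The difference, and the gap, is in how you control $e_1(t):=\bar X^N(t)-P_N\bar X(t)$. You propose to stay in the mild formulation, estimate $F(X^N(r))-F(X(r))$ in $\dot H^{-\eta}$ via Lemma~\ref{lemma;F} with the interpolant $\varsigma_{r,s}$, and then run a singular Gronwall ``closed in $L^p(\Omega,H)$ for every $p$''. The problem is that Lemma~\ref{lemma;F} produces the random prefactor $1+\max\{\|\varsigma_{r,s}\|_V,\|\varsigma_{r,s}\|_\theta\}^2$ \emph{multiplying} $\|e_1(r)\|_{-\theta}$ inside the integral inequality. A pathwise singular Gronwall then yields a bound with an exponential of $\int_0^T\|\varsigma_r\|_V^{2q}\,\dd r$ (for some $q$ depending on $\eta$), and taking $L^p(\Omega)$-moments of this requires exponential moments of $\|X(r)\|_V$, $\|X^N(r)\|_V$ that are nowhere established. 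Equivalently, if you take $L^p(\Omega)$ first and pull out the prefactor by H\"older, each step raises the integrability index, so the argument never closes in a fixed $L^p$.

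The paper avoids this by switching to an energy estimate for $e_1$: it differentiates $\tfrac12\|e_1(t)\|^2$, splits
\[
F(X^N(t))-F(X(t))=\big[F(\bar X^N(t)+\mathcal O_t^N)-F(P_N\bar X(t)+\mathcal O_t^N)\big]+\big[F(P_NX(t))-F(X(t))\big],
\]
and applies the one-sided Lipschitz condition \eqref{eq:F-one-sided-condition} to the first bracket, obtaining $\langle e_1,F(X^N)-F(P_NX)\rangle\le\|e_1\|^2$ with a \emph{deterministic} constant. The random growth factor then appears only in the \emph{source} term $\|F(P_NX(t))-F(X(t))\|_{-1}$, which is bounded by Lemma~\ref{lemma;F} and absorbed against $\|A_N^{1/2}e_1\|^2$ coming from $-\langle e_1,A_Ne_1\rangle$. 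A standard (non-singular) Gronwall in $L^2(\Omega)$ then gives $\|e_1(T)\|_{L^2(\Omega,H)}\le C\lambda_N^{-\gamma+\epsilon}$. In short, the one-sided Lipschitz structure is exactly what lets the paper close the estimate without exponential moments; your mild-form approach discards this structure and the Gronwall step does not go through as written.
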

\begin{proof}
Firstly, we denote
$\bar{X}(t):=X(t)-\mathcal{O}_t$
and
$\bar{X}^N(t):=X^N(t)-\mathcal{O}^N_t$.
%
Then we can separate the weak error term $\E\big[\Phi(X(T))\big]
-\E \big[\Phi(X^N(T))\big]$ as
\begin{equation}\label{full:separation}
\begin{split}
\E\big[\Phi(X(T))\big]
-\E \big[\Phi(X^N(T))\big]
&=
\Big(\E\big[\Phi(\bar{X}(T)+\mathcal{O}_T)\big]
-\E \big[\Phi(\bar{X}^N(T)+\mathcal{O}_T)\big]\Big) \\
& \quad +
\Big(\E\big[\Phi(\bar{X}^N(T)+\mathcal{O}_T)\big]
-\E \big[\Phi(\bar{X}^N(T)+\mathcal{O}_T^N)\big]\Big)
\\
&=:I_1+I_2.
\end{split}
\end{equation}
Next, we bound $|I_2|$ by the second-order Taylor expansion,
\begin{align}\label{full;I_2}
\begin{split}
|I_2|
&=
\Big| \E \Big[ \Phi^{'} (X^N(T))(\mathcal{O}_T-\mathcal{O}_T^N)
\\&  \qquad \qquad \qquad +
\int_0^1 \Phi^{''}(X^N(T)+\lambda(\mathcal{O}_T-\mathcal{O}_T^N))
(\mathcal{O}_T-\mathcal{O}_T^N,\mathcal{O}_T-\mathcal{O}_T^N)
(1-\lambda)\dd \lambda
\Big]
\Big|
\\
&\leq \Big|
\E
\big[
\Phi' (X^N(T))(I-P_N)\mathcal{O}_T
\big]
\Big|
+
C \,\E
\big[
\| \mathcal{O}_T-\mathcal{O}^N_T
\|^2
\big]
.
\end{split}
\end{align}
By utilizing \eqref{regularity;O_s} and \eqref{estimate:P_N-I}, we follow standard arguments  to derive
\begin{align}
\label{O_t-O_t^N}
\begin{split}
\E
\big[
\big\|
\mathcal{O}_T-\mathcal{O}^N_T
\big\|^2
\big]
&=
\E
\big[
\big\|
(I-P_N)
\mathcal{O}_T
\big\|^2
\big]
\leq C \lambda_N^{-\gamma}.
\end{split}
\end{align}
Employing Proposition
\ref{Estimate of Malliavin derivative of the solution},
the Malliavin integration by parts formula
\eqref{Malliavin integration by parts}
and the chain rule of the Malliavin derivative enables us to obtain
\begin{align}
\begin{split}
\Big|
\E
\big[&
\Phi' (X^N(T))(I-P_N)\mathcal{O}_T
\big]
\Big|
=
\Big|
\E
\int_0^T
\left<
(I-P_N)E(T-s)
,
\mathcal{D}_s \Phi^{'} (X^N(T))
\right>_{\mathcal{L}_2^0}
\dd s
\Big|
\\
&\leq
\E
\int_0^T
\big\|
(I-P_N)
A^{-\gamma+\epsilon}
A^{\tfrac{1+\gamma}{2}-\epsilon}
E(T-s)
A^{\tfrac{\gamma-1}{2}}
\big\|_{\mathcal{L}_2^0}
\|
\Phi^{''} (X^N(T))
\|_{\mathcal{L}(H)}
\| \mathcal{D}_s X^N(T) \|_{\mathcal{L}_2^0}
\dd s
\\&\leq C
\int_0^T
\|(I-P_N)A^{-\gamma+\epsilon}\|_{\mathcal{L}(H)} \|A^{\tfrac{1+\gamma}{2}-\epsilon} E(T-s)\|_{\mathcal{L}(H)}
\|A^{\tfrac{\gamma-1}{2}}Q^{\tfrac{1}{2}}\|_{\mathcal{L}_2(H)}
(T-s)^{\tfrac{\gamma-1}{2}}
\dd s
\\&
\leq
C
\lambda_N^{-\gamma+\epsilon}
\int_0^T (T-s)^{-1+\epsilon}\dd s
\leq
C
\lambda_N^{-\gamma+\epsilon}.
\end{split}
\end{align}
At the moment, it remains to bound $|I_1|$.
Since $\Phi \in C_b^2(H,\R)$,
\begin{align}\label{full;I_1}
\begin{split}
|I_1|
&=
\Big|
\E
\big[
\Phi(\bar{X}(T)+\mathcal{O}_T)
\big]
-
\E
\big[
\Phi(\bar{X}^N(T)+\mathcal{O}_T)
\big]
\Big|
\\
&
\leq
C \,
\E
\big[
\big\|
\bar{X}(T)-\bar{X}^N(T)
\big\|
\big]
\leq C\,
\|
e_1(T)
\|_{L^2(\Omega,H)}
+
C
\|
e_2(T)
\|_{L^2(\Omega,H)},
\end{split}
\end{align}
where
\begin{equation}
e_1(T):=\bar{X}^N(t)-P_N \bar{X}(t)
\quad
\text{and}
\quad
e_2(t):= P_N \bar{X}(t)- \bar{X}(t).
\end{equation}
Owing to \eqref{eq:thm-wellposed-V}, it is easy to see,
\begin{align}
\begin{split}
\Big\|
A^{\gamma-\epsilon}
\bar{X}(T)
\Big\|_{L^2(\Omega,H)}
&\leq
\Big\|
A^{\gamma-\epsilon}
E(T)X_0
+
\int_0^T
A^{\gamma-\epsilon}
E(T-s)
F(X(s)) \dd s
\Big\|_{L^2(\Omega,H)}
\\&
\leq C
\Big(
\| X_0 \|
+
\int_0^T
(T-s)^{-\gamma+\epsilon}
 \dd s
\sup_{s\in[0,T]}
\| F(X(s))  \|_{L^2(\Omega,H)}
\Big)
\\&
\leq
C
\Big(
1+
\sup_{s\in[0,T]}
\| X(s) - [X(s)]^3  \|_{L^2(\Omega,H)}
\Big)
\\&
\leq
C
\Big(
1+
\sup_{s\in[0,T]}
\| X(s) \|_{L^2(\Omega,H)}
+
\sup_{s\in[0,T]}
\| X(s) \|_{L^6(\Omega,L^6)}^3
\Big)
\\&
\leq
C
\Big(
1+
\sup_{s\in[0,T]}
\| X(s)  \|_{L^6(\Omega,V)}^3
\Big)
<
\infty.
\end{split}
\end{align}
Then, the error term $\|e_2(T)\|_{L^2(\Omega,H)}$ can be easily controlled due to the regularity of $\bar{X}(T)$,
\begin{align}\label{eq:e_2-sstimate}
\begin{split}
\|e_2(T)\|_{L^2(\Omega,H)}
=
\Big\|
(P_N-I)
A^{-\gamma+\epsilon}
A^{\gamma-\epsilon}
\bar{X}(T)
\Big\|_{L^2(\Omega,H)}
\leq C \lambda_N^{-\gamma+\epsilon}.
\end{split}
\end{align}
Finally, we turn to the error term
$\|e_1(T)\|_{L^2(\Omega,H)}$,
where $e_1(t)$ is differentiable with respect to $t$,
\begin{align}
\begin{split}
\tfrac{\dd}{\dd t}
{e}_1(t)
&
=
-A_N {e}_1(t)
+
P_N [
F(\bar{X}^N(t)+\mathcal{O}^N_t)
-F(\bar{X}(t)+\mathcal{O}_t)
].
\end{split}
\end{align}
Therefore,
\begin{align}
\begin{split}
&
\tfrac{\dd }{\dd t}
\|e_1(t)\|^2
+
2
\left<
e_1(t)
,
A_N e_1(t)
\right>
\\
&
\quad
=
2
\langle
F(\bar{X}^N(t)+\mathcal{O}^N_t)
-
F(P_N \bar{X}(t)+\mathcal{O}^N_t)
,
e_1(t)
\rangle
+
2
\langle
F(P_N X(t))
-
F(X(t))
,
e_1(t)
\rangle
\\
&
\quad
\leq
2
\|e_1(t)\|^2
+
\|
A_N^{-\tfrac12}
\big(F(P_N X(t))
-F(X(t))\big)
\|^2
+
\|
A_N^{\tfrac12} e_1(t)
\|^2.
\end{split}
\end{align}
Then  integrating over $[0,T]$ we get
\begin{align}
\|e_1(T)\|^2_{L^2(\Omega,H)}
\leq
C
\int_0^T
\|e_1(t)\|^2_{L^2(\Omega,H)}
\dd t
+
C
\int_0^T
\big\|
F(P_N X(t))-F(X(t))
\big\|_{L^2(\Omega,\dot{H}^{-1})}^2
\dd t.
\end{align}
Based on the Taylor expansion, Lemmas
\ref{a priori estimate P_N X(t)},
\ref{lemma;F}
and H\"{o}lder's inequality we get
\begin{align}
\begin{split}
\big\|
&F( P_N X(t))-F(X(t))
\big\|_{L^2(\Omega,\dot{H}^{-1})}
\\&
\leq
\int_0^1
\Big\|
F'\big(
X(t)+\lambda \big(P_N X(t)-X(t)\big)
\big)
(P_N X(t)-X(t))
\Big\|_{L^2(\Omega,\dot{H}^{-1})}
\dd\lambda
\\&
\leq
\Big(
1
+
\text{max}
\big\{
\|X(t)\|_{L^{8}(\Omega,V)}^2
,
\|P_N X(t)\|_{L^{8}(\Omega,V)}^2
,
\|X(t)\|_{L^{8}(\Omega,\dot{H}^{\gamma})}^2
\big\}
\Big)
\big\|
P_N X(t)-X(t)
\big\|_{L^{4}(\Omega,\dot{H}^{-\gamma+2\epsilon})}
\\&
\leq C \,
\big\|
(P_N-I)
A^{-\gamma+\epsilon}
A^{\tfrac{\gamma}{2}}X(t)
\big\|_{L^{4}(\Omega,H)}
\\&
\leq
C
\lambda_N^{-\gamma+\epsilon}
\|X(t)\|_{L^{4}(\Omega,\dot{H}^{\gamma})}
\leq C \lambda_N^{-\gamma+\epsilon}.
\end{split}
\end{align}
Using Gronwall's inequality one can arrive at
\begin{align}\label{e_1-estimate}
\|e_1(T)\|_{L^{2}(\Omega,H)}
\leq
C
\lambda_N^{-\gamma+\epsilon}.
\end{align}
This in combination with \eqref{full:separation}-\eqref{eq:e_2-sstimate} completes the proof.
\end{proof}

\begin{corollary}\label{cor:strong-spatial}
Under Assumptions \ref{ass:A-condition}-\ref{ass:X0}, \ref{ass:initial-value2}, there exists a generic constant such that for any $N \in \N$ and $\gamma \in (0,1]$,
\begin{equation}
\sup_{t \in [0,T]}
\|X(t)-X^N(t)\|_{L^2(\Omega,H)}
\leq
C \, \lambda_N^{-\tfrac{\gamma}{2}}.
\end{equation}
\end{corollary}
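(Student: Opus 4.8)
The plan is to derive the strong error bound as a by-product of the weak error analysis, following the decomposition already introduced in the proof of Theorem \ref{The;weak covergence}. First I would split the strong error using the auxiliary processes $\bar{X}(t) = X(t) - \mathcal{O}_t$ and $\bar{X}^N(t) = X^N(t) - \mathcal{O}^N_t$, writing
\begin{align*}
\|X(t) - X^N(t)\|_{L^2(\Omega,H)}
\leq
\|\bar{X}(t) - \bar{X}^N(t)\|_{L^2(\Omega,H)}
+
\|\mathcal{O}_t - \mathcal{O}^N_t\|_{L^2(\Omega,H)}.
\end{align*}
The second term is controlled by \eqref{O_t-O_t^N}, which gives $\|\mathcal{O}_t - \mathcal{O}^N_t\|_{L^2(\Omega,H)} = \|(I-P_N)\mathcal{O}_t\| \leq C\lambda_N^{-\gamma/2}$ uniformly in $t$ (note the exponent: the $L^2(\Omega,H)$-norm, not its square, picks up $\lambda_N^{-\gamma/2}$).

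For the first term, I would further split $\bar{X}(t) - \bar{X}^N(t) = e_1(t) + e_2(t)$ with $e_1(t) = \bar{X}^N(t) - P_N\bar{X}(t)$ and $e_2(t) = P_N\bar{X}(t) - \bar{X}(t)$, exactly as in the proof above. The term $\|e_2(t)\|_{L^2(\Omega,H)}$ is handled by the enhanced spatial regularity of $\bar{X}(t)$: since $A^{\gamma-\epsilon}\bar{X}(t) \in L^2(\Omega,H)$ uniformly in $t$ by \eqref{eq:thm-wellposed-V} and the semigroup bounds \eqref{regularity of semigroup}, we get $\|e_2(t)\| \leq C\lambda_N^{-\gamma+\epsilon}$, which is of higher order than $\lambda_N^{-\gamma/2}$ and hence negligible. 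For $\|e_1(t)\|_{L^2(\Omega,H)}$ I would reuse the differential inequality for $\tfrac{\dd}{\dd t}\|e_1(t)\|^2$ established in the proof of Theorem \ref{The;weak covergence}, combined with the one-sided Lipschitz structure \eqref{eq:F-one-sided-condition}, the estimate of $\|F(P_NX(t)) - F(X(t))\|_{L^2(\Omega,\dot{H}^{-1})}$ via Lemma \ref{lemma;F} and Lemma \ref{a priori estimate P_N X(t)}, and Gronwall's inequality, yielding $\|e_1(t)\|_{L^2(\Omega,H)} \leq C\lambda_N^{-\gamma+\epsilon}$, again of higher order.

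Collecting the three contributions, the dominant term is $\|\mathcal{O}_t - \mathcal{O}^N_t\|_{L^2(\Omega,H)} \leq C\lambda_N^{-\gamma/2}$, so $\sup_{t\in[0,T]}\|X(t)-X^N(t)\|_{L^2(\Omega,H)} \leq C\lambda_N^{-\gamma/2}$, as claimed. In fact there is essentially nothing new to prove here: all the ingredients have already been assembled inside the proof of Theorem \ref{The;weak covergence}, and the corollary is just the observation that the strong error is bounded by the sum of the strong-type quantities $\|e_1\|$, $\|e_2\|$ and $\|\mathcal{O}_t - \mathcal{O}^N_t\|$ appearing there, without the extra squaring that the weak analysis exploited for the second-order Taylor term. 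The only point requiring a little care — and the closest thing to an obstacle — is tracking the exponent correctly: the stochastic convolution error is $O(\lambda_N^{-\gamma/2})$ in $L^2(\Omega,H)$ (whereas its square, which is what entered the weak estimate of $I_2$, is $O(\lambda_N^{-\gamma})$), and this is precisely why the strong rate is half the weak rate.
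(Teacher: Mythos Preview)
Your proposal is correct and follows essentially the same approach as the paper: the paper's own proof is a two-line argument that applies the triangle inequality to $\|X(t)-X^N(t)\|_{L^2(\Omega,H)} \leq \|\bar{X}(t)-\bar{X}^N(t)\|_{L^2(\Omega,H)} + \|\mathcal{O}_t-\mathcal{O}^N_t\|_{L^2(\Omega,H)}$ and then invokes the already-established estimates \eqref{O_t-O_t^N}, \eqref{eq:e_2-sstimate}, \eqref{e_1-estimate} from the proof of Theorem \ref{The;weak covergence}. Your version simply unpacks these references in more detail, including the further split $\bar{X}-\bar{X}^N=e_1+e_2$ and the observation that the dominant contribution is the stochastic convolution term of order $\lambda_N^{-\gamma/2}$.
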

\begin{proof}
By \eqref{O_t-O_t^N}, \eqref{e_1-estimate},
 \eqref{eq:e_2-sstimate} and the triangle inequality, we deduce
\begin{equation}
\|X(t)-X^N(t)\|_{L^2(\Omega,H)}
\leq
\|\bar{X}(t)-\bar{X}^N(t)\|_{L^2(\Omega,H)}
+
\|\mathcal{O}_t-\mathcal{O}^N_t\|_{L^2(\Omega,H)}
\leq
C \, \lambda_N^{-\tfrac{\gamma}{2}},
\end{equation}
as required.
\end{proof}

\section{Error estimates for the full discretization}
\label{sec:full discretization}
%
In this section, we turn our attention to the strong and weak convergence analysis for a spatio-temporal full discretization.
A uniform mesh is constructed  on [0,T] with the time stepsize $\tau=\tfrac{T}{M}$ and we denote the nodes $t_m=m\tau$
for $m\in\{1,\ldots,M\},  M\in\mathbb{N}$. Before introducing the full discretization, we need the following notation:
\begin{equation}
\lfloor {t} \rfloor_{\tau}
:=t_i \quad
\text{for} \quad t \in [t_i ,t_{i+1}),\quad
\Delta W_i=W(t_{i+1})-W(t_{i}),
\quad
i \in \{0,1,\ldots , M-1\}.
\end{equation}
Based on the spatial semi-discretization \eqref{mild-spatial}, we propose a tamed exponential Euler scheme,
\begin{equation}
X_{t_{m+1}}^{M,N}=E_N (\tau)X_{t_m}^{M,N} +\tfrac{\tau E_N (\tau)P_N F (X_{t_m}^{M,N})}
{1+\tau \| P_N F (X_{t_m}^{M,N}) \|}
+E_N (\tau)P_N \Delta W_m.
\end{equation}
Particularly, we pay more attention to its continuous version,
\begin{equation}\label{full;continuous version}
X_{t}^{M,N}=E_N (t)X_0^{M,N} +\int_0^t \tfrac{E_N (t- \lfloor {s} \rfloor_{\tau})P_N F (X_{\lfloor {s} \rfloor_{\tau}}^{M,N})}
{1+\tau \| P_N F (X_{\lfloor {s} \rfloor_{\tau}}^{M,N}) \|} \dd s
+\int_0^t E_N (t-\lfloor {s} \rfloor_{\tau})P_N \dd W(s),
\end{equation}
where $X_t^{M,N}$ is $\mathcal{F}_t$-adapted random variable.
Likewise, we denote
$\mathcal{O}_t^{M,N}:=\int_0^t E_N (t-\lfloor {s} \rfloor_{\tau})P_N \dd W(s)$ for convenience.
The following lemma is concerned with the spatial regularity of $\mathcal{O}_t^{M,N}$.
\begin{lemma}\label{lem;O_t^{M,N}}
Suppose Assumptions \ref{ass:A-condition} and \ref{assum:eq-noise} are satisfied, then for any $p \geq 2$ the discreted stochastic convolution
$\{\mathcal{O}_t^{M,N}\}_{t\in[0,T]}$ satisfies
\begin{equation}\label{O_t^{M,N}}
\sup_{t\in[0,T],M,N\in\N}\|\mathcal{O}_t^{M,N}\|_{L^p(\Omega,V)} <\infty.
\end{equation}
\end{lemma}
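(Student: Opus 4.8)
The plan is to reduce the bound on $\|\mathcal{O}_t^{M,N}\|_{L^p(\Omega,V)}$ to a bound on a fractional Sobolev norm $\|\mathcal{O}_t^{M,N}\|_{L^p(\Omega,\dot{H}^\delta)}$ for some $\delta>\tfrac12$, invoking the one-dimensional Sobolev embedding $\dot{H}^\delta \hookrightarrow V = C(\mathcal{I},\R)$, and then to estimate the latter via the factorization (stochastic Fubini / Da Prato--Kwapie\'n--Zabczyk) method together with the $\mathcal{L}_2^0$-bound in Assumption~\ref{assum:eq-noise}. First I would fix $p\geq 2$, pick $\epsilon\in(0,\gamma)$ small and $\delta\in(\tfrac12,\tfrac12+\tfrac{\gamma-\epsilon}{2})$ depending on $\gamma$, and write
\[
\mathcal{O}_t^{M,N}
=\int_0^t E_N(t-\lfloor s\rfloor_\tau)P_N\,\dd W(s)
=\frac{\sin(\pi\rho)}{\pi}\int_0^t (t-r)^{\rho-1}E_N(t-r)Y_\rho(r)\,\dd r,
\]
with $\rho\in(0,\tfrac12)$ and $Y_\rho(r)=\int_0^r (r-u)^{-\rho}E_N(r-\lfloor u\rfloor_\tau)P_N\,\dd W(u)$, which is the standard factorization identity; here one uses that $E_N$ commutes with $P_N$ and with $E_N(t-\lfloor r\rfloor_\tau)$, so the perturbed kernel can be reassembled exactly as in the un-tamed stochastic convolution. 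Applying $A^{\delta/2}$, H\"older's inequality in $r$, and the smoothing bound $\|A^{\delta/2}E_N(t-r)\|_{\mathcal L(H)}\leq C(t-r)^{-\delta/2}$ (which follows from \eqref{regularity of semigroup} restricted to $H_N$), one reduces matters to showing $\sup_{r\in[0,T]}\E\big[\|Y_\rho(r)\|^p\big]<\infty$.

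For the remaining moment bound on $Y_\rho$, I would apply the Burkholder--Davis--Gundy inequality for Hilbert-space-valued stochastic integrals, giving
\[
\E\big[\|Y_\rho(r)\|^p\big]
\leq C_p\,\E\Big[\Big(\int_0^r (r-u)^{-2\rho}\big\|E_N(r-\lfloor u\rfloor_\tau)P_N\big\|_{\mathcal L_2^0}^2\,\dd u\Big)^{p/2}\Big],
\]
and then estimate $\|E_N(r-\lfloor u\rfloor_\tau)P_N\|_{\mathcal L_2^0}$ by inserting $A^{(1-\gamma)/2}$: one writes $E_N(r-\lfloor u\rfloor_\tau)P_N Q^{1/2}=A^{(1-\gamma)/2}E_N(r-\lfloor u\rfloor_\tau)\cdot A^{(\gamma-1)/2}P_N Q^{1/2}$, bounds the first factor in $\mathcal L(H)$ by $C(r-\lfloor u\rfloor_\tau)^{-(1-\gamma)/2}$ and the second in $\mathcal L_2(H)$ by $\|A^{(\gamma-1)/2}Q^{1/2}\|_{\mathcal L_2(H)}<\infty$ (using that $P_N$ is a contraction commuting with $A$). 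Since $r-\lfloor u\rfloor_\tau\geq r-u$, this yields $\|E_N(r-\lfloor u\rfloor_\tau)P_N\|_{\mathcal L_2^0}^2\leq C(r-u)^{\gamma-1}$, and the $u$-integral $\int_0^r (r-u)^{-2\rho+\gamma-1}\,\dd u$ is finite provided $2\rho<\gamma$. Choosing $\rho$ with $\tfrac12-\tfrac{\delta-1/2}{?}<\rho<\tfrac{\gamma}{2}$ — concretely $\rho\in(\tfrac{\delta}{2}+?,\tfrac{\gamma}{2})$ reconciled with the earlier constraint $\rho-1-\delta/2>-1$ from the H\"older step — closes the argument; a short computation shows the admissible window for $\rho$ is nonempty exactly because $\delta<\tfrac12+\tfrac{\gamma}{2}$ and hence $\delta/2<\tfrac14+\tfrac{\gamma}{4}<\tfrac{\gamma}{2}$ fails in general, so one must instead balance $\delta$ close to $\tfrac12$ and keep $\rho$ just below $\gamma/2$, which is feasible for every $\gamma\in(0,1]$.

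The main obstacle is the low temporal regularity encoded by the piecewise-constant argument $\lfloor s\rfloor_\tau$: one must make sure the factorization identity still holds \emph{exactly} for the discretized convolution (it does, thanks to the semigroup property of $E_N$ and commutativity with $P_N$), and one must verify that replacing $r-\lfloor u\rfloor_\tau$ by $r-u$ in the kernel estimate loses nothing — which is fine since $E_N$ is a bounded analytic semigroup and $r-\lfloor u\rfloor_\tau\in[r-u,\,r-u+\tau]$, so $(r-\lfloor u\rfloor_\tau)^{-(1-\gamma)/2}\leq (r-u)^{-(1-\gamma)/2}$. As the statement's proof sketch in Lemma~\ref{lem;O_t^N} suggests, for $\gamma\in(0,\tfrac12]$ one may alternatively quote the Galerkin stochastic-convolution bound of \cite{Arnulf2013Galerkin} (using that $A$ commutes with $Q$) applied to the perturbed kernel, and for $\gamma\in(\tfrac12,1]$ the direct factorization argument above combined with the Sobolev embedding $\dot H^\delta\hookrightarrow V$ suffices; either route gives the claimed uniform bound in $t$, $M$ and $N$.
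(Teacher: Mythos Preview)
Your factorization route is sound for $\gamma\in(\tfrac12,1]$, but it cannot close for $\gamma\in(0,\tfrac12]$, and that is where the real gap is. The two constraints you need are $\rho<\gamma/2$ (so that $\int_0^r(r-u)^{-2\rho+\gamma-1}\,\dd u<\infty$) and, from the H\"older step after applying $A^{\delta/2}$, essentially $\rho>\delta/2$. The Sobolev embedding $\dot H^\delta\hookrightarrow V$ forces $\delta>\tfrac12$, hence $\rho>\tfrac14$, while $\rho<\gamma/2\leq\tfrac14$ whenever $\gamma\leq\tfrac12$: the admissible window is empty. Your own parenthetical concedes this (``fails in general''), yet the subsequent claim that balancing $\delta$ near $\tfrac12$ makes it ``feasible for every $\gamma\in(0,1]$'' is incorrect --- no choice of parameters rescues the case $\gamma\leq\tfrac12$, because the convolution genuinely has at most $\dot H^\gamma$ spatial regularity.

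The fallback of invoking \cite{Arnulf2013Galerkin} is also insufficient as stated: that result concerns the spatially semi-discrete process $\mathcal O_t^N$, not the temporally discretized $\mathcal O_t^{M,N}$, and the piecewise-constant time argument $\lfloor s\rfloor_\tau$ is precisely what must be controlled here. The paper supplies the missing argument directly, exploiting the assumed commutativity of $A$ and $Q$ for $\gamma\leq\tfrac12$: writing $W(t)=\sum_i\sqrt{q_i}\,\beta_i(t)e_i$ with common eigenfunctions $e_i(x)=\sqrt2\sin(i\pi x)$, one estimates
\[
\E\big[|\mathcal O_t^{M,N}(x)-\mathcal O_t^{M,N}(y)|^2\big]\leq C\,|x-y|^{2\gamma}
\]
via the elementary bound $|e_i(x)-e_i(y)|^2\leq C\,i^{2\gamma}|x-y|^{2\gamma}(|e_i(x)|+|e_i(y)|)^{2(1-\gamma)}$ together with $\int_0^t e^{-2\lambda_i(t-\lfloor r\rfloor_\tau)}\,\dd r\leq C\lambda_i^{-1}$, and then uses Gaussianity plus the Sobolev--Slobodeckij embedding $W^{\gamma/2,p}\hookrightarrow V$ for $p>2/\gamma$ to conclude. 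This pointwise H\"older estimate is the key idea your proposal lacks in the low-regularity regime.
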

\begin{proof}
In the case $\gamma\in(\tfrac12,1]$, the assertion follows easily thanks to the Sobolev embedding inequality.
For $\gamma\in(0,\tfrac12]$,
let $\beta_i:[0,T] \times \Omega \rightarrow \R$ be a family of independent standard Brownian motions and
$\{q_i,e_i\}_{i\in \N}$ be the eigenpairs of covariance operator $Q$.
Here $\{e_i\}_{i\in \N}$ is also the eigenfunction of $A$ as $A$ and $Q$ commute by assumption.
Then we have $W(t)=\sum_{i\in \N}\sqrt{q_i}\beta_i(t)e_i$.
Using \eqref{eq:A-Q-condition} and the Burkholder-Davis-Gundy inequality yields
\begin{align}\label{formula1}
\begin{split}
\E\big[\big| \mathcal O_t^{M,N}(x)
   -
   \mathcal O_t^{M,N}(y)\big|^2\big]
   &\leq
   \sum_{i\in \N}
   q_i~
   \E\Big[
   \Big|
   \int_0^t e^{-\lambda_i(t-\lfloor s \rfloor_{\tau})}
   \dd
   \beta_{i}(s)
   \Big|^2
   \Big]
   \big|e_i(x)-e_i(y)\big|^2
   \\&
   \leq C
   \sum_{i\in \N}
   q_i~
   \lambda_i^{-1}i^{2\gamma} | x-y |^{2\gamma}
   \big(
   |e_i(x)|+|e_i(y)|
   \big)^{2(1-\gamma)}
   \\&
   \leq C
   \big\|
   A^{\tfrac{\gamma-1}2}  Q^{\tfrac12}
   \big\|_{\mathcal L_2}^2
   | x-y |^{2\gamma}
   \\&
   \leq C |x-y|^{2\gamma},
   \qquad
  \forall x,y \in \mathcal I.
\end{split}
\end{align}
Taking $y=0$ in \eqref{formula1} yields
\begin{align}
\E
\big[
| \mathcal O_t^{M,N}(x) |^2
\big]
<\infty.
\end{align}
Letting $p>\tfrac 2\gamma $ for $\gamma \in (0,\tfrac12]$ and employing the Sobolev embedding inequality
$W^{\tfrac {\gamma}2 , p} \subset V$ give
\begin{equation}
\begin{split}
  \E \big[ \| \mathcal O_t^{M,N} \|_{V}^p \big]
 & \leq
   C  \int_0^1 \E \big[ | \mathcal O_t^{M,N}(x) |^p \big] \dd x
   +
   C \int_0^1 \int_0^1
   \frac{\E\big[\big| \mathcal O_t^{M,N}(x)
   -
   \mathcal O_t^{M,N}(y)\big|^p\big]}
   {|x-y|^{\tfrac {p\gamma}2+1}}
   \, \dd x \dd y
\\
 &\leq
 C  \int_0^1
 \big(
 \E \big[ | \mathcal O_t^{M,N}(x) |^2 \big]
 \big)^{\tfrac p2} \dd x
   +
   C \int_0^1 \int_0^1
   \frac{\big(\E\big[\big| \mathcal O_t^{M,N}(x)
   -
   \mathcal O_t^{M,N}(y)\big|^2\big]\big)^{\tfrac p2}}
   {|x-y|^{\tfrac {p\gamma}2+1}}
   \, \dd x \dd y
  \\
  &
   \leq
   C \Big(
   1
   +
   \int_0^1 \int_0^1 \big| x-y \big|^{\tfrac {p\gamma}2 -1}
   \dd x \dd y
   \Big)<\infty.
\end{split}
\end{equation}
This completes the proof.
\end{proof}
\subsection{Moment bounds for the full discretization}
\label{susec:moment bounds}
In the sequel, a certain bootstrap argument will be applied to obtain a priori moment estimate for the full discretization.
In order to show it, we construct a sequence of decreasing subevents,
\begin{equation}
\Omega_{B,t_i}:=\big\{
\omega \in \Omega :\sup\limits_{j\in\{0,1,\ldots,i\}}
 \| X_{t_j}^{M,N}(\omega) \|_V \leq B
\big\}, \quad \text{for} \quad
B \in (0,\infty),i\in \{0,1,\ldots,M\}.
\end{equation}
For brevity, we denote $\Omega^c$ and $\chi_{\Omega}$ the complement and indicator function of a set $\Omega$, respectively. It is easy to confirm that
$ \chi_{\Omega_{B,t_i}}$ is $ \mathcal{F}_{t_i} $ adapted and
$\chi_{\Omega_{B,t_i}} \leq \chi_{\Omega_{B,t_j}}$ for
$t_i \geq t_j$.
Particularly,
we set $\chi_{\Omega_{B,t_{-1}}}=1$.

\begin{lemma}\label{indication;a priori}
Let Assumptions \ref{ass:A-condition}-\ref{ass:X0}, \ref{ass:initial-value2} hold and let $p \in [2,\infty)$,
$B_\tau :=\tau^{-\text{min}\{\tfrac{\gamma}{4},\tfrac3{20}\}}$
for
$\gamma \in (0,1]$ coming from \eqref{eq:A-Q-condition}.
Then the numerical approximations $X_{t_i}^{M,N},i \in \{0,1,\ldots,M\}$ satisfy
\begin{equation}
\sup\limits_{M,N \in \N} \sup\limits_{i \in \{0,1,\ldots,M\}}
\E \big[
\chi_{\Omega_{B_\tau,t_{i-1}}} \| X_{t_i}^{M,N} \|_V^p
\big] < \infty.
\end{equation}
\end{lemma}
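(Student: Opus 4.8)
The natural route is to imitate the proof of Theorem~\ref{th:spatial-bound}: peel off the stochastic convolution and the initial‑data contribution and apply the deterministic a priori bound of Proposition~\ref{lem;deterministic} to the remaining ``nonlinear part'', but carried out on the events $\Omega_{B_\tau,t_{i-1}}$ so that the explicit time‑stepping and the taming can be treated as controlled perturbations. Concretely, set $Z^{M,N}_t:=E_N(t)X_0^{M,N}+\mathcal{O}^{M,N}_t$ and $\hat{X}^{M,N}_t:=X^{M,N}_t-Z^{M,N}_t$, so that the continuous version \eqref{full;continuous version} gives
\[
\hat{X}^{M,N}_t=\int_0^t\frac{E_N(t-\lfloor s\rfloor_\tau)\,P_N F(X^{M,N}_{\lfloor s\rfloor_\tau})}{1+\tau\|P_N F(X^{M,N}_{\lfloor s\rfloor_\tau})\|}\,\dd s .
\]
Since $E_N$ coincides with the heat semigroup on $H_N$ and is thus an $L^\infty$‑contraction, the term $E_N(t_i)X_0^{M,N}$ is bounded in $V$ by Assumption~\ref{ass:initial-value2}, while $\mathcal{O}^{M,N}_{t_i}$ is bounded in $L^p(\Omega,V)$ by Lemma~\ref{lem;O_t^{M,N}}; hence the whole claim reduces to a uniform bound on $\E[\chi_{\Omega_{B_\tau,t_{i-1}}}\|\hat{X}^{M,N}_{t_i}\|_V^p]$, the case $i=0$ (where $\chi_{\Omega_{B_\tau,t_{-1}}}=1$) being immediate from Assumption~\ref{ass:initial-value2}.

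On $\Omega_{B_\tau,t_{i-1}}$ one has $\|X^{M,N}_{t_j}\|_V\le B_\tau=\tau^{-\gamma/15}$ for all $j\le i-1$, and this has two consequences. First, the taming is, up to constants, inactive: $\tau\|P_N F(X^{M,N}_{t_j})\|\le C\tau(1+B_\tau^3)=C\tau^{\,1-\gamma/5}\le\tfrac12$ for $\tau$ small, so the denominator lies in $[1,2]$ and the tamed drift differs from $E_N(t-\lfloor s\rfloor_\tau)P_N F(X^{M,N}_{\lfloor s\rfloor_\tau})$ only by a factor $1+O(\tau^{1-\gamma/5})$. Second, the iterates and their continuous interpolation are pathwise bounded by a polynomial in $B_\tau$, which, together with the ensuing pathwise temporal control of $t\mapsto X^{M,N}_t$, lets one replace $X^{M,N}_{\lfloor s\rfloor_\tau}$ by $X^{M,N}_s=\hat{X}^{M,N}_s+Z^{M,N}_s$ and $E_N(t-\lfloor s\rfloor_\tau)$ by $E_N(t-s)$ at the cost of a perturbation of size $O(\tau^{\delta}\,\mathrm{poly}(B_\tau))$ for some $\delta>0$. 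Using the monotonicity $\chi_{\Omega_{B_\tau,t_{i-1}}}\le\chi_{\Omega_{B_\tau,t_j}}$ for $j\le i-1$ to pull the indicator inside the integral, $\chi_{\Omega_{B_\tau,t_{i-1}}}\hat{X}^{M,N}$ on $[0,t_i]$ is then a solution of \eqref{deterministic case} with forcing $w^N=Z^{M,N}$, perturbed by terms of size $O(\tau^{\delta}\,\mathrm{poly}(B_\tau))$; the exponent $\gamma/15$ in $B_\tau$ is chosen precisely so that $\tau^{\delta}\,\mathrm{poly}(B_\tau)$ remains bounded uniformly in $\tau$.

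Running the argument behind Proposition~\ref{lem;deterministic} on this perturbed equation then yields
$\chi_{\Omega_{B_\tau,t_{i-1}}}\|\hat{X}^{M,N}_{t_i}\|_V\le C\big(1+\|Z^{M,N}\|^{9}_{\mathbb{L}^9(\mathcal{I}\times[0,t_i])}\big)+O(\tau^{\delta}\,\mathrm{poly}(B_\tau))$.
Since $\|\cdot\|_{L^9(\mathcal{I},\R)}\le\|\cdot\|_V$ and $E_N$ is an $L^\infty$‑contraction, $\|Z^{M,N}\|_{\mathbb{L}^9(\mathcal{I}\times[0,t_i])}\le C\big(\|X_0^{M,N}\|_V+\sup_{t\in[0,T]}\|\mathcal{O}^{M,N}_t\|_V\big)$. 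Taking $L^p(\Omega)$‑norms, raising to the $p$‑th power and invoking Lemma~\ref{lem;O_t^{M,N}} together with Assumption~\ref{ass:initial-value2} delivers the asserted bound uniformly in $M,N$ and $i$.

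The main obstacle is the second step --- forcing the cubic nonlinearity to cooperate. A brute‑force estimate of the discrete variation‑of‑constants formula, bounding $\|P_N F(X^{M,N}_{t_j})\|$ crudely by $C(1+B_\tau^3)$ and summing, produces the unacceptable factor $\tau^{-\gamma/5}$ and destroys uniformity in $\tau$; one is therefore forced to route the nonlinearity through the dissipative a priori estimate of Proposition~\ref{lem;deterministic}, which in turn forces the whole computation onto the events $\Omega_{B_\tau,t_{i-1}}$ (the only place pathwise $V$‑bounds are available) and demands a careful quantitative accounting of how the taming correction, the semigroup lag $E_N(t-\lfloor s\rfloor_\tau)-E_N(t-s)$, and the frozen argument $X^{M,N}_{\lfloor s\rfloor_\tau}$ perturb that estimate. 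Balancing these perturbations against the smallness supplied by $\tau$ is exactly what pins down the exponent $\gamma/15$ in $B_\tau=\tau^{-\gamma/15}$.
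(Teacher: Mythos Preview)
Your proposal is essentially correct and identifies all the key ingredients (Proposition~\ref{lem;deterministic}, the restriction to $\Omega_{B_\tau,t_{i-1}}$, the three discretization artefacts, Lemma~\ref{lem;O_t^{M,N}}, and the balancing that fixes the exponent $\gamma/15$). The paper, however, organizes the same material more cleanly. Instead of your choice $Z^{M,N}_t=E_N(t)X_0^{M,N}+\mathcal{O}^{M,N}_t$, it sets
\[
Z^{M,N}_t:=E_N(t)X_0^{M,N}+\int_0^t\Big[\tfrac{E_N(t-\lfloor s\rfloor_\tau)P_NF(X^{M,N}_{\lfloor s\rfloor_\tau})}{1+\tau\|P_NF(X^{M,N}_{\lfloor s\rfloor_\tau})\|}-E_N(t-s)P_NF(X^{M,N}_s)\Big]\dd s+\mathcal{O}^{M,N}_t,
\]
so that $\hat{X}^{M,N}_t:=X^{M,N}_t-Z^{M,N}_t$ satisfies \eqref{deterministic case} \emph{exactly} with $w^N=Z^{M,N}$ and Proposition~\ref{lem;deterministic} applies as a black box. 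The three perturbation sources you list (taming correction, semigroup lag, frozen argument) then appear as the three terms $J_1,J_2,J_3$ in the $V$-norm estimate of $\chi_{\Omega_{B_\tau,t_{i-1}}}Z^{M,N}_s$, and are bounded separately on the subevent exactly as you describe. The advantage of the paper's decomposition is that it avoids your step ``running the argument behind Proposition~\ref{lem;deterministic} on this perturbed equation'', which would require re-opening the proof of that proposition (quoted from \cite{wang2018efficient}) and tracking how each perturbation propagates through it; absorbing the perturbations into the forcing $Z^{M,N}$ keeps Proposition~\ref{lem;deterministic} untouched and shifts all the work into pointwise $V$-bounds for a concrete integral. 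Your route should go through as well, but it is longer and less self-contained.
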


\begin{proof}
Firstly we introduce a process $Z_t^{M,N}$, defined by
\begin{equation*}
Z_t^{M,N}:=E_N (t)X_0^{M,N} +\int_0^t \tfrac{E_N (t- \lfloor {s} \rfloor_{\tau})P_N F (X_{\lfloor {s} \rfloor_{\tau}}^{M,N})}
{1+\tau \| P_N F (X_{\lfloor {s} \rfloor_{\tau}}^{M,N}) \|}-
E_N (t-s)P_N F(X_s^{M,N})\dd s
+\int_0^t E_N (t-\lfloor {s} \rfloor_{\tau})P_N \dd W(s).
\end{equation*}
With this and the continuous version of the full discretization \eqref{full;continuous version} we have
\begin{equation}\label{equ;Y_T,Z_T}
X_t^{M,N}=Z_t^{M,N}+\int_0^t E_N (t-s)P_N F(X_s^{M,N})\dd s =:Z_t^{M,N}+\hat{X}_t^{M,N} \quad \text{  with  }\quad
\hat{X}_0^{M,N}=0,
\end{equation}
where $\hat{X}_t^{M,N}=\int_0^t E_N (t-s)P_N F(\hat{X}_s^{M,N}+Z_s^{M,N}) \, \dd s $ obeys
\[
\tfrac{\dd}{\dd t} \hat{X}_t^{M,N}
=
-A_N\hat{X}_t^{M,N}+P_N F(\hat{X}_t^{M,N}+Z_t^{M,N}),
\quad
\hat{X}_0^{M,N} = 0.
\]
We apply Proposition \ref{lem;deterministic} to derive that for any $i\in\{1,2,\cdots,M\}$,
\begin{equation}\label{equ;estimate Y}
\begin{split}
\E
\big[\chi_{\Omega_{B_\tau,t_{i-1}}}\|\hat{X}_{t_i}^{M,N}\|_V^p
\big]
& \leq
C\big(
1+\E\big[\chi_{\Omega_{B_\tau,t_{i-1}}}\|Z^{M,N}\|_{
\mathbb{L}^{9p}(\mathcal I \times [0,t_i])}^{9p}
\big]
\big)
\\&\leq
C\Big(1+\E\Big[\chi_{\Omega_{B_\tau,t_{i-1}}}\int_0^{t_i}\|Z_s^{M,N}\|_{V}
^{9p}\dd s
\Big]
\Big).
\end{split}
\end{equation}
For $s \in [0,t_i],i \in \{0,1,\ldots,M\}$, one can separate the integrand into several parts:
\begin{equation}\label{equ;bound Z_V}
\begin{split}
\chi_{\Omega_{B_\tau,t_{i-1}}} \| Z_s^{M,N}\|_V
&\leq \|E_N (s)X_0^{M,N}\|_V+\Big\|\int_0^s E_N (s-\lfloor {r} \rfloor_{\tau})P_N \dd W(r)\Big\|_V
\\
&\quad +\chi_{\Omega_{B_\tau,t_{i-1}}}\Big\|
\int_0^s \tfrac{E_N (s- \lfloor {r} \rfloor_{\tau})P_N F (X_{\lfloor {r} \rfloor_{\tau}}^{M,N})}
{1+\tau \| P_N F (X_{\lfloor {r} \rfloor_{\tau}}^{M,N}) \|}-
E_N (s-r)P_N F(X_r^{M,N})\dd r
\Big\|_V
\\
&\leq \|E_N (s)X_0^{M,N}\|_V+\Big\|\int_0^s E_N (s-\lfloor {r} \rfloor_{\tau})P_N \dd W(r)\Big\|_V
\\
&\quad +\chi_{\Omega_{B_\tau,t_{i-1}}}\Big\|
\int_0^s \tfrac{E_N (s- \lfloor {r} \rfloor_{\tau})P_N F (X_{\lfloor {r} \rfloor_{\tau}}^{M,N})}
{1+\tau \| P_N F (X_{\lfloor {r} \rfloor_{\tau}}^{M,N}) \|}-
E_N (s-\lfloor {r} \rfloor_{\tau})P_N F(X_{\lfloor {r} \rfloor_{\tau}}^{M,N})\dd r
\Big\|_V
\\
&\quad +\chi_{\Omega_{B_\tau,t_{i-1}}}\Big\|
\int_0^s E_N (s-\lfloor {r} \rfloor_{\tau})P_N F(X_{\lfloor {r} \rfloor_{\tau}}^{M,N})-
E_N (s-\lfloor {r} \rfloor_{\tau})P_N F(X_r^{M,N})\dd r
\Big\|_V
\\
&\quad +\chi_{\Omega_{B_\tau,t_{i-1}}}\Big\|
\int_0^s E_N (s-\lfloor {r} \rfloor_{\tau})P_N F(X_r^{M,N})-
E_N (s-r)P_N F(X_r^{M,N}) \dd r
\Big\|_V
\\
&=: \|E_N (s)X_0^{M,N}\|_V+{\Big\|\int_0^s E_N (s-\lfloor {r} \rfloor_{\tau})P_N \dd W(r)\Big\|_V}
+J_1+J_2+J_3.
\end{split}
\end{equation}
The first term $J_1$ can be easily estimated according to the Sobolev embedding theorem,
\begin{equation}
\begin{split}
J_1 &\leq \chi_{\Omega_{B_\tau,t_{i-1}}}
\int_0^s \tau \cdot \| E_N (s- \lfloor {r} \rfloor_{\tau})P_N F (X_{\lfloor {r} \rfloor_{\tau}}^{M,N}) \|_V
\cdot \| P_N F(X_{\lfloor {r} \rfloor_{\tau}}^{M,N})\| \dd r
\\
&\leq \chi_{\Omega_{B_\tau,t_{i-1}}}
\tau \cdot \int_0^s (s-\lfloor {r} \rfloor_{\tau})^{-\tfrac{1}{2}} \cdot
\| P_N F(X_{\lfloor {r} \rfloor_{\tau}}^{M,N})\|^2 \dd r
\\
&\leq C \tau \big((B_\tau)^6+1 \big)
\\
&
<
\infty.
\end{split}
\end{equation}
Before proceeding further, by the same argument used in \cite{wang2018efficient}, we can show that for $r \in [0,t_i)$, $\| X_r^{M,N} \|_V$ can be bounded on the subevent $\Omega_{B_\tau,t_{i-1}}$,
\begin{equation}
\chi_{\Omega_{B_\tau,t_{i-1}}} \| X_r^{M,N} \|_V
\leq C
\big(
B_\tau +\tau^{\tfrac{3}{4}} (B_\tau)^3
+
\|\mathcal{O}^{M,N}_r\|_V
+
\|\mathcal{O}^{M,N}_{\lfloor {r} \rfloor_{\tau}}\|_V
 \big)
,\quad
\forall\, r \in [0,t_i)
.
\end{equation}
In order to deal with the third term $J_3$, we again use the Sobolev embedding inequality
$\dot{H}^{\tfrac23} \subset V$
for $\forall \, \epsilon >0$
 to infer,
\begin{equation}
\begin{split}
J_3 &\leq \chi_{\Omega_{B_\tau,t_{i-1}}}
\int_0^s \big\| E_N (s-r)[E_N(r- \lfloor {r} \rfloor_{\tau})-I]P_N F (X_r^{M,N}) \big\|_V
\dd r
\\
&\leq C \cdot \chi_{\Omega_{B_\tau,t_{i-1}}}
\int_0^s \big\|A^{\tfrac13} E_N (s-r)
[E_N(r- \lfloor {r} \rfloor_{\tau})-I]P_N F (X_r^{M,N})
\big\|
\dd r
\\
&\leq C \cdot \chi_{\Omega_{B_\tau,t_{i-1}}}
\int_0^s \big\|A^{\tfrac56} E (s-r)
A^{-\tfrac12}
[E(r- \lfloor {r} \rfloor_{\tau})-I]P_N F (X_r^{M,N}) \big\|
\dd r
\\
&\leq C \cdot \chi_{\Omega_{B_\tau,t_{i-1}}}
\int_0^s (s-r)^{-\tfrac56}
\cdot
\tau^{\tfrac12} \cdot
\| F (X_r^{M,N}) \| \dd r
\\
&
\leq C \cdot \tau^{\tfrac12}
\cdot
\int_0^s (s-r)^{-\tfrac56}
\big(
1+(B_\tau)^3+\tau^{\tfrac{9}{4}} (B_\tau)^9
+
\|\mathcal{O}^{M,N}_r\|_V^3
+
\|\mathcal{O}^{M,N}_{\lfloor {r} \rfloor_{\tau}}\|_V^3
\big)
\dd r.
\end{split}
\end{equation}
Therefore,  considering \eqref{O_t^{M,N}}
and
$B_\tau =\tau^{-\text{min}\{\tfrac{\gamma}{4},\tfrac3{20}\}}$
 one can further infer that
\begin{equation}
  \|J_3\|_{L^{9p}(\Omega,\R)}
 \leq C(X_0,T,p).
\end{equation}
Finally, following the treatment of $I_1$ in
\cite[Lemma 4.4]{wang2018efficient},
by the aid of \cite[Lemma 3.2]{wang2018efficient} with $\gamma=0$,
we treat the second term $J_2$ as follows:
\begin{equation}
\begin{split}
J_2 & \leq \chi_{\Omega_{B_\tau,t_{i-1}}}
\int_0^s \big\| E_N (s-\lfloor {r} \rfloor_{\tau})[P_N F (X_{\lfloor {r} \rfloor_{\tau}}^{M,N})-P_N F (X_r^{M,N})] \big\|_V
\dd r
\\
&\leq C \cdot \chi_{\Omega_{B_\tau,t_{i-1}}}
\int_0^s (s-\lfloor {r} \rfloor_{\tau})^{-\tfrac{1}{4}} \big\| P_N F (X_{\lfloor {r} \rfloor_{\tau}}^{M,N})-P_N F (X_r^{M,N}) \big\|
\dd r
\\
&\leq C \cdot  \chi_{\Omega_{B_\tau,t_{i-1}}}
\int_0^s (s-\lfloor {r} \rfloor_{\tau})^{-\tfrac{1}{4}}
 \big(
 1+(B_\tau)^2+\tau^{\tfrac{3}{2}}(B_\tau)^6
 +
\|\mathcal{O}^{M,N}_r\|_V^2
+
\|\mathcal{O}^{M,N}_{\lfloor {r} \rfloor_{\tau}}\|_V^2
\big)
 \big\| X_{\lfloor {r} \rfloor_{\tau}}^{M,N}-X_r^{M,N} \big\|
\dd r,
\end{split}
\end{equation}
where $r \in [0,s], s \in [0,t_i]$. For convenience, we denote \[
R(r)=\int_0^r E_N (r-\lfloor {u} \rfloor_{\tau})P_N \dd W(u)
-
\int_0^{\lfloor {r} \rfloor_{\tau}} E_N ({\lfloor {r} \rfloor_{\tau}}-\lfloor {u} \rfloor_{\tau})P_N \dd W(u).\]
Then we have
\begin{equation}
\begin{split}
X_r^{M,N} -X_{\lfloor {r} \rfloor_{\tau}}^{M,N} &=
[ E_N (r)- E_N (\lfloor {r} \rfloor_{\tau}) ] X_0^{M,N} +
\int_0^r \tfrac {E_N(r-\lfloor {u} \rfloor_{\tau}) P_N F (X_{\lfloor {u} \rfloor_{\tau}}^{M,N})}
{1+\tau \| P_N F (X_{\lfloor {u} \rfloor_{\tau}}^{M,N}) \|}
\dd u
\\
&\quad -
\int_0^{\lfloor {r} \rfloor_{\tau}} \tfrac {E_N(\lfloor {r} \rfloor_{\tau} - \lfloor {u} \rfloor_{\tau}) P_N F (X_{\lfloor {u} \rfloor_{\tau}}^{M,N})}
{1+\tau \|P_N F (X_{\lfloor {u} \rfloor_{\tau}}^{M,N}) \|} \dd u+
R(r).
\end{split}
\end{equation}
This implies that
\begin{equation}
\begin{split}
&\chi_{\Omega_{B_\tau,t_{i-1}}}
\| X_r^{M,N} -X_{\lfloor {r} \rfloor_{\tau}}^{M,N}\|
\\
& \quad \leq
\tau^{\tfrac{\gamma}{2}}  \|X_0^{M,N}\|_{\gamma} +
\chi_{\Omega_{B_\tau,t_{i-1}}}
 \int_0^{\lfloor {r} \rfloor_{\tau}} \Big\| E_N(\lfloor {r} \rfloor_{\tau}-\lfloor {u} \rfloor_{\tau})
(E_N(r-\lfloor {r} \rfloor_{\tau})-I)
\tfrac {P_N F (X_{\lfloor {u} \rfloor_{\tau}}^{M,N})}
{1+\tau \| P_N F (X_{\lfloor {u} \rfloor_{\tau}}^{M,N}) \|} \Big\|
\dd u
\\
& \quad +\chi_{\Omega_{B_\tau,t_{i-1}}}
\int_{\lfloor {r} \rfloor_{\tau}}^r \Big\| \tfrac {E_N(r- \lfloor {u} \rfloor_{\tau}) P_N F (X_{\lfloor {u} \rfloor_{\tau}}^{M,N})}
{1+\tau \|P_N F (X_{\lfloor {u} \rfloor_{\tau}}^{M,N}) \|} \Big\| \dd u
+
\chi_{\Omega_{B_\tau,t_{i-1}}} \|R(r)\|
\\
& \quad \leq
\tau^{\tfrac{\gamma}{2}}  \|X_0^{M,N}\|_{\gamma}+
C \big( 1+(B_\tau)^3 \big ) \big( \tau^{\tfrac 34}+\tau \big )+
\|R(r)\|.
\end{split}
\end{equation}
Therefore we obtain
\begin{equation}
\begin{split}
J_2 & \leq C
\int_0^s
\Big(
(s-\lfloor {r} \rfloor_{\tau})^{-\tfrac{1}{4}}
\big(
1+(B_\tau)^2+\tau^{\tfrac32}(B_\tau)^6
+
\|\mathcal{O}^{M,N}_r\|_V^2
+
\|\mathcal{O}^{M,N}_{\lfloor {r} \rfloor_{\tau}}\|_V^2
\big)
\\&
\qquad \qquad \times
\Big[\tau^{\tfrac{\gamma}{2}}  \|X_0^{M,N}\|_{\gamma}+
C \big( 1+(B_\tau)^3 \big ) \tau^{\tfrac 34}+
\|R(r)\| \Big]
\Big)
\dd r
\\
&\leq C \big(1+(B_\tau)^2+\tau^{\tfrac32}(B_\tau)^6 \big)\tau^{\tfrac{\gamma}{2}}\|X_0^{M,N}\|_{\gamma}+
C \big(1+(B_\tau)^2+\tau^{\tfrac32}(B_\tau)^6 \big)
\big( 1+(B_\tau)^3 \big ) \tau^{\tfrac 34}
\\
&\quad
+
C
\int_0^s
(s-\lfloor {r} \rfloor_{\tau})^{-\tfrac{1}{4}}
\big(
\|\mathcal{O}^{M,N}_r\|_V^2
+
\|\mathcal{O}^{M,N}_{\lfloor {r} \rfloor_{\tau}}\|_V^2
\big)
\big[
\tau^{\tfrac{\gamma}{2}}  \|X_0^{M,N}\|_{\gamma}
+
\big( 1+(B_\tau)^3 \big ) \tau^{\tfrac 34}
\big]
\dd r.
\\
&\quad
+
C
\int_0^s
(s-\lfloor {r} \rfloor_{\tau})^{-\tfrac{1}{4}}
\big(
1+(B_\tau)^2+\tau^{\tfrac32}(B_\tau)^6
+
\|\mathcal{O}^{M,N}_r\|_V^2
+
\|\mathcal{O}^{M,N}_{\lfloor {r} \rfloor_{\tau}}\|_V^2
\big)
\|R(r)\|
\dd r.
\end{split}
\end{equation}
 Thanks to the fact that $\|R(r)\|_{L^p(\Omega,H)}\leq C \tau^{\tfrac{\gamma}{2}}$
 acquired easily by the similar argument in
 \cite{kruse2012optimal} together with $B_\tau=\tau^{-\text{min}\{\tfrac{\gamma}{4},\tfrac3{20}\}}$
 we possess
 \begin{equation}
  \|J_2\|_{L^{9p}(\Omega,\R)}
 \leq C(X_0,T,p).
 \end{equation}
Putting these estimates together we can deduce from  \eqref{equ;bound Z_V} that for any $s\in[0,t_i]$,
\begin{equation}
\E
\big[
\chi_{\Omega_{B_\tau,t_{i-1}}}
\|Z_s^{M,N}\|_{V}^{9p}
\big]
<\infty.
\end{equation}
The desired assertion is thus verified by taking \eqref{equ;Y_T,Z_T} and \eqref{equ;estimate Y} into account.
\end{proof}

By use of the Markov inequality, we follow the same way in
\cite[Theorem 4.6]{wang2018efficient} to obtain the moment bound for the full discretization.
\begin{theorem}[A priori moment bound]\label{full;a priori estimate}
Under Assumptions \ref{ass:A-condition}-\ref{ass:X0}, \ref{ass:initial-value2}, for any $p\geq2$ it holds,
\begin{equation}
\sup_{M,N\in\mathbb N}\sup_{m\in\{0,1,\ldots,M\}}
\E\big[
\|X_{t_m}^{M,N}\|_V^p
\big]
<\infty.
\end{equation}
\end{theorem}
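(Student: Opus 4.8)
The plan is to follow the cut-off (bootstrap) argument of \cite[Theorem~4.5]{wang2018efficient}, combining the conditional estimate of Lemma \ref{indication;a priori} with Markov's inequality. Fix $p\in[2,\infty)$ and $B_\tau=\tau^{-\gamma/15}$, and for $m\in\{1,\ldots,M\}$ decompose
\begin{equation*}
\E\big[\|X_{t_m}^{M,N}\|_V^p\big]
=\E\big[\chi_{\Omega_{B_\tau,t_{m-1}}}\|X_{t_m}^{M,N}\|_V^p\big]
+\E\big[\chi_{\Omega_{B_\tau,t_{m-1}}^c}\|X_{t_m}^{M,N}\|_V^p\big].
\end{equation*}
Lemma \ref{indication;a priori} bounds the first term uniformly in $M,N$, so everything reduces to controlling the second term, which by the Cauchy--Schwarz inequality is at most $\big(\P(\Omega_{B_\tau,t_{m-1}}^c)\big)^{1/2}\big(\E[\|X_{t_m}^{M,N}\|_V^{2p}]\big)^{1/2}$. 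I would estimate the two factors separately: a decay estimate for the probability of the ``bad'' set, and a crude but $N$-uniform, polynomial-in-$\tau^{-1}$ bound for $\E[\|X_{t_m}^{M,N}\|_V^{2p}]$. (For $m=0$ the claim is immediate from Assumption \ref{ass:initial-value2}.)

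For the probability, one uses that $\Omega\setminus\Omega_{B_\tau,t_{m-1}}$ is the disjoint union of $\Omega_{B_\tau,t_{i-1}}\setminus\Omega_{B_\tau,t_i}$ over $i\in\{0,\ldots,m-1\}$, and that on $\Omega_{B_\tau,t_{i-1}}\setminus\Omega_{B_\tau,t_i}$ one has $\chi_{\Omega_{B_\tau,t_{i-1}}}=1$ and $\|X_{t_i}^{M,N}\|_V>B_\tau$. Hence Markov's inequality with an arbitrarily large exponent $q\geq2$, together with Lemma \ref{indication;a priori}, gives
\begin{equation*}
\P\big(\Omega_{B_\tau,t_{m-1}}^c\big)
\leq\sum_{i=0}^{m-1}B_\tau^{-q}\,\E\big[\chi_{\Omega_{B_\tau,t_{i-1}}}\|X_{t_i}^{M,N}\|_V^q\big]
\leq C_q\,M\,B_\tau^{-q}
= C_q\,\tau^{\,q\gamma/15-1},
\end{equation*}
using $M=T/\tau$.

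For the crude moment bound I would iterate the scheme directly in the $V$-norm. Since $E_N(\tau)$ restricted to $H_N$ coincides with the Dirichlet heat semigroup, it is a contraction on $H_N$ for the $V$-norm and satisfies $\|E_N(\tau)v\|_V\leq C\tau^{-\tfrac14-\epsilon}\|v\|$; moreover the tamed drift increment $\tfrac{\tau P_N F(X_{t_j}^{M,N})}{1+\tau\|P_N F(X_{t_j}^{M,N})\|}$ has $H$-norm at most $1$, so its image under $E_N(\tau)$ has $V$-norm at most $C\tau^{-\tfrac14-\epsilon}$, and the noise increment can be rewritten as $E_N(\tau)P_N\Delta W_j=\mathcal O^{M,N}_{t_{j+1}}-E_N(\tau)\mathcal O^{M,N}_{t_j}$, whose $V$-norm has moments bounded uniformly in $M,N$ by Lemma \ref{lem;O_t^{M,N}}. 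Telescoping the recursion therefore yields
\begin{equation*}
\|X_{t_m}^{M,N}\|_V\leq\|X_0\|_V+C\,m\,\tau^{-\tfrac14-\epsilon}
+\sum_{j=0}^{m-1}\big(\|\mathcal O^{M,N}_{t_{j+1}}\|_V+\|\mathcal O^{M,N}_{t_j}\|_V\big),
\end{equation*}
and, after taking $2p$-th moments, using $m\leq M=T/\tau$, Minkowski's inequality and Lemma \ref{lem;O_t^{M,N}}, the $N$-uniform bound $\E[\|X_{t_m}^{M,N}\|_V^{2p}]\leq C_p\,\tau^{-\kappa}$ with $\kappa=(\tfrac52+2\epsilon)p$. (Alternatively, one may run this through the splitting $X^{M,N}=\hat X^{M,N}+Z^{M,N}$ of \eqref{equ;Y_T,Z_T} and Proposition \ref{lem;deterministic}.)

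Putting the two estimates together, the second term is at most $C_{p,q}\,\tau^{(q\gamma/15-1-\kappa)/2}$, so choosing $q$ large enough, say $q\geq\tfrac{15}{\gamma}(1+\kappa)$, keeps it bounded uniformly in $M,N$ (in fact it tends to $0$). Combined with Lemma \ref{indication;a priori} this gives the asserted uniform moment bound. The substantive work has already been done in Lemma \ref{indication;a priori}; in this theorem the only delicate points are to keep the crude moment bound independent of $N$ — which is exactly where the uniform regularity of $\mathcal O^{M,N}$ from Lemma \ref{lem;O_t^{M,N}} and the $N$-free smoothing/contractivity estimates for $E_N$ enter — and to pick the Markov exponent $q$ large enough to overwhelm the polynomial-in-$\tau^{-1}$ growth of that bound.
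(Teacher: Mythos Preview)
Your proposal is correct and follows exactly the bootstrap/Markov strategy the paper invokes by citing \cite[Theorem~4.5]{wang2018efficient}: split on $\Omega_{B_\tau,t_{m-1}}$, control the good part via Lemma~\ref{indication;a priori}, and on the bad part trade a crude $N$-uniform polynomial-in-$\tau^{-1}$ moment bound against the arbitrarily fast decay of $\P(\Omega_{B_\tau,t_{m-1}}^c)$ obtained from Markov's inequality and Lemma~\ref{indication;a priori}. The only cosmetic point is that the telescoping starts from $\|P_N X_0\|_V$ rather than $\|X_0\|_V$, which is exactly what Assumption~\ref{ass:initial-value2} provides.
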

Following the standard arguments by  Burkholder-Davis-Gundy inequality,
we can easily obtain the following useful result.
\begin{lemma}\label{lem:regularity-O_T^M,N}
Let
$\alpha \in [0,\gamma]$,
$\gamma \in (0,1]$
and
$p \geq 2$.
Then for all $s,t \in [0,T]$ with $s<t$ it holds
\begin{equation}
\Big\|
\int_s^t
E(t-\lfloor r \rfloor_{\tau})
\mathrm d W(r)
\Big\|_{L^p(\Omega,\dot{H}^{\alpha})}
\leq
C\,(t-s)^{\tfrac {\gamma-\alpha}2}.
\end{equation}
\end{lemma}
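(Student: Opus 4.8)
The plan is to reduce the statement to a direct computation with the spectral representation of the semigroup and the It\^o isometry, exactly in the spirit of \cite[Lemma 3.4]{kruse2012optimal}. Write $W(t)=\sum_{i\in\N}\sqrt{q_i}\,\beta_i(t)e_i$ with $\{\beta_i\}$ independent standard Brownian motions (in the case $\gamma\le\tfrac12$ the operators $A$ and $Q$ commute, so the $e_i$ diagonalize both; for $\gamma>\tfrac12$ one works with an arbitrary orthonormal basis and uses $\|A^{(\gamma-1)/2}Q^{1/2}\|_{\mathcal L_2}<\infty$ directly). Since the integrand is Gaussian, equivalence of Gaussian moments reduces the $L^p(\Omega,\dot H^\alpha)$ bound to the $L^2(\Omega,\dot H^\alpha)$ bound, so it suffices to control
\begin{equation}
\E\Big[\Big\|\int_s^t E(t-\lfloor r\rfloor_\tau)\,\dd W(r)\Big\|_\alpha^2\Big]
=\sum_{i\in\N}q_i\,\lambda_i^\alpha\int_s^t e^{-2\lambda_i(t-\lfloor r\rfloor_\tau)}\,\dd r
\end{equation}
by the It\^o isometry and Parseval's identity.

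The key step is then to bound $\int_s^t e^{-2\lambda_i(t-\lfloor r\rfloor_\tau)}\,\dd r$ by $C\,\lambda_i^{-(1-\gamma)}(t-s)^{\gamma-\alpha}\cdot\lambda_i^{\alpha-\gamma}$-type quantities. More precisely, I would split the exponent using $t-\lfloor r\rfloor_\tau\ge t-r$ to get $e^{-2\lambda_i(t-\lfloor r\rfloor_\tau)}\le e^{-2\lambda_i(t-r)}$ when this is convenient, and also interpolate: for the factor $\lambda_i^\alpha$ write $\lambda_i^\alpha=\lambda_i^{\gamma-1}\cdot\lambda_i^{1+\alpha-\gamma}$, absorbing $\lambda_i^{\gamma-1}q_i$ into the summable quantity $\|A^{(\gamma-1)/2}Q^{1/2}\|_{\mathcal L_2}^2$, and estimating
\begin{equation}
\lambda_i^{1+\alpha-\gamma}\int_s^t e^{-2\lambda_i(t-r)}\,\dd r
\le \lambda_i^{\alpha-\gamma}\big(1-e^{-2\lambda_i(t-s)}\big)
\le C\,(t-s)^{\gamma-\alpha},
\end{equation}
where the last inequality uses the elementary bound $x^{\alpha-\gamma}(1-e^{-2x})\le C$ for $x=\lambda_i(t-s)$, valid because $\alpha-\gamma\in[-\gamma,0]$ and $1-e^{-2x}\le\min\{1,2x\}$. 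Summing over $i$ and taking square roots yields the claimed rate $(t-s)^{(\gamma-\alpha)/2}$. One small subtlety is the mismatch between $t-r$ and $t-\lfloor r\rfloor_\tau$: since $t-\lfloor r\rfloor_\tau\ge t-r\ge 0$, replacing one by the other only decreases the exponential, so the estimate via $e^{-2\lambda_i(t-r)}$ is an upper bound and no extra $\tau$-dependence appears — this is precisely why the discrete convolution inherits the same regularity as the continuous one.

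The main obstacle, such as it is, is bookkeeping: one must be careful to distribute the powers of $\lambda_i$ so that exactly $\lambda_i^{\gamma-1}q_i$ is left to sum (giving the finite Hilbert--Schmidt norm from Assumption~\ref{assum:eq-noise}) while the remaining $\lambda_i$-power combines with the time integral to produce a bound uniform in $i$ and proportional to $(t-s)^{\gamma-\alpha}$. Because the cited reference \cite[Lemma 3.4]{kruse2012optimal} carries out essentially this computation for the continuous stochastic convolution with $1+r=\gamma$ and $G=I$, and the only modification is the harmless replacement of $t-r$ by $t-\lfloor r\rfloor_\tau$ inside the exponential, the proof is short: invoke that lemma's argument verbatim, note the monotonicity $e^{-2\lambda_i(t-\lfloor r\rfloor_\tau)}\le e^{-2\lambda_i(t-r)}$, and conclude.
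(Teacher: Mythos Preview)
Your proposal is correct and mirrors the paper's own treatment: the paper does not give an explicit proof but simply invokes \cite[Lemma~3.4]{kruse2012optimal} with $1+r=\gamma$, $G=I$, together with Parseval's identity, which is precisely what you have sketched. Your explicit observation that $t-\lfloor r\rfloor_\tau\ge t-r$ makes transparent why the discrete convolution inherits the same bound as the continuous one with no $\tau$-dependence, and your handling of the $\gamma>\tfrac12$ (non-commuting) case via the Hilbert--Schmidt norm of $A^{(\gamma-1)/2}Q^{1/2}$ is exactly the right bookkeeping.
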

Based on the above facts, it is easy to validate the forthcoming regularity estimates.
\begin{corollary}\label{X_t;gamma bound}
Under Assumptions \ref{ass:A-condition}-\ref{ass:X0}, \ref{ass:initial-value2},  for any $p\geq 2$
it holds
\begin{equation}\label{eq:full-bound}
\sup_{M,N\in \N,t\in[0,T]}
\E\big[\|X_t^{M,N}\|_{\gamma}^p\big]
+
\sup_{M,N\in \N,t\in[0,T]}
\E\big[\|X_t^{M,N}\|_{V}^p\big]
<\infty.
\end{equation}
Furthermore, for $0\leq s<t\leq T$,
\begin{equation}\label{eq:holder-full}
\|X_t^{M,N}-X_s^{M,N}\|_{L^p(\Omega,H)}
\leq
C(t-s)^{\tfrac{\gamma}{2}}.
\end{equation}
\end{corollary}

\begin{corollary}\label{X_t;1}
Under Assumptions \ref{ass:A-condition}-\ref{ass:X0}, \ref{ass:initial-value2}, for any $p\geq 2$ and sufficient small $\epsilon > 0$ we obtain
\begin{equation}
\sup_{M,N\in \N,m\in\{1,\cdots,M\}}
\|X_{t_m}^{M,N}\|_{L^{p}(\Omega,\dot{H}^1)}
\leq
C+C\tau^{\tfrac{-1+\gamma-\epsilon}{2}}.
\end{equation}
\end{corollary}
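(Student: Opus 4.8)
The plan is to evaluate the continuous-time representation \eqref{full;continuous version} at $t=t_m$ and to estimate, in $L^{p}(\Omega,H)$, the $\dot{H}^{1}$-norm (that is, $\|A^{\nicefrac12}\,\cdot\,\|$) of each of the three summands separately: the semigroup term $E_N(t_m)X_0^{M,N}$, the tamed drift integral, and the discrete stochastic convolution $\mathcal{O}_{t_m}^{M,N}=\int_0^{t_m}E_N(t_m-\lfloor s\rfloor_\tau)P_N\,\dd W(s)$. Two structural facts are used throughout: $P_N$ commutes with every fractional power of $A$ and with $E_N(\cdot)$; and $t_m-\lfloor s\rfloor_\tau\ge\tau$ for every $s<t_m$, so that whenever a singular kernel $r\mapsto r^{-\theta}$ has to be integrated it is integrated against a mesh bounded away from the origin and $\int_0^{t_m}(t_m-\lfloor s\rfloor_\tau)^{-\theta}\,\dd s=\tau^{1-\theta}\sum_{k=1}^{m}k^{-\theta}$, which stays finite even when $\theta\ge1$.

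For the semigroup term, writing $A^{\nicefrac12}E_N(t_m)P_NX_0^{M,N}=A^{\nicefrac{(1-\gamma)}{2}}E_N(t_m)\,P_NA^{\nicefrac{\gamma}{2}}X_0$ and using \eqref{regularity of semigroup}, Assumption \ref{ass:X0} and $t_m\ge\tau$ gives a bound $C\,t_m^{-\nicefrac{(1-\gamma)}{2}}\|X_0\|_{\gamma}\le C\tau^{\nicefrac{(-1+\gamma)}{2}}$ (and simply $C\|X_0\|_1\le C$ if $\gamma=1$). For the drift term I would discard the taming denominator (it is $\ge1$), apply $\|A^{\nicefrac12}E_N(r)\|_{\mathcal{L}(H)}\le Cr^{-\nicefrac12}$ with $r=t_m-\lfloor s\rfloor_\tau$, bound $\|F(v)\|\le C(1+\|v\|_V^{3})$ from the form of $f$ in Assumption \ref{assum:Nonlinearity}, and invoke the uniform moment bound of Theorem \ref{full;a priori estimate}; the remaining kernel integral is $\sum_{k=1}^{m}\tau(k\tau)^{-\nicefrac12}=\tau^{\nicefrac12}\sum_{k=1}^{m}k^{-\nicefrac12}\le C\sqrt{t_m}\le C$, so this term is bounded by a constant independent of $\tau,M,N$.

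The main obstacle is the stochastic convolution, because Lemma \ref{lem:regularity-O_T^M,N} and the regularity estimates underlying it only reach $\dot{H}^{\alpha}$ for $\alpha\le\gamma\le1$ and therefore cannot be applied directly in $\dot{H}^{1}$ when $\gamma<1$. I would instead estimate it by hand via the Burkholder--Davis--Gundy inequality,
\begin{equation*}
\big\|\mathcal{O}_{t_m}^{M,N}\big\|_{L^{p}(\Omega,\dot{H}^{1})}^{2}
\le
C\int_0^{t_m}\big\|A^{\nicefrac12}E_N(t_m-\lfloor s\rfloor_\tau)P_N\big\|_{\mathcal{L}_2^{0}}^{2}\,\dd s,
\end{equation*}
and then factor $A^{\nicefrac12}E_N(r)P_NQ^{\nicefrac12}=\big(A^{1-\nicefrac{\gamma}{2}}E_N(r)P_N\big)\big(A^{\nicefrac{(\gamma-1)}{2}}Q^{\nicefrac12}\big)$, so that \eqref{regularity of semigroup} and Assumption \ref{assum:eq-noise} give $\|A^{\nicefrac12}E_N(r)P_N\|_{\mathcal{L}_2^{0}}\le C\,r^{-1+\nicefrac{\gamma}{2}}$. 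On the grid the resulting integral equals $\tau^{-1+\gamma}\sum_{k=1}^{m}k^{-2+\gamma}$, which is $\le C\tau^{-1+\gamma}$ for $\gamma\in(0,1)$ and, at the borderline $\gamma=1$, reduces to $\sum_{k=1}^{m}k^{-1}\le C\log(T/\tau)\le C_\epsilon\tau^{-\epsilon}$ --- this logarithmic divergence is precisely where the arbitrarily small $\epsilon$ is forced in. In all cases the integral is $\le C\tau^{-1+\gamma-\epsilon}$, so $\|\mathcal{O}_{t_m}^{M,N}\|_{L^{p}(\Omega,\dot{H}^{1})}\le C\tau^{\nicefrac{(-1+\gamma-\epsilon)}{2}}$. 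Combining the three bounds by the triangle inequality (and Minkowski's inequality in $L^{p}(\Omega)$) yields the asserted estimate $C+C\tau^{\nicefrac{(-1+\gamma-\epsilon)}{2}}$.
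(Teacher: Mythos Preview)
Your proof is correct and follows essentially the same route as the paper: decompose $X_{t_m}^{M,N}$ via \eqref{full;continuous version}, bound the semigroup term by $C\tau^{(\gamma-1)/2}$ using \eqref{regularity of semigroup} and Assumption~\ref{ass:X0}, bound the drift by a constant via the smoothing estimate $\|A^{1/2}E_N(r)\|_{\mathcal{L}(H)}\le Cr^{-1/2}$ together with Theorem~\ref{full;a priori estimate}, and bound the stochastic convolution by factoring $A^{1/2}E_N(r)Q^{1/2}=A^{1-\gamma/2}E_N(r)\cdot A^{(\gamma-1)/2}Q^{1/2}$. The only cosmetic difference is in the last step: the paper uses $t_m-\lfloor s\rfloor_\tau\ge\tau$ to write $(t_m-\lfloor s\rfloor_\tau)^{-(2-\gamma)}\le\tau^{-(1-\gamma+\epsilon)}(t_m-\lfloor s\rfloor_\tau)^{-(1-\epsilon)}$ and then integrates, whereas you convert the integral to the finite sum $\tau^{-1+\gamma}\sum_{k=1}^{m}k^{-2+\gamma}$ directly; both arguments give the same bound and rely on the same grid lower bound.
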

\begin{proof}
Firstly, we recall the continuous version of the fully discrete scheme
\begin{equation}
X_{t}^{M,N}=E_N (t)X_0^{M,N} +\int_0^t \tfrac{E_N (t- \lfloor {s} \rfloor_{\tau})P_N F (X_{\lfloor {s} \rfloor_{\tau}}^{M,N})}
{1+\tau \| P_N F (X_{\lfloor {s} \rfloor_{\tau}}^{M,N}) \|} \dd s
+\int_0^t E_N (t-\lfloor {s} \rfloor_{\tau})P_N \dd W(s),
\end{equation}
which combined with \eqref{regularity of semigroup}, properties of nonlinearities and Theorem \ref{full;a priori estimate} yields
\begin{equation}
\begin{split}
\|X_{t_m}^{M,N}\|_{L^{p}(\Omega,\dot{H}^1)}
&\leq \|E_N (t_m)X_0^{M,N}\|_1
+\int_0^{t_m} \|E_N (t_m- \lfloor {s} \rfloor_{\tau})P_N F
 (X_{\lfloor {s} \rfloor_{\tau}}^{M,N})
 \|_{L^{p}(\Omega,\dot{H}^1)}
\dd s
\\&\quad +\Big(\int_0^{t_m} \|A^{\nicefrac12}E_N (t_m-\lfloor {s} \rfloor_{\tau})\|_{L^{p}(\Omega,\mathcal{L}_2^0)}^2 \dd s\Big)^{\tfrac12}
\\&
\leq
C \tau^{\tfrac{\gamma-1}{2}}
+
C
\int_0^{t_m} (t_m- \lfloor {s} \rfloor_{\tau})^{-\nicefrac12}
\dd s
\sup_{s\in[0,T]}
\|P_N F
 (X_{\lfloor {s} \rfloor_{\tau}}^{M,N})
 \|_{L^{p}(\Omega,H)}
\\&
\quad
+\Big(\int_0^{t_m} \|A^{1-\tfrac{\gamma}{2}}E_N (t_m-\lfloor {s} \rfloor_{\tau})\|_{\mathcal{L}(H)}^2 \|A^{\tfrac{\gamma-1}{2}}Q^{\tfrac12}\|
_{\mathcal{L}_2(H)}^2\dd s\Big)^{\tfrac12}
\\&\leq
C \tau^{\tfrac{\gamma-1}{2}}
+
C
+
C\tau^{\tfrac{-1+\gamma-\epsilon}{2}}
\Big(
\int_0^{t_m}  (t_m-\lfloor {s} \rfloor_{\tau})^{-1+\epsilon}
\dd s
\Big)^{\tfrac12}
\\
&
\leq
C
+
C\tau^{\tfrac{-1+\gamma-\epsilon}{2}},
\end{split}
\end{equation}
where we used the fact
$t_m-\lfloor {s} \rfloor_{\tau}\geq \tau$ for
$s <t_m$.
\end{proof}

\subsection{Weak convergence rate for the full discretization}
\label{sec;fullweak}
In order to carry out the error analysis, 
we similarly introduce the continuous process
\begin{equation}
\bar{X}_t^{M,N}:=X_t^{M,N}-\mathcal{O}_t^{M,N}=E_N(t)X_0^{M,N} +\int_0^t
\tfrac{E_N (t- \lfloor {s} \rfloor_{\tau})P_N F (\bar{X}_{\lfloor {s} \rfloor_{\tau}}^{M,N}+\mathcal{O}_{\lfloor {s} \rfloor_{\tau}}^{M,N})}
{1+\tau \| P_N F (\bar{X}_{\lfloor {s} \rfloor_{\tau}}^{M,N}+\mathcal{O}_{\lfloor {s} \rfloor_{\tau}}^{M,N}) \|} \dd s.
\end{equation}

The next lemma is essentially used in the weak error analysis.
\begin{lemma}\label{lem;technical estimate}
Let Assumptions \ref{ass:A-condition} and \ref{assum:eq-noise} hold. For any $0\leq\beta\leq 1$ and $p\geq2$ it holds that,
\begin{equation}\label{O_S}
 \|\mathcal{O}_s- \mathcal{O}_s^{M,N}\|_{L^p(\Omega,\dot{H}^{-\beta})}
\leq C \, \tau^{\tfrac{\gamma+\beta-\epsilon}{2}}+C \, \lambda_N^{\tfrac{-\gamma-\beta}{2}}.
\end{equation}
\end{lemma}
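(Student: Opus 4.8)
The plan is to insert the intermediate quantity $\mathcal O_s^N:=P_N\mathcal O_s=\int_0^s E_N(s-r)P_N\,\dd W(r)$ (the two expressions agree because $P_N$ is the spectral projection of $A$ and hence commutes with $E(t)$) and to split the error into a spatial and a temporal part,
\[
\mathcal O_s-\mathcal O_s^{M,N}
=
\big(\mathcal O_s-P_N\mathcal O_s\big)
+
\big(P_N\mathcal O_s-\mathcal O_s^{M,N}\big).
\]
For the spatial part, since $A^{-\beta/2}$, $P_N$ and $A^{\gamma/2}$ all commute, \eqref{estimate:P_N-I} gives
\[
\|\mathcal O_s-P_N\mathcal O_s\|_{-\beta}
=
\big\|(I-P_N)A^{-\frac{\beta+\gamma}{2}}\,A^{\frac{\gamma}{2}}\mathcal O_s\big\|
\le
\|(I-P_N)A^{-\frac{\beta+\gamma}{2}}\|_{\mathcal L(H)}\,\|\mathcal O_s\|_{\gamma}
\le
C\lambda_N^{-\frac{\beta+\gamma}{2}}\|\mathcal O_s\|_{\gamma},
\]
and taking the $L^p(\Omega)$-norm and invoking \eqref{regularity;O_s} produces the $C\lambda_N^{-(\gamma+\beta)/2}$ contribution.

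For the temporal part I would first write $s-\lfloor r\rfloor_\tau=(s-r)+(r-\lfloor r\rfloor_\tau)$ so that $E_N(s-r)-E_N(s-\lfloor r\rfloor_\tau)=E_N(s-r)\big(I-E_N(r-\lfloor r\rfloor_\tau)\big)$ with $r-\lfloor r\rfloor_\tau\le\tau$. Since the integrand is deterministic, the Burkholder--Davis--Gundy inequality reduces the $L^p$-estimate to the $L^2$-one, and using \eqref{eq:A-Q-condition} for the noise factor,
\[
\big\|P_N\mathcal O_s-\mathcal O_s^{M,N}\big\|_{L^p(\Omega,\dot H^{-\beta})}^2
\le
C\,\big\|A^{\frac{\gamma-1}{2}}Q^{\frac12}\big\|_{\mathcal L_2(H)}^2
\int_0^s
\big\|A^{-\frac\beta2}E_N(s-r)\big(I-E_N(r-\lfloor r\rfloor_\tau)\big)A^{\frac{1-\gamma}{2}}\big\|_{\mathcal L(H)}^2\,\dd r .
\]
The key computation is to distribute the fractional powers on $H_N$: for $\mu\in[0,1]$,
\[
A^{-\frac\beta2}E_N(s-r)\big(I-E_N(r-\lfloor r\rfloor_\tau)\big)A^{\frac{1-\gamma}{2}}
=
\big[A^{a}E_N(s-r)\big]\big[A^{-\mu}\big(I-E_N(r-\lfloor r\rfloor_\tau)\big)\big],
\qquad
a=\mu-\tfrac\beta2+\tfrac{1-\gamma}{2},
\]
which by \eqref{regularity of semigroup} (valid for $E_N$ with $N$-independent constants) is bounded by $C(s-r)^{-a}\tau^{\mu}$ whenever $a\ge0$ and $\mu\in[0,1]$. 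Choosing $\mu=\tfrac{\beta+\gamma}{2}-\tfrac\epsilon2$ (admissible since $0\le\tfrac{\beta+\gamma}{2}\le1$ and $\gamma>0$) makes $a=\tfrac12-\tfrac\epsilon2$, so $\int_0^s(s-r)^{-2a}\,\dd r\le C$ and the integral above is $\le C\tau^{\beta+\gamma-\epsilon}$; taking square roots yields $C\tau^{(\beta+\gamma-\epsilon)/2}$. Adding the two contributions gives \eqref{O_S}.

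I expect the exponent bookkeeping in the temporal term to be the only real obstacle: one must leave a large enough negative power $-\mu$ of $A$ on $I-E_N(r-\lfloor r\rfloor_\tau)$ to convert smallness of the step size into a $\tau$-power of order $(\beta+\gamma)/2$, while keeping the leftover positive power $a$ on $E_N(s-r)$ strictly below $\tfrac12$ so that the singular time integral near $r=s$ converges; these two requirements are compatible only up to the borderline $a=\tfrac12$, which is precisely what forces the arbitrarily small loss $\epsilon$. Everything else --- commuting spectral operators, \eqref{regularity of semigroup}, Burkholder--Davis--Gundy, the moment bound \eqref{regularity;O_s}, and \eqref{eq:A-Q-condition} --- enters only routinely; note in particular that the argument uses only that $P_N$ commutes with $A$, not that $Q$ commutes with $A$.
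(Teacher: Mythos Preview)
Your proof is correct and follows essentially the same route as the paper: the same spatial/temporal splitting via $P_N\mathcal O_s$, the same use of \eqref{estimate:P_N-I} together with \eqref{regularity;O_s} for the spatial piece, and the same Burkholder--Davis--Gundy plus semigroup-smoothing argument for the temporal piece. Your parameters $(a,\mu)=\big(\tfrac{1-\epsilon}{2},\tfrac{\beta+\gamma-\epsilon}{2}\big)$ coincide exactly with the exponent split $A^{(1-\epsilon)/2}E_N(s-r)\cdot A^{(-\beta-\gamma+\epsilon)/2}(I-E_N(r-\lfloor r\rfloor_\tau))\cdot A^{(\gamma-1)/2}Q^{1/2}$ chosen in the paper, so the two arguments are identical up to notation.
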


\begin{proof}
 Using the triangle inequality gives
\begin{align}
\E \big[\|\mathcal{O}_s- \mathcal{O}_{s}^{M,N}\|_{-\beta}^p\big]
\leq C \, \E \big[\|(I-P_N)\mathcal{O}_s\|_{-\beta}^p\big]+
C \,\E \big[\|P_N\mathcal{O}_s- \mathcal{O}_s^{M,N}\|_{-\beta}^p\big].
\end{align}
 Thanks to \eqref{regularity;O_s} and \eqref{estimate:P_N-I}, it suffices to  show in detail the estimate of the second term.
By \eqref{regularity of semigroup}, \eqref{eq:A-Q-condition} and the Burkholder-Davis-Gundy inequality, we can show
\begin{align}\label{eq:O}
\E &\big[\|P_N\mathcal{O}_s- \mathcal{O}_{s}^{M,N}\|_{-\beta}^p\big]
=
\E \big[\|\mathcal{O}^N_s- \mathcal{O}_{s}^{M,N}\|_{-\beta}^p\big]
\nonumber
\\&
\leq C \Big(\int_0^s
\big\|A^{-\tfrac{\beta}{2}}E_N(s-r)\big(I-E_N(r-\lfloor r \rfloor_{\tau})\big)\big\|_{\mathcal L_2^0}^2 \dd r\Big)^{\tfrac{p}{2}}
\nonumber
\\
&\leq C \Big( \int_0^s
\big\|A^{\tfrac{1-\epsilon}{2}}E_N(s-r)\big\|_{\mathcal L(H)}^2
\,
\big\|A^{\tfrac{-\beta-\gamma+\epsilon}{2}}\big(I-E_N(r-\lfloor r \rfloor_{\tau})\big)\big\|_{\mathcal L(H)}^2
\,
\big\|A^{\tfrac{\gamma-1}{2}}Q^{\tfrac12}\big\|_{\mathcal L_2(H)}^2 \dd r \Big)^{\tfrac{p}{2}}
\nonumber
\\
&\leq C \Big( \tau^{\gamma+\beta-\epsilon} \int_0^s (s-r)^{-1+\epsilon} \dd r \Big)^{\tfrac{p}{2}} \leq C \, \tau^{\tfrac{(\gamma+\beta-\epsilon)p}{2}}.
\end{align}
\end{proof}



Equipped with the previous preparation, we are now ready to prove the expected weak error.
\begin{theorem}
[Weak convergence rate for the full discretization]
\label{theo:full discretization}
Under Assumptions \ref{ass:A-condition}-\ref{ass:X0} and \ref{ass:initial-value2},
it holds that, for any $M,N \in \N$ and $\Phi\in C_b^2(H,\R)$,
\begin{equation}
\big|
\E
\big[ \Phi(X(T)) \big]
-
\E \big[\Phi(X_T^{M,N})\big]
\big|
\leq
C \big(\lambda_N^{-\gamma+\epsilon}+ \tau^{\gamma-\epsilon}
\big).
\end{equation}
\end{theorem}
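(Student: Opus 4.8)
The plan is to reduce the full-discrete weak error to the spatial weak rate, which is already available, plus a purely temporal contribution. First I would apply the triangle inequality to split off the spatial error $\E[\Phi(X(T))]-\E[\Phi(X^N(T))]$, bounded by $C\lambda_N^{-\gamma+\epsilon}$ by Theorem~\ref{The;weak covergence}, leaving the temporal error $\E[\Phi(X^N(T))]-\E[\Phi(X_T^{M,N})]$. For the latter I would imitate the spatial decomposition: with the surrogate stochastic convolution $\mathcal O_T^{M,N}=\int_0^TE_N(T-\lfloor s\rfloor_\tau)P_N\,\dd W(s)$, the ``smooth'' remainder $\bar X_T^{M,N}=X_T^{M,N}-\mathcal O_T^{M,N}$, and $\bar X^N(T)=X^N(T)-\mathcal O_T^N$, write
\[
\E[\Phi(X^N(T))]-\E[\Phi(X_T^{M,N})]
=\underbrace{\big(\E[\Phi(\bar X^N(T)+\mathcal O_T^N)]-\E[\Phi(\bar X^N(T)+\mathcal O_T^{M,N})]\big)}_{=:K_1}
+\underbrace{\big(\E[\Phi(\bar X^N(T)+\mathcal O_T^{M,N})]-\E[\Phi(\bar X_T^{M,N}+\mathcal O_T^{M,N})]\big)}_{=:K_2},
\]
so that $K_1$ isolates the time-discretization of the stochastic convolution while $K_2$ compares the two smooth remainders.

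For $K_1$ I would Taylor-expand $\Phi$ to second order about $X^N(T)=\bar X^N(T)+\mathcal O_T^N$. The quadratic remainder is $\le C\,\E\|\mathcal O_T^N-\mathcal O_T^{M,N}\|^2$, which by~\eqref{eq:O} (with $\beta=0$) is $\le C\tau^{\gamma-\epsilon}$. The first-order term $\E[\Phi'(X^N(T))(\mathcal O_T^N-\mathcal O_T^{M,N})]$ is the crux, and is where Malliavin calculus enters: writing $\mathcal O_T^N-\mathcal O_T^{M,N}=\int_0^T[E_N(T-s)-E_N(T-\lfloor s\rfloor_\tau)]P_N\,\dd W(s)$, applying the integration-by-parts formula~\eqref{Malliavin integration by parts} and the chain rule $\mathcal D_s\Phi'(X^N(T))=\Phi''(X^N(T))\mathcal D_sX^N(T)$, this term equals $\E\int_0^T\langle[E_N(T-s)-E_N(T-\lfloor s\rfloor_\tau)]P_N,\Phi''(X^N(T))\mathcal D_sX^N(T)\rangle_{\mathcal L_2^0}\,\dd s$; then, factoring $E_N(T-s)-E_N(T-\lfloor s\rfloor_\tau)=E_N(T-s)(I-E_N(s-\lfloor s\rfloor_\tau))$ and using the smoothing bounds~\eqref{regularity of semigroup}, $\|A^{(\gamma-1)/2}Q^{1/2}\|_{\mathcal L_2(H)}<\infty$, boundedness of $\Phi''$, and $\|\mathcal D_sX^N(T)\|_{L^2(\Omega,\mathcal L_2^0)}\le C(T-s)^{(\gamma-1)/2}$ from Proposition~\ref{Estimate of Malliavin derivative of the solution}, I would extract a factor $\tau^{\gamma-\epsilon}$ against the integrable singularity $(T-s)^{-1+\epsilon}$, so that $|K_1|\le C\tau^{\gamma-\epsilon}$.

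For $K_2$, a first-order Taylor expansion of $\Phi$ and boundedness of $\Phi'$ reduce matters to the strong estimate $\sup_{t\in[0,T]}\E\|\bar X^N(t)-\bar X_t^{M,N}\|$, which I would attack along the lines of the argument for $e_1$ in Theorem~\ref{The;weak covergence}: combining the variation-of-constants representation of $e(t):=\bar X^N(t)-\bar X_t^{M,N}$ with the energy identity $\tfrac{\dd}{\dd t}\|e(t)\|^2+2\langle e(t),A_Ne(t)\rangle=2\langle g(t),e(t)\rangle$, where the forcing $g$ collects the taming defect ($\tfrac{\tau F}{1+\tau\|\cdot\|}$ against $F$, of order $\tau$ by the moment bounds of Theorem~\ref{full;a priori estimate}), the semigroup time shift $I-E_N(\cdot-\lfloor\cdot\rfloor_\tau)$ acting on $F$ (of order $\tau^{1-\epsilon}$ by~\eqref{regularity of semigroup} against an integrable singularity), the nonlinearity time increment $F(X^N(t))-F(X^N(\lfloor t\rfloor_\tau))$, and the increment $F(X^N(\lfloor t\rfloor_\tau))-F(X_{\lfloor t\rfloor_\tau}^{M,N})$ with argument difference $e(\lfloor t\rfloor_\tau)+(\mathcal O_{\lfloor t\rfloor_\tau}^N-\mathcal O_{\lfloor t\rfloor_\tau}^{M,N})$ — the latter producing, through the one-sided bound~\eqref{eq:F-one-sided-condition}, Lemma~\ref{lemma;F} and~\eqref{eq:O}, a Gronwall term in $e$ plus a surrogate-convolution error of order $\tau^{\gamma-\epsilon}$ in a negative norm. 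Feeding all these into Gronwall's inequality would give $\sup_{t\in[0,T]}\E\|\bar X^N(t)-\bar X_t^{M,N}\|\le C\tau^{\gamma-\epsilon}$, hence $|K_2|\le C\tau^{\gamma-\epsilon}$ and, together with $|K_1|\le C\tau^{\gamma-\epsilon}$ and the spatial bound, the claim.

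The step I expect to be the main obstacle is the nonlinearity time-increment term $F(X^N(t))-F(X^N(\lfloor t\rfloor_\tau))$ in $K_2$, whose argument difference carries the increment $\mathcal O_t^N-\mathcal O_{\lfloor t\rfloor_\tau}^N$ of the stochastic convolution: as emphasized in the introduction, the time-H\"older regularity of $X_t^{M,N}$ is capped at $\tfrac12$, so that for $\gamma\in(\tfrac12,1]$ a naive pointwise-in-time, triangle-inequality estimate of this term only reaches $\tau^{\gamma/2}$, strictly short of the target $\tau^{\gamma-\epsilon}$, and passing to a negative norm $\dot H^{-\theta}$ helps only partially since Lemma~\ref{lemma;F} constrains its base point to $\dot H^\theta$ with $\theta\le\gamma$. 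The remedy, following the introduction, is to Taylor-expand $F$ once more about the grid value, keep the leading part as an It\^o-type integral $\int_0^T E_N(T-\lfloor s\rfloor_\tau)P_N\,F'(X^N(\lfloor s\rfloor_\tau))\,(\mathcal O_s^N-\mathcal O_{\lfloor s\rfloor_\tau}^N)\,\dd s$, and estimate its $L^2(\Omega,H)$ norm by expanding the square: the ``future'' stochastic-integral increments have vanishing conditional mean, so the off-diagonal cross-terms vanish and only a diagonal sum of the correct order $\tau^{\gamma-\epsilon}$ remains, while the quadratic Taylor remainder is already $O(\tau^\gamma)$. This is the most delicate ingredient of the whole argument; everything else follows the spatial template closely.
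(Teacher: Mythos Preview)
Your global architecture matches the paper exactly: split off the spatial error via Theorem~\ref{The;weak covergence}, decompose the temporal error as $K_1+K_2$, handle $K_1$ by Taylor expansion plus Malliavin integration by parts (this part is essentially verbatim the paper's argument), and attack $K_2$ by an energy estimate on $\Lambda^{M,N}(t)=\bar X_t^{M,N}-\bar X^N(t)$. You also correctly isolate the hard step: the nonlinearity time-increment carrying a stochastic convolution increment, whose naive H\"older bound stalls at order $\tau^{\gamma/2}$ when $\gamma>\tfrac12$.

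Two points of execution differ from the paper and would cause trouble as written. First, the paper orders the intermediate points so that the one-sided Lipschitz term, $K_{25}=\langle\Lambda^{M,N}(t),F(X_t^{M,N})-F(\bar X^N(t)+\mathcal O_t^{M,N})\rangle$, has argument difference \emph{exactly} $\Lambda^{M,N}(t)$, so that \eqref{eq:F-one-sided-condition} applies directly and yields $\le\|\Lambda^{M,N}(t)\|^2$ with no $V$-norm factor. In your ordering the monotone term sits at the grid time, with argument difference $e(\lfloor t\rfloor_\tau)+(\mathcal O_{\lfloor t\rfloor_\tau}^N-\mathcal O_{\lfloor t\rfloor_\tau}^{M,N})$; paired with $e(t)$ this does \emph{not} fit \eqref{eq:F-one-sided-condition}, and any attempt to close via the polynomial Lipschitz bound reintroduces $(1+\|\cdot\|_V^2)$ against products of $e$-terms for which you only have $L^2(\Omega)$ control. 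The fix is the paper's: put the time-increment on the full-discrete process $X_t^{M,N}$ (not on $X^N$), so that the monotone comparison lands at the continuous time $t$.

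Second, the paper does not ``expand the square'' of a Duhamel integral for the stochastic piece. Inside the energy identity the problematic term is $K_{243}=\E\int_0^T\big\langle\Lambda^{M,N}(t),\,F'(X_{\lfloor t\rfloor_\tau}^{M,N})\int_{\lfloor t\rfloor_\tau}^{t}E_N(t-\lfloor s\rfloor_\tau)\,\dd W(s)\big\rangle\dd t$; the paper replaces $\Lambda^{M,N}(t)$ by $\Lambda^{M,N}(\lfloor t\rfloor_\tau)$ (this pairing vanishes by $\mathcal F_{\lfloor t\rfloor_\tau}$-adaptedness), and then writes $\Lambda^{M,N}(t)-\Lambda^{M,N}(\lfloor t\rfloor_\tau)$ explicitly from its ODE, so that only the deterministic drift survives and is estimated via Corollary~\ref{cor:semi-regularity} and Lemma~\ref{lem:regularity-O_T^M,N}. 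Your ``expand the square'' idea belongs to a mild-solution framework and does not mesh with the energy identity you invoked; moreover, $\mathcal O_s^N-\mathcal O_{\lfloor s\rfloor_\tau}^N$ is not purely a forward stochastic increment (it also contains an $\mathcal F_{\lfloor s\rfloor_\tau}$-measurable semigroup-smoothing of the past), so the off-diagonal cross terms would not all vanish as you claim.
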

\begin{proof}
Due to Theorem \ref{The;weak covergence}, it suffices to handle the term
$\E\big[\Phi(X^N(T))\big]
-\E \big[\Phi(X_T^{M,N})\big]$,
which can be separated as follows,
\begin{equation}
\label{full-weak-separation}
\begin{split}
\E\big[\Phi(X^N(T))\big]
-\E \big[\Phi(X_T^{M,N})\big]
&=
\big(
\E\big[\Phi(\bar{X}^N(T)+\mathcal{O}_T^N)\big]
-\E \big[\Phi(\bar{X}^N(T)+\mathcal{O}_T^{M,N})\big]
\big)
\\
& \quad +
\big(
\E\big[\Phi(\bar{X}^N(T)+\mathcal{O}_T^{M,N})\big]
-\E \big[\Phi(\bar{X}_T^{M,N}+\mathcal{O}_T^{M,N})\big]
\big)
\\
&=:K_1+K_2.
\end{split}
\end{equation}
We  bound $K_1$ by the second-order Taylor expansion and Malliavin integration by parts formula,
\begin{align}\label{full-I1}
\begin{split}
|K_1|
&=
\Big|
\E \Big[ \Phi^{'} (X^N(T))(\mathcal{O}^{M,N}_T-\mathcal{O}^N_T)
\\&  \quad +
\int_0^1 \Phi^{''}(X^N(T)+\lambda(\mathcal{O}^{M,N}_T-\mathcal{O}^N_T))
(\mathcal{O}^{M,N}_T-\mathcal{O}^N_T,\mathcal{O}^{M,N}_T-\mathcal{O}^N_T)
(1-\lambda)\dd \lambda
\Big]
\Big|
\\
&\leq C \Big|\E \int_0^T \left<E_N(T-s)-E_N(T-\lfloor s \rfloor_{\tau}),\mathcal{D}_s \Phi^{'} (X^N(T)) \right>_{\mathcal{L}_2^0}\dd s\Big|
+C\,\E \big[\| \mathcal{O}^{M,N}_T-\mathcal{O}^N_T \|^2\big].
\end{split}
\end{align}
For the second term, we derive from \eqref{eq:O} that
\begin{align}\label{full-I12}
\E \big[\| \mathcal{O}^{M,N}_T-\mathcal{O}^N_T \|^2\big]
 \leq C\, \tau^{\gamma-\epsilon}.
\end{align}
Employing \eqref{regularity of semigroup}, \eqref{eq:A-Q-condition}, Proposition \ref{Estimate of Malliavin derivative of the solution} and  the chain rule enables us to obtain
\begin{align}\label{full-I11}
\begin{split}
\E \int_0^T &\left<E_N(T-s)-E_N(T-\lfloor s \rfloor_{\tau}),
\mathcal{D}_s \Phi^{'} (X^N(T))
\right>_{\mathcal{L}_2^0}\dd s
\\
&\leq \E\int_0^T
\|E_N(T-s)-E_N(T-\lfloor s \rfloor_{\tau})\|_{\mathcal{L}_2^0} 
 \|\Phi^{''} (X^N(T)) \mathcal{D}_s X^N(T) \|_{\mathcal{L}_2^0}\dd s
\\&\leq C \int_0^T \|E_N(T-s)\big(I-E_N(s-\lfloor s \rfloor_{\tau})\big)\|_{\mathcal{L}_2^0}\cdot (T-s)^{\tfrac{\gamma-1}{2}} \dd s
\\&\leq  C\, \tau^{\gamma-\epsilon}\int_0^T (T-s)^{-1+\epsilon} \dd s \leq C\, \tau^{\gamma-\epsilon}.
\end{split}
\end{align}
Now it remains to bound $K_2$, which can be done by estimating $\|\bar{X}_T^{M,N}-\bar{X}^N(T) \|_{L^2(\Omega,H)} $.
To this end, we define
$\Lambda^{M,N}(t):=\bar{X}^N(t)-\bar{X}_t^{M,N}$, which is  differentiable with respect to $t$ and
\begin{align}
\begin{split}
\tfrac{\dd}{\dd t} \Lambda^{M,N}(t)
=-A_N \Lambda^{M,N}(t)-
P_N \Big[ \tfrac{E(t-\lfloor t \rfloor_{\tau})F(X_{\lfloor t \rfloor_{\tau}}^{M,N})}{1+\tau \|P_N F(X_{\lfloor t \rfloor_{\tau}}^{M,N})\|}
-F(\bar{X}^N(t)+\mathcal{O}^N_t)\Big].
\end{split}
\end{align}
Using the Newton-Leibniz formula and then integrating over [0,T] promise
\begin{align}
\begin{split}
\frac12\|\Lambda^{M,N}(T)\|^2&=\int_0^T
\langle \Lambda^{M,N}(t),-A_N \Lambda^{M,N}(t) \rangle \dd t
\\& \quad
+\int_0^T
\Big\langle -\Lambda^{M,N}(t), \tfrac{E(t-\lfloor t \rfloor_{\tau})
F(X_{\lfloor t \rfloor_{\tau}}^{M,N})}
{1+\tau \|P_N F(X_{\lfloor t \rfloor_{\tau}}^{M,N})\|}
-E(t-\lfloor t \rfloor_{\tau})F(X_{\lfloor t \rfloor_{\tau}}^{M,N})
\Big\rangle \dd t
\\& \quad+\int_0^T
\big\langle -\Lambda^{M,N}(t), \big(E(t-\lfloor t \rfloor_{\tau})-I\big)
F(X_{\lfloor t \rfloor_{\tau}}^{M,N})\big\rangle \dd t
\\& \quad
+\int_0^T
\big\langle -\Lambda^{M,N}(t), F(X_{\lfloor t \rfloor_{\tau}}^{M,N})
-F(X_t^{M,N}) \big\rangle \dd t
\\&
\quad+
\int_0^T
\big\langle -\Lambda^{M,N}(t),F(X_t^{M,N})
-F( \bar{X}^N(t)+\mathcal{O}^{M,N}_t)\big\rangle \dd t
\\&
\quad+
\int_0^T
\big\langle -\Lambda^{M,N}(t),
F( \bar{X}^N(t)+\mathcal{O}^{M,N}_t)
-F( \bar{X}^N(t)+\mathcal{O}^N_t)\big\rangle \dd t
\\&=:K_{21}+K_{22}+K_{23}+K_{24}+K_{25}+K_{26}.
\end{split}
\end{align}
Based on the fact that $A$ is self-adjoint, we have
\begin{align}
\E [K_{21}]=\E\int_0^T
\langle \Lambda^{M,N}(t),-A \Lambda^{M,N}(t) \rangle \dd t
=-\E\int_0^T \big\|\Lambda^{M,N}(t)\big\|_1^2
\dd t.
\end{align}
Applying Theorem \ref{full;a priori estimate} and Young's inequality yields that
\begin{align}
\begin{split}
\E [K_{22}]&
=\E\int_0^T\Big
\langle \Lambda^{M,N}(t), \tau \|P_N F(X_{\lfloor t \rfloor_{\tau}}^{M,N})\|\tfrac{E(t-\lfloor t \rfloor_{\tau})F(X_{\lfloor t \rfloor_{\tau}}^{M,N})}{1+\tau \|P_N F(X_{\lfloor t \rfloor_{\tau}}^{M,N})\|} \Big\rangle \dd t
\\&
\leq C \, \E\int_0^T
\Big(
\|\Lambda^{M,N}(t)\|^2+
\tau^2 \|E(t-\lfloor t \rfloor_{\tau})\|_{\mathcal{L}(H)}^2
\|F(X_{\lfloor t \rfloor_{\tau}}^{M,N})\|^4
\Big)
\dd t
\\&\leq C \, \E\int_0^T \|\Lambda^{M,N}(t)\|^2 \dd t+C\, \tau^2.
\end{split}
\end{align}
To bound $K_{23}$, we employ
 the weighted Young inequality
$ab \leq \tfrac14 a^2 +  b^2$,
Lemma \ref{lemma;F1}, Corollary \ref{X_t;1}, the regularity of $X_t^{M,N}$,
\begin{align}
\begin{split}
\E[K_{23}]&
\leq \frac14 \, \E\int_0^T
\big\|\Lambda^{M,N}(t)\big\|_1^2 \dd t
+
\E
\int_0^{t_1}
\big\|
A^{-\tfrac{1}{2}}
(E(t-\lfloor t \rfloor_{\tau})-I)
F(X_{\lfloor t \rfloor_{\tau}}^{M,N})
\big\|^2
\dd t
\\&
\qquad \qquad
+
\E
\int_{t_1}^T
\big\|A^{-1}
(E(t-\lfloor t \rfloor_{\tau})-I)
A^{\tfrac{1}{2}}
F(X_{\lfloor t \rfloor_{\tau}}^{M,N})
\big\|^2
\dd t
\\&
\leq
\frac14\, \E\int_0^T
\big\|\Lambda^{M,N}(t)\big\|_1^2 \dd t
+
C\,\tau^2
+
C\,\tau^2 \,
\E\int_{t_1}^T \Big[\big(1+\big\|X_{\lfloor t \rfloor_{\tau}}^{M,N}\big\|_V^4 \big)\,
\big\|X_{\lfloor t \rfloor_{\tau}}^{M,N}\big\|_1^2
\Big]
\dd t
\\&\leq
\frac14\, \E\int_0^T
\big\|\Lambda^{M,N}(t)\big\|_1^2 \dd t
+ C\, \tau ^2 \big( 1+ \tau^{-1+\gamma-\epsilon} \big)
\\&
\leq \frac14 \, \E\int_0^T
\big\|\Lambda^{M,N}(t)\big\|_1^2 \dd t
+ C\,\tau^{1+\gamma-\epsilon}.
\end{split}
\end{align}
Here we separate the term
$
\E
\int_0^{T}
\big\|
A^{-\tfrac{1}{2}}
(E(t-\lfloor t \rfloor_{\tau})-I)
F(X_{\lfloor t \rfloor_{\tau}}^{M,N})
\big\|^2
\dd t$
into two parts
since Corollary \ref{X_t;1} is only valid for
$X_{t_m}^{M,N}$, $m \in \{1,2,\ldots,M\}$
and
we only assume
$\big\|X_{0}^{M,N}\big\|_{\gamma} < \infty$.
We skip the estimate of $\E[K_{24}]$ and leave it later.
By the monotonicity condition \eqref{eq:F-one-sided-condition} we obtain the estimate of $\E[K_{25}]$ as follows,
\begin{align}
\begin{split}
\E[K_{25}]&=
\E\!
\int_0^T\!
\big
\langle -\Lambda^{M,N}(t),F(\bar{X}_t^{M,N}+\mathcal{O}^{M,N}_t)-
F( \bar{X}^N(t)+\mathcal{O}^{M,N}_t)\big\rangle \dd t
\leq
\E\!\int_0^T\!\!\! \|\Lambda^{M,N}(t)\|^2 \dd t.
\end{split}
\end{align}
Thanks to moment bounds and regularity of $\mathcal{O}^{M,N}_t$ and ${X}^N(t)$,
we use the Taylor expansion,
Lemma \ref{lemma;F} with $\eta=1$ and $\theta=\gamma-\epsilon$,
H\"{o}lder's inequality and
\eqref{eq:O} with
$\beta=\gamma-\epsilon$ to show
\begin{align}
\begin{split}
\E[K_{26}]&
\leq \frac14\, \E\int_0^T
\big\|\Lambda^{M,N}(t)\big\|_1^2 \dd t
+ \E \int_0^T \big\|F( \bar{X}^N(t)+\mathcal{O}^{M,N}_t)-F( \bar{X}^N(t)+\mathcal{O}^N_t)\big\|_{-1}^2\dd t
\\& \leq \frac14\, \E\int_0^T
\|\Lambda^{M,N}(t)\big\|_1^2 \dd t
+
\E \int_0^T
\big\|
\int_0^1
F'( X^N(t)+\sigma(\mathcal{O}^{M,N}_t-\mathcal{O}^N_t))
(\mathcal{O}^{M,N}_t-\mathcal{O}^N_t)
\dd \sigma
\big\|_{-1}^2
\dd t
\\& \leq \frac14\, \E\int_0^T
\|\Lambda^{M,N}(t)\big\|_1^2 \dd t
+C\sup_{t\in[0,T]}
\Big(\E
\big[
\big\|
\mathcal{O}^{M,N}_t-\mathcal{O}^N_t
\big\|_{-\gamma+\epsilon}^4
\big]
\Big)
^{\tfrac12}
\\& \leq \frac14\, \E\int_0^T
\big\|\Lambda^{M,N}(t)\big\|_1^2
\dd t+ C\,\tau^{2\gamma-2\epsilon}.
\end{split}
\end{align}
Finally, we turn to the estimate of $K_{24}$, which is the most difficult term in the weak convergence analysis.
With the aid of the Taylor expansion, we have
\begin{equation}\label{eq:K_24-1}
\begin{split}
F(X_{t}^{M,N})-F(X_{\lfloor t \rfloor_{\tau}}^{M,N})
&=
F'(X_{\lfloor t \rfloor_{\tau}}^{M,N})
\big( E(t-\lfloor t \rfloor_{\tau})-I \big)X_{\lfloor t \rfloor_{\tau}}^{M,N}
+
F'(X_{\lfloor t \rfloor_{\tau}}^{M,N})
\int_{\lfloor t \rfloor_{\tau}}^{t}
\tfrac{E(t-\lfloor s \rfloor_{\tau})P_NF(X_{\lfloor s \rfloor_{\tau}}^{M,N})}
{1+\tau\|P_NF(X_{\lfloor s \rfloor_{\tau}}^{M,N})\|} \dd s
\\& \quad+
F'(X_{\lfloor t \rfloor_{\tau}}^{M,N})
\int_{\lfloor t \rfloor_{\tau}}^{t} E_N(t-\lfloor s \rfloor_{\tau}) \dd W(s)
+R_F,
\end{split}
\end{equation}
where
\[
R_F=\int_0^1
F''
\big(
X_{\lfloor t \rfloor_{\tau}}^{M,N}
+\lambda(X_{t}^{M,N}-X_{\lfloor t \rfloor_{\tau}}^{M,N})
\big)
(X_{t}^{M,N}-X_{\lfloor t \rfloor_{\tau}}^{M,N},
X_{t}^{M,N}-X_{\lfloor t \rfloor_{\tau}}^{M,N})
(1-\lambda)
\dd \lambda.
\]
Using this we separate $\E[K_{24}]$ into four parts:
\begin{equation}
\begin{split}
\E [K_{24}]&= \E \int_0^T \big \langle
\Lambda^{M,N}(t) ,
F'(X_{\lfloor t \rfloor_{\tau}}^{M,N})
\big( E(t-\lfloor t \rfloor_{\tau})-I \big)X_{\lfloor t \rfloor_{\tau}}^{M,N}
\big \rangle \dd t
\\& \quad
+\E \int_0^T \Big \langle
\Lambda^{M,N}(t) ,
F'(X_{\lfloor t \rfloor_{\tau}}^{M,N})
\int_{\lfloor t \rfloor_{\tau}}^{t}
\tfrac{E(t-\lfloor s \rfloor_{\tau})P_NF(X_{\lfloor s \rfloor_{\tau}}^{M,N})}
{1+\tau\|P_NF(X_{\lfloor s \rfloor_{\tau}}^{M,N})\|} \dd s
\Big \rangle \dd t
\\& \quad
+\E \int_0^T \Big \langle
\Lambda^{M,N}(t) ,
F'(X_{\lfloor t \rfloor_{\tau}}^{M,N})
\int_{\lfloor t \rfloor_{\tau}}^{t} E_N(t-\lfloor s \rfloor_{\tau}) \dd W(s)
\Big \rangle \dd t
\\& \quad
+\E \int_0^T \big \langle
\Lambda^{M,N}(t) ,
R_F
\big \rangle \dd t
\\&
=:K_{241}+K_{242}+K_{243}+K_{244}.
\end{split}
\end{equation}
For the first term $K_{241}$, using
the weighted Young inequality
$ab \leq \tfrac14 a^2 +  b^2$
,
Lemma \ref{lemma;F}
with $\eta=1$ and $\theta=\gamma-\epsilon$,
 \eqref{regularity of semigroup}
and
Corollary \ref{X_t;gamma bound},
 we deduce
\begin{equation}
\begin{split}
K_{241}&=\E \int_0^T \big \langle
\Lambda^{M,N}(t) ,
F'(X_{\lfloor t \rfloor_{\tau}}^{M,N})
\big( E(t-\lfloor t \rfloor_{\tau})-I \big)X_{\lfloor t \rfloor_{\tau}}^{M,N}
\big \rangle \dd t
\\&
=\E \int_0^T \big \langle
A^{\tfrac12}
\Lambda^{M,N}(t) ,
A^{-\tfrac12}
F'(X_{\lfloor t \rfloor_{\tau}}^{M,N})
\big( E(t-\lfloor t \rfloor_{\tau})-I \big)
X_{\lfloor t \rfloor_{\tau}}^{M,N}
\big \rangle \dd t
\\&
\leq
\frac14\, \E\int_0^T
\big\|\Lambda^{M,N}(t)\big\|_1^2
\dd t
+
\,
\E \int_0^T
\big\|
F'(X_{\lfloor t \rfloor_{\tau}}^{M,N})
\big( E(t-\lfloor t \rfloor_{\tau})-I \big)
X_{\lfloor t \rfloor_{\tau}}^{M,N}
\big\|_{-1}^2
\dd t
\\&
\leq
\frac14\, \E\int_0^T
\big\|\Lambda^{M,N}(t)\big\|_1^2
\dd t
\\&
\qquad
+
C \,
\E \int_0^T
\big(
1+
\text{max}
\big\{
\|(X_{\lfloor t \rfloor_{\tau}}^{M,N})\|_V,
\|(X_{\lfloor t \rfloor_{\tau}}^{M,N})\|_{\gamma-\epsilon}
\big\}^4
\big)
\big\|
\big( E(t-\lfloor t \rfloor_{\tau})-I \big)
X_{\lfloor t \rfloor_{\tau}}^{M,N}
\big\|_{-\gamma+\epsilon}^2
\dd t
\\&
\leq
\frac14\, \E\int_0^T
\big\|\Lambda^{M,N}(t)\big\|_1^2
\dd t
\\&
\qquad
+
C \,
\E \int_0^T
\big(
1+
\text{max}
\big\{
\|(X_{\lfloor t \rfloor_{\tau}}^{M,N})\|_V,
\|(X_{\lfloor t \rfloor_{\tau}}^{M,N})\|_{\gamma}
\big\}^4
\big)
\big\|
A^{-\tfrac{\gamma-\epsilon}{2}}
\big( E(t-\lfloor t \rfloor_{\tau})-I \big)
A^{-\tfrac{\gamma}{2}}
A^{\tfrac{\gamma}{2}}
X_{\lfloor t \rfloor_{\tau}}^{M,N}
\big\|^2
\dd t
\\&
\leq
\frac14\, \E\int_0^T
\big\|\Lambda^{M,N}(t)\big\|_1^2
\dd t
\\&
\qquad
+
C \,
\E \int_0^T
\big(
1+
\text{max}
\big\{
\|(X_{\lfloor t \rfloor_{\tau}}^{M,N})\|_V,
\|(X_{\lfloor t \rfloor_{\tau}}^{M,N})\|_{\gamma}
\big\}^4
\big)
\cdot
\tau^{2\gamma-\epsilon}
\cdot
\big\|
X_{\lfloor t \rfloor_{\tau}}^{M,N}
\big\|_{\gamma}^2
\dd t
\\&
\leq
\frac14\, \E\int_0^T
\big\|\Lambda^{M,N}(t)\big\|_1^2
\dd t
+
C \,
\tau^{2\gamma-\epsilon}.
\end{split}
\end{equation}
Utilizing the Cauchy-Schwarz inequality and moment bounds of the numerical solutions yields
\begin{equation}
\begin{split}
K_{242} &\leq
C\, \E\int_0^T \|\Lambda^{M,N}(t)\|^2 \dd t + C\,\tau^2.
\end{split}
\end{equation}
We are now in the position to estimate $K_{243}$.
Since $\Lambda^{M,N}(\lfloor t \rfloor_{\tau})$
is $\mathcal{F}_{\lfloor t \rfloor_{\tau}}$-measurable, one can easily check
\begin{align*}
\E \int_0^T
&
\Big \langle
\Lambda^{M,N}(\lfloor t \rfloor_{\tau}),
F'(X_{\lfloor t \rfloor_{\tau}}^{M,N})
\int_{\lfloor t \rfloor_{\tau}}^{t} E_N(t-\lfloor s \rfloor_{\tau}) \dd W(s)
\Big \rangle \dd t
\\&=
\int_0^T
\E \int_{\lfloor t \rfloor_{\tau}}^{t}
\Big \langle
\Lambda^{M,N}(\lfloor t \rfloor_{\tau}),
F'(X_{\lfloor t \rfloor_{\tau}}^{M,N})
 E_N(t-\lfloor s \rfloor_{\tau}) \dd W(s)
\Big \rangle \dd t=0.
\end{align*}
As a result,
\begin{equation}\label{eq:K243}
\begin{split}
K_{243}&=\E \int_0^T \Big \langle
\Lambda^{M,N}(t) ,
F'(X_{\lfloor t \rfloor_{\tau}}^{M,N})
\int_{\lfloor t \rfloor_{\tau}}^{t} E_N(t-\lfloor s \rfloor_{\tau}) \dd W(s)
\Big \rangle \dd t
\\
&=
\E \int_0^T \Big
\langle \Lambda^{M,N}(t)-\Lambda^{M,N}(\lfloor t \rfloor_{\tau}),
F'(X_{\lfloor t \rfloor_{\tau}}^{M,N})
\int_{\lfloor t \rfloor_{\tau}}^{t} E_N(t-\lfloor s \rfloor_{\tau}) \dd W(s)
\Big \rangle \dd t,
\end{split}
\end{equation}
where
\begin{equation}\label{eq:taylor}
\begin{split}
\Lambda^{M,N}(t)-\Lambda^{M,N}(\lfloor t \rfloor_{\tau})
&=
\big(
E_N(t-\lfloor t \rfloor_{\tau})-I
\big)
\Lambda^{M,N}(\lfloor t \rfloor_{\tau})
\\&\quad
-
\int_{\lfloor t \rfloor_{\tau}}^{t}
\Big[
E(t-\lfloor s \rfloor_{\tau})
\tfrac{P_N F(X^{M,N}_{\lfloor s \rfloor_{\tau}})}
{1+\tau \|P_N F(X^{M,N}_{\lfloor s \rfloor_{\tau}})\|}
-E_N(t-s)F(X^N(s))
\Big] \dd s.
\end{split}
\end{equation}
Similarly as above, by noting that $\Lambda^{M,N}(\lfloor t \rfloor_{\tau})$
,
$X^{M,N}_{\lfloor t \rfloor_{\tau}}$
and
$X^N(\lfloor t \rfloor_{\tau})$
are $\mathcal{F}_{\lfloor t \rfloor_{\tau}}$-meansurable,
one can easily learn that
\begin{align}\label{eq:adapted1}
\begin{split}
\E \int_0^T
\Big \langle
\big(E_N(t-\lfloor t \rfloor_{\tau})-I
\big)
\Lambda^{M,N}(\lfloor t \rfloor_{\tau})
-
\int_{\lfloor t \rfloor_{\tau}}^{t}
\Big[&
E(t-\lfloor s \rfloor_{\tau})
\tfrac{P_N F(X^{M,N}_{\lfloor s \rfloor_{\tau}})}
{1+\tau \|P_N F(X^{M,N}_{\lfloor s \rfloor_{\tau}})\|}
\Big] \dd s
,
\\&
F'(X_{\lfloor t \rfloor_{\tau}}^{M,N})
\int_{\lfloor t \rfloor_{\tau}}^{t} E_N(t-\lfloor s \rfloor_{\tau}) \dd W(s)
\Big \rangle \dd t=0,
\end{split}
\end{align}
and
\begin{align}\label{eq:adapted2}
\E\!\! \int_0^T\!\!
\Big \langle \int_{\lfloor t \rfloor_{\tau}}^{t}\!\!\!
E_N(t-s)
F(X^N(\lfloor t \rfloor_{\tau}))
\dd s
,
F'(X_{\lfloor t \rfloor_{\tau}}^{M,N})
\int_{\lfloor t \rfloor_{\tau}}^{t}\!\!\! E_N(t-\lfloor s \rfloor_{\tau}) \dd W(s)
\Big \rangle \dd t
=0.
\end{align}
Inserting \eqref{eq:taylor} into \eqref{eq:K243} and using
\eqref{regularity of semigroup},
\eqref{eq:F'-condition},
\eqref{eq:F-Spatial-Holder} in Corollary \ref{cor:semi-regularity}, \eqref{eq:full-bound} in Corollary \ref{X_t;gamma bound},
Lemma \ref{lem:regularity-O_T^M,N} with $\alpha=0$ as well as \eqref{eq:adapted1},
\eqref{eq:adapted2},
we arrive at
\begin{equation}
\begin{split}
K_{243}&=\E\!\! \int_0^T\!\!
\Big \langle \int_{\lfloor t \rfloor_{\tau}}^{t}\!\!\!
E_N(t-s)F(X^N(s)) \dd s
,
F'(X_{\lfloor t \rfloor_{\tau}}^{M,N})
\int_{\lfloor t \rfloor_{\tau}}^{t}\!\!\! E_N(t-\lfloor s \rfloor_{\tau}) \dd W(s)
\Big \rangle \dd t
\\&=
\E\!\! \int_0^T\!\!
\Big \langle \int_{\lfloor t \rfloor_{\tau}}^{t}\!\!\!
E_N(t-s)
\big[
F(X^N(s))-F(X^N(\lfloor t \rfloor_{\tau}))
\big] \dd s
,
F'(X_{\lfloor t \rfloor_{\tau}}^{M,N})
\int_{\lfloor t \rfloor_{\tau}}^{t}\!\!\! E_N(t-\lfloor s \rfloor_{\tau}) \dd W(s)
\Big \rangle \dd t
\\&
\leq
\int_0^T\!\!
\Big(
\int_{\lfloor t \rfloor_{\tau}}^{t}\!\!\!
\big\|
F(X^N(s))-F(X^N(\lfloor t \rfloor_{\tau}))
\big\|_{L^2(\Omega,H)}
\dd s
\Big)
\\&\qquad\qquad\qquad\qquad\qquad\times
\Big\|
F'(X_{\lfloor t \rfloor_{\tau}}^{M,N})
\int_{\lfloor t \rfloor_{\tau}}^{t}\!\!\! E_N(t-\lfloor s \rfloor_{\tau}) \dd W(s)
\Big\|_{L^2(\Omega,H)}
\dd t
\\&
\leq
C\, \tau \cdot \tau^{\tfrac{\gamma}{2}}
\int_0^T
\Big(
1  +
\big\|
X_{\lfloor t \rfloor_{\tau}}^{M,N}
\big\|_{L^8(\Omega,V)}^2
\Big)
\Big\|
\int_{\lfloor t \rfloor_{\tau}}^{t}\!\!\! E_N(t-\lfloor s \rfloor_{\tau}) \dd W(s)
\Big\|_{L^4(\Omega,H)}
\dd t
\\&\leq
C\, \tau^{1+\gamma}.
\end{split}
\end{equation}
Finally, by the weighted Young inequality
$ab \leq \tfrac14 a^2 +  b^2$, \eqref{eq:F''},
 H\"{o}lder's inequality and Corollary \ref{X_t;gamma bound} we can easily get
\begin{equation}
\begin{split}
K_{244}
&
=\E \int_0^T \big \langle
\Lambda^{M,N}(t) ,
R_F
\big \rangle \dd t
=\E \int_0^T \big \langle
A^{\tfrac12} \Lambda^{M,N}(t) ,
A^{-\tfrac12} R_F
\big \rangle \dd t
\\&
\leq \frac 14 \,\E \int_0^T
\|\Lambda^{M,N}(t)\|_1^2 \dd t
\\&
\qquad
+
\E
\int_0^T
\int_0^1
\Big\|
F''
\big(
X_{\lfloor t \rfloor_{\tau}}^{M,N}
+\lambda(X_{t}^{M,N}-X_{\lfloor t \rfloor_{\tau}}^{M,N})
\big)
(X_{t}^{M,N}-X_{\lfloor t \rfloor_{\tau}}^{M,N},
X_{t}^{M,N}-X_{\lfloor t \rfloor_{\tau}}^{M,N})
\Big\|_{-1}^2
\dd \lambda
\dd t
\\&
\leq \frac 14 \,\E \int_0^T
\|\Lambda^{M,N}(t)\|_1^2 \dd t
+
C
\sup_{t \in [0,T]}
\Big\|
X_{t}^{M,N}
\Big\|_{L^4(\Omega,V)}^2
\int_0^T
\Big\|
X_{t}^{M,N}-X_{\lfloor t \rfloor_{\tau}}^{M,N}
\Big\|_{L^8(\Omega,H)}^4
\dd t
\\&
\leq \frac 14 \,\E \int_0^T
\|\Lambda^{M,N}(t)\|_1^2\dd t
+C\,\tau^{2\gamma}.
\end{split}
\end{equation}
Therefore,
\begin{equation}\label{eq:K_24-2}
\E [K_{24}] \leq
C\, \E\int_0^T \|\Lambda^{M,N}(t)\|^2 \dd t
+
\frac12\, \E\int_0^T
\big\|\Lambda^{M,N}(t)\big\|_1^2
\dd t
+
C \, \tau^{2\gamma-\epsilon}.
\end{equation}
Putting together estimates of $K_{21}$, $K_{22}$,  $K_{23}$, $K_{24}$, $K_{25}$ and $K_{26}$, and using Gronwall's inequality, one can obtain that
\begin{equation}\label{full-e2}
\E\big[\|\Lambda^{M,N}(T)\|^2\big]
\leq C\, \tau^{2\gamma-2\epsilon}.
\end{equation}
This completes the proof.
\end{proof}
Also, we get the strong convergence rate of full discretization as follows.
\begin{corollary}\label{cor:strong-full}
Under Assumptions \ref{ass:A-condition}-\ref{ass:X0} and \ref{ass:initial-value2},
there exists a generic constant $C$ such that for any $M, N \in \N$,
\begin{equation}
\sup_{t \in [0,T]}
\|X(t)-X^{M,N}_t\|_{L^2(\Omega,H)}
\leq
C \big(
\lambda_N^{-\tfrac{\gamma}{2}}
+ \tau^{\tfrac{\gamma}{2}}
\big).
\end{equation}
\end{corollary}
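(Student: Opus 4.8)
The plan is to estimate the strong error by the triangle inequality, first detaching the (already settled) spatial error from the temporal error, and then splitting the temporal error into the difference of the ``drift parts'' and the difference of the discrete stochastic convolutions. Since $X^N(t)-X_t^{M,N}=-\Lambda^{M,N}(t)-(\mathcal{O}_t^{M,N}-\mathcal{O}_t^N)$, I would start from
\[
\|X(t)-X_t^{M,N}\|_{L^2(\Omega,H)}
\leq
\|X(t)-X^N(t)\|_{L^2(\Omega,H)}
+
\|\Lambda^{M,N}(t)\|_{L^2(\Omega,H)}
+
\|\mathcal{O}_t^N-\mathcal{O}_t^{M,N}\|_{L^2(\Omega,H)} .
\]
The first term is bounded by $C\lambda_N^{-\gamma/2}$ via Corollary \ref{cor:strong-spatial}. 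For the second term, the Gronwall argument leading to \eqref{full-e2} in fact controls $\E\big[\|\Lambda^{M,N}(t)\|^2\big]$ uniformly for $t\in[0,T]$, so that $\sup_{t\in[0,T]}\|\Lambda^{M,N}(t)\|_{L^2(\Omega,H)}\leq C\tau^{\gamma-\epsilon/2}\leq C\tau^{\gamma/2}$ once $\epsilon$ is taken smaller than $\gamma$ (recall $\tau$ is bounded); in particular this term does not affect the claimed rate.

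The remaining term is the delicate one, since here the sharp exponent $\tfrac{\gamma}{2}$ is required with no $\epsilon$-loss, whereas the operator-norm estimate behind Lemma \ref{lem;technical estimate} --- which peels off a factor $A^{(1-\epsilon)/2}$ of the semigroup in order to keep $\int_0^t(t-r)^{-1+\epsilon}\,\dd r$ finite --- only delivers $\tau^{(\gamma-\epsilon)/2}$. Instead I would argue spectrally: writing $\mathcal{O}_t^N-\mathcal{O}_t^{M,N}=\int_0^t E_N(t-r)\big(I-E_N(r-\lfloor r\rfloor_{\tau})\big)P_N\,\dd W(r)$, It\^o's isometry together with the expansion of the Hilbert--Schmidt norm in the eigenbasis $\{e_j\}$ of $A$ (using the self-adjointness of $Q^{\nicefrac12}$ and Parseval's identity) gives
\[
\E\big[\|\mathcal{O}_t^N-\mathcal{O}_t^{M,N}\|^2\big]
=
\sum_{j\leq N}\|Q^{\nicefrac12}e_j\|^2\int_0^t e^{-2\lambda_j(t-r)}\big(1-e^{-\lambda_j(r-\lfloor r\rfloor_{\tau})}\big)^2\,\dd r .
\]
The elementary bounds $\big(1-e^{-\lambda_j(r-\lfloor r\rfloor_{\tau})}\big)^2\leq(\lambda_j\tau)^{\gamma}$ (valid for $\gamma\in(0,1]$, since $r-\lfloor r\rfloor_{\tau}\leq\tau$) and $\int_0^t e^{-2\lambda_j(t-r)}\,\dd r\leq(2\lambda_j)^{-1}$ then bound the right-hand side by $\tfrac12\tau^{\gamma}\sum_{j\in\N}\lambda_j^{\gamma-1}\|Q^{\nicefrac12}e_j\|^2=\tfrac12\tau^{\gamma}\,\|A^{(\gamma-1)/2}Q^{\nicefrac12}\|_{\mathcal{L}_2(H)}^2$, which is finite by Assumption \ref{assum:eq-noise}. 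Note that this route does not even invoke the commutativity of $A$ and $Q$. Hence $\|\mathcal{O}_t^N-\mathcal{O}_t^{M,N}\|_{L^2(\Omega,H)}\leq C\tau^{\gamma/2}$ uniformly in $t$, and combining the three estimates yields the assertion.

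The only genuine obstacle is this last $\epsilon$-free estimate of the discrete stochastic convolution; the spatial part and the $\Lambda^{M,N}$ part follow immediately from Corollary \ref{cor:strong-spatial} and \eqref{full-e2}. If one is content with the essentially equivalent bound $C\big(\lambda_N^{-\gamma/2}+\tau^{\gamma/2-\epsilon}\big)$, one may simply invoke Lemma \ref{lem;technical estimate} with $\beta=0$ in place of the spectral computation.
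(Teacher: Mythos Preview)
Your approach is correct and is essentially the one the paper has in mind: the paper does not write out a proof for this corollary, but its intended argument mirrors the proof of Corollary~\ref{cor:strong-spatial}, namely the triangle inequality splitting into the spatial error (Corollary~\ref{cor:strong-spatial}), the drift discrepancy $\Lambda^{M,N}$ controlled by \eqref{full-e2}, and the difference of discrete stochastic convolutions. Your explicit spectral computation for $\|\mathcal{O}_t^N-\mathcal{O}_t^{M,N}\|_{L^2(\Omega,H)}$ is the right way to remove the $\epsilon$-loss that the operator-norm route in Lemma~\ref{lem;technical estimate} incurs; the paper alludes to this Parseval-type argument when stating Lemma~\ref{lem:regularity-O_T^M,N}, but does not spell it out for the difference. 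Your observation that the adjoint trick $\|TQ^{1/2}\|_{\mathcal{L}_2}=\|Q^{1/2}T\|_{\mathcal{L}_2}$ (with $T$ a self-adjoint function of $A_N$) makes the computation go through without assuming that $A$ and $Q$ commute is a nice bonus that the paper does not remark on.
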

\section{Numerical experiments}\label{sec;Numerical experiments}
We perform some numerical experiments in this section to confirm our theoretical conclusion.
To this end, we  focus on a stochastic Allen-Cahn equation driven by additive $Q$-Wiener process in one space dimension,
\begin{align}\label{Numerical example}
\begin{split}
\left\{\begin{array}{ll}
\tfrac{\partial v}{\partial t}(t,x)
=\tfrac{\partial^2 v}{\partial x^2}(t,x) -v^3(t,x) +v(t,x) +\dot{W}(t,x) ,& (t,x)\in (0, 1]\times (0,1),
\\
v(0,x)=\sin(\pi x),&x\in (0,1),
\\
v(t,0)=v(t,1)=0,&t\in(0, 1].
\end{array}\right.
\end{split}
\end{align}
Here $\{W(t)\}_{t\in[0,T]}$ is a cylindrical $Q$-Wiener process.
In the space-time white noise case (i.e., $Q=I$),
the condition \eqref{eq:A-Q-condition}
holds for
$\gamma \in (0,\nicefrac12)$.
For the trace-class noise case,
we choose $Q$ such that
\begin{equation}\label{Q}
Q e_1=0,
\qquad
Q e_i = \frac{1}{i~ \text{log}(i)^2} e_i
\quad
\forall i \geq 2.
\end{equation}
Obviously,
\eqref{Q} guarantees
$\text{Tr}(Q)<\infty$
 and thus the condition \eqref{eq:A-Q-condition}
is satisfied  with
$\gamma =1$ (see \cite[Example 5.3]{kruse2012optimal}
for details).
As a result, we can easily verify that the assumptions in Sect.~\ref{sec:preliminaries} are fulfilled.
We choose $\Phi(X)=\sin(\|X\|)$ to measure the weak error at the endpoint $T = 1$ and the expectations are approximated by computing averages over
2000
samples.
Since the exact solution is not available, we identify the `exact' solution by  a numerical one
using a sufficiently small step-size. 
Particularly, we take $M_{exact}=2^{16};N_{exact} = 2^{8}$ to compute the `exact' solution for the test of spatial discretization
and take $N_{exact} = 1000, M_{exact} = 2^{15}$ for the test of the temporal discretization.

\begin{figure}[!htb]
  \centering
\begin{varwidth}[t]{\textwidth}
  \includegraphics[width=2.3in]{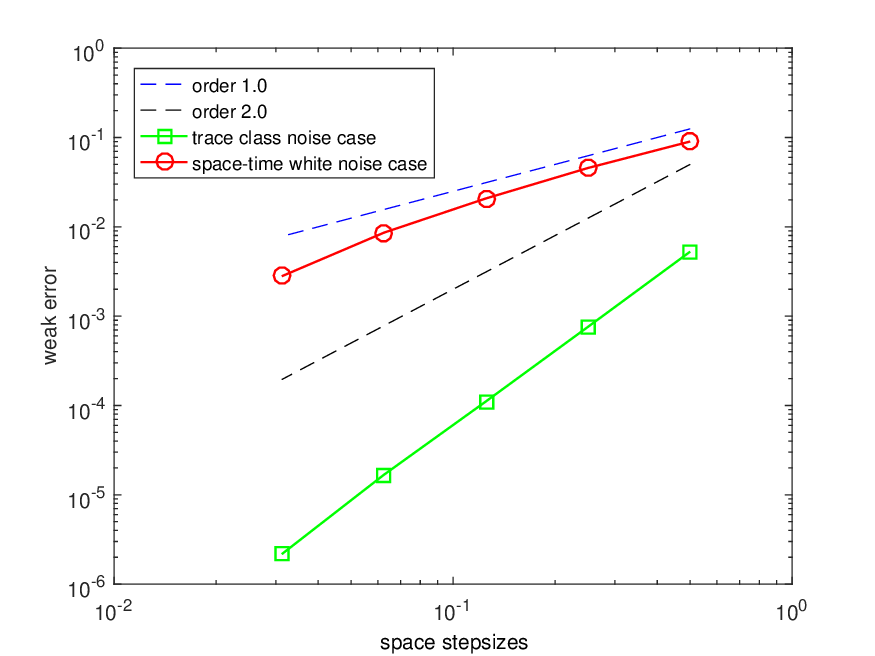}
 \end{varwidth}
 \begin{varwidth}[t]{\textwidth}
  \includegraphics[width=2.3in]{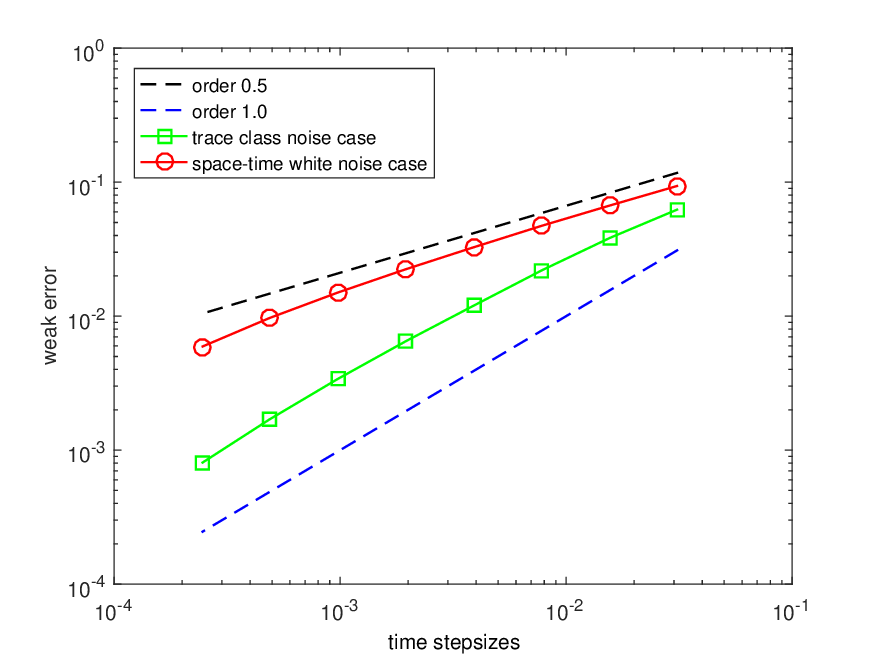}
 \end{varwidth}
  \caption{ Weak convergence rates for full discretization.}
 \label{F1}
\end{figure}

In the left picture of Fig.~\ref{F1}, we depict the weak errors due to the spatial discretization,  against space step-sizes $\tfrac{1}{N}$,
$N=2^i, i=1,2,\ldots,5$, on a log-log scale, together with two reference lines.
Also, the resulting weak errors for the temporal discretization against time step-sizes $ \tau = \tfrac{1}{M}$,
$M =2^i, i=5,6,\ldots,12$, are plotted on a log-log scale  in the right picture of Fig.~\ref{F1}.
One can observe that, the slopes of the error lines and the reference lines match well, which indicates that the weak convergence rates are
almost 1 in space and almost $\nicefrac12$ in time for the space-time white noise case.
Analogously, one can find that the weak convergence order is almost 2 in space and almost 1 in time for the trace-class noise case.
Finally, we mention that the weak errors are computed over the same paths for each stepsize, which ensures
that the errors due to the Monte-Carlo approximations are sufficiently small and can be negligible.
%
%
Actually, in the test of the spatial discretization we take $N=2^{4}$ and compute the standard deviation to be $0.0022$ for
$Q=I$ and $4.1662e-05$ for $\text{Tr}(Q)<\infty$. Likewise, in the test of the temporal discretization we take $\tau=2^{-6}$
and compute the standard deviation to be $0.0248$ for $Q=I$ and $0.0229$ for $\text{Tr}(Q)<\infty$.

\end{document}